	\newcommand*\savesymbol[1]{%
	  \expandafter\let\csname orig#1\expandafter\endcsname\csname#1\endcsname
	  \expandafter\let\csname #1\endcsname\relax
	}
	\newcommand*\restoresymbol[2]{%
	  \expandafter\global\expandafter\let\csname#1#2\expandafter\endcsname%
	    \csname#2\endcsname
	  \expandafter\global\expandafter\let\csname#2\expandafter\endcsname%
	    \csname orig#2\endcsname
	}
\theoremstyle{plain}
	\newtheorem{theorem}{Theorem}[section]
	\newtheorem{corollary}[theorem]{Corollary}
	\newtheorem{lemma}[theorem]{Lemma}
	\newtheorem{tclaim}{Claim}[theorem]
	\newtheorem{tlemma}[tclaim]{Lemma}
\theoremstyle{definition}
	\newtheorem{definition}[theorem]{Definition}
	\newtheorem{example}[theorem]{Example}
\theoremstyle{remark}
	\newtheorem{remark}[theorem]{Remark}
\def\noqed{\renewcommand{\qedsymbol}{}}		% toglie qed a fine prova
\newcommand{\R}{\mathbb R}
\newcommand{\PP}{\mathbb P}
\newcommand{\QQ}{\mathbb Q}
\newcommand{\B}{\mathcal B}
\newcommand{\1}{\bmathbb 1}
\newcommand{\2}{\bmathbb 2}
\newcommand{\pp}{\mathcal P}
\newcommand{\res}{\upharpoonright}										% RESTRIZIONE DI FUNZIONI
\newcommand{\cp}[1]{\left( #1 \right)}
\newcommand{\qp}[1]{\left[ #1 \right]}
\newcommand{\ap}[1]{\langle #1 \rangle}
\newcommand{\bp}[1]{\left\lbrace #1 \right\rbrace}
\newcommand{\vp}[1]{\left\lvert #1 \right\rvert}
\DeclareMathOperator{\cf}{cf}
\DeclareMathOperator{\dom}{dom}
\DeclareMathOperator{\ran}{ran}
\DeclareMathOperator{\val}{val}
\DeclareMathOperator{\trcl}{trcl}
\DeclareMathOperator{\ZFC}{ZFC}
\DeclareMathOperator{\FA}{FA}
\DeclareMathOperator{\BFA}{BFA}
\DeclareMathOperator{\MA}{MA}
\DeclareMathOperator{\BMA}{BMA}
\DeclareMathOperator{\PFA}{PFA}
\DeclareMathOperator{\BPFA}{BPFA}
\DeclareMathOperator{\SPFA}{SPFA}
\DeclareMathOperator{\BSPFA}{BSPFA}
\DeclareMathOperator{\MM}{MM}
\DeclareMathOperator{\BMM}{BMM}
\DeclareMathOperator{\SRP}{SRP}
\DeclareMathOperator{\OCA}{OCA}
\DeclareMathOperator{\PID}{PID}
\DeclareMathOperator{\AD}{AD}
\DeclareMathOperator{\SCH}{SCH}
\DeclareMathOperator{\CF}{CF}
\DeclareMathOperator{\NS}{NS}
\newcommand{\ON}{\mathrm{\mathbf{ON}}}
\begin{document}

	\title{{\sc An introduction to forcing axioms,\\${\SRP}$ and ${\OCA}$}}
	\date{}
	\author{Giorgio Audrito, Gemma Carotenuto\\\small{\it notes based on lectures of Matteo Viale}}
	\maketitle

	\tableofcontents

%%%%%%%%%%%%%%%%%%%%%%%%%%%%%%%%%%%%%%%%%%%%%%%%%%%%%%%%%%%%%%%%%%%%%%%%%%%%%%%%%%%%%%%%%%%%%%%%%%%%%%%%

\section{Introduction}

These notes are extracted from the lectures on forcing axioms and applications held by professor Matteo Viale at the University of Turin in the academic year 2011-2012. Our purpose is to give a brief account on forcing axioms with a special focus on some consequences of them ($\SRP$, $\OCA$, $\PID$). These principles were first isolated by Todor\v cevi\'c in \cite{tod89} and interpolate most consequences of $\MM$ and $\PFA$, thus providing a useful insight on the combinatorial structure of the theory of forcing axioms.

In the first part of this notes we will give a brief account on forcing axioms (section \ref{sec:forcing}), introducing some equivalent definition by means of generalized stationarity (section \ref{sec:stationarity}), and presenting the consequences of them in terms of generic absoluteness.

In the second part (section \ref{sec:srp}) we will state the strong reflection principle ($\SRP$), prove it under $\MM$ and examine its main consequences. This axiom is defined in terms of reflection properties of generalized stationary set as introduced in \ref{sec:stationarity}.

In the third part (section \ref{sec:oca}) we will state the open coloring axiom ($\OCA$), and provide consistency proofs for some versions of it (sections \ref{sec:oca:zfc}, \ref{sec:oca:ad}). This axiom can be seen as a sort of two-dimensional perfect set property, i.e. the basic descriptive set theory result that every analytic set is either countable or it contains a perfect subset.

In the last part (section\ref{sec:gaps}) we will explore a notable application of $\OCA$ to problems concerning properties of the continuum, in particular the existence of certain kind of gaps in $\omega^\omega$.

%%%%%%%%%%%%%%%%%%%%%%%%%%%%%%%%%%%%%%%%%%%%%%%%%%%%%%%%%%%%%%%%%%%%%%%%%%%%%%%%%%%%%%%%%%%%%%%%%%%%%%%%

	\subsection{Notation}

		In this notes, $f[A]$ (resp. $f^{-1}[A]$) will denote the set $f[A] = \{f(x) : ~ x \in A\}$ (resp. with $f^{-1}$). We will use $\qp{X}^\kappa$ (resp. $\qp{X}^{<\kappa}$) to denote the set of all subsets of $X$ of size $\kappa$ (resp. less than $\kappa$).	$M_{\alpha}$ will be the stage $\alpha$ of the cumulative hierarchy in $M$, and $H(\kappa)$ will be the class of all sets hereditarily of cardinality $< \kappa$. We shall write $\phi^M$ to mean the interpretation of $\phi$ in the model $M$.

		If $M$ is a transitive model of $\ZFC$ with $\PP \in M$, $M^\PP$ will be the set of $\PP$-names in $M$, and $M[G]$ will be the forcing extension of $M$ with a filter $G$ that is $M$-generic for some $\PP$. We will use $\dot{A}$ to denote a $\PP$-name for $A \in M[G]$, $\check{A}$ to denote the standard $\PP$-name for $A \in M$, and $\val_G(\dot{A})$ to denote the evaluation of the $\PP$-name $\dot{A}$ with an $M$-generic filter $G$.

		We recall that given a poset $\PP$, a set $D \subseteq \PP$ is \emph{dense} iff for every $p \in \PP$ there exists a $q \in D$, $q \leq p$; and a filter $G$ is \emph{$M$-generic for $\PP$} iff $G \cap D \cap M \neq \emptyset$ for every $D \in M$ dense subset of $\PP$.

%%%%%%%%%%%%%%%%%%%%%%%%%%%%%%%%%%%%%%%%%%%%%%%%%%%%%%%%%%%%%%%%%%%%%%%%%%%%%%%%%%%%%%%%%%%%%%%%%%%%%%%%

%%%%%%%%%%%%%%%%%%%%%%%%%%%%%%%%%%%%%%%%%%%%%%%%%%%%%%%%%%%%%%%%%%%%%%%%%%%%%%%%%%%%%%%%%%%%%%%%%%%%%%%%

	\section{Generalized stationarity} \label{sec:stationarity}

		In this section we shall introduce a generalization of the notion of stationarity for subsets of cardinals to subsets of any set. This concept has been proved useful in many contexts, and is needed in our purpose to state the strong reflection principle $\SRP$. Reference texts for this section are \cite{Jech}, \cite[Chapter 2]{Larson}.

		\begin{definition}
			Let $X$ be an uncountable set. A set $C$ is a \emph{club} on $\pp(X)$ iff there is a function $f_C: ~ X^{<\omega} \rightarrow X$ such that $C$ is the set of elements of $\pp(X)$ closed under $f_C$, i.e.
			\[
			C = \bp{ Y \in \pp(X): ~ f_C[Y]^{<\omega} \subseteq Y }
			\]
			A set $S$ is \emph{stationary} on $\pp(X)$ iff it intersects every club on $\pp(X)$.
		\end{definition}

		\begin{example}
			The set $\bp{X}$ is always stationary since every club contains $X$. Also $\pp(X) \setminus \bp{X}$ and $\qp{X}^\kappa$ are stationary for any $\kappa \leq \vp{X}$ (following the proof of the well-known downwards L\"owhenheim-Skolem Theorem). Notice that every element of a club $C$ must contain $f_C(\emptyset)$, a fixed element of $X$.
		\end{example}

		\begin{remark}
			The reference to the support set $X$ for clubs or stationary sets may be omitted, since every set $S$ can be club or stationary only on $\bigcup S$.
		\end{remark}

		There is one more property of stationary sets that is worth to mention. Given any first-order structure $M$, from the set $M$ we can define a Skolem function $f_M: M^{<\omega} \rightarrow M$ (i.e., a function coding solutions for all existential first-order formulas over $M$). Then the set $C$ of all elementary submodels of $M$ contains a club (the one corresponding to $f_M$). Henceforth, every set $S$ stationary on $X$ must contain an elementary submodel of any first-order structure on $X$.

		\begin{definition}
			A set $S$ is \emph{subset modulo club} of $T$, in symbols $S \subseteq^* T$, iff $\bigcup S = \bigcup T = X$ and there is a club $C$ on $X$ such that $S \cap C \subseteq T \cap C$. Similarly, a set $S$ is \emph{equivalent modulo club} to $T$, in symbols $S =^* T$, iff $S \subseteq^* T \wedge T \subseteq^* S$.
		\end{definition}

		\begin{definition}
			The \emph{club filter} on $X$ is $\CF_X = \bp{C \subset \pp(X): ~ C \text{ contains a club} }$. Similarly, the \emph{non-stationary ideal} on $X$ is $\NS_X = \bp{A \subset \pp(X): ~ A \text{ not stationary}}$.
		\end{definition}

		\begin{remark}
			If $\vp{X} = \vp{Y}$, then $\pp(X)$ and $\pp(Y)$ are isomorphic and so are $CF_X$ and $CF_Y$ (or $NS_X$ and $NS_Y$): then we can suppose $X \in \ON$ or $X \supseteq \omega_1$ if needed.
		\end{remark}

		\begin{lemma}
			$\CF_X$ is a $\sigma$-complete filter on $\pp(X)$, and the stationary sets are exactly the $\CF_X$-positive sets.
		\end{lemma}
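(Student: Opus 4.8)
The plan is to prove the three assertions separately: that $\CF_X$ is a filter, that it is $\sigma$-complete, and that the $\CF_X$-positive sets are precisely the stationary sets. The last of these is essentially unwinding definitions, so the real content is closure of the club filter under countable intersections, which in turn rests on the fact that the intersection of countably many clubs contains a club.

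First I would record the easy closure properties. Upward closure of $\CF_X$ under $\subseteq$ is immediate from the definition. That $\pp(X) \in \CF_X$ follows because $\pp(X)$ itself is the club associated to any constant function $f: X^{<\omega} \to X$, and that $\emptyset \notin \CF_X$ follows because every club is nonempty — indeed $X$ itself is closed under every $f_C$, so $X \in C$ for every club $C$, as already noted in the Example. For $\sigma$-completeness it suffices, by upward closure, to show that if $\{C_n : n \in \omega\}$ are clubs on $X$ then $\bigcap_n C_n$ contains a club. Given witnessing functions $f_{C_n}: X^{<\omega} \to X$, the idea is to amalgamate them into a single function $g: X^{<\omega} \to X$ whose closure points are closed under every $f_{C_n}$ simultaneously.

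The amalgamation is the key step. One clean way: fix a bijection (or just an injection) $\omega \times X^{<\omega} \to X^{<\omega}$, or more simply use that $X^{<\omega}$ can be partitioned into $\omega$ pieces each in bijection with $X^{<\omega}$; then define $g$ so that on the $n$-th piece it mimics $f_{C_n}$. Concretely, for $s \in X^{<\omega}$ write $s = \langle n \rangle \conc t$ when $s$ is nonempty with first entry coding $n$ (after fixing some coding of $\omega$ inside $X$, legitimate since $|X| \geq \omega_1 > \omega$ by the Remark allowing us to assume $X \supseteq \omega_1$), and set $g(\langle n\rangle \conc t) = f_{C_n}(t)$, with $g(\emptyset)$ and other values set arbitrarily. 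If $Y \in \pp(X)$ is closed under $g$ and contains $\omega$ (a club condition we may fold in by intersecting with one more club), then for each $n$ and each $t \in Y^{<\omega}$ we have $\langle n \rangle \conc t \in Y^{<\omega}$, hence $f_{C_n}(t) = g(\langle n\rangle\conc t) \in Y$, so $Y$ is closed under each $f_{C_n}$ and thus $Y \in \bigcap_n C_n$. Therefore the club of points closed under $g$ (intersected with the club of $Y \supseteq \omega$, which itself is handled by the same amalgamation trick) witnesses $\bigcap_n C_n \in \CF_X$.

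Finally, for the characterization of positive sets: by definition $S$ is $\CF_X$-positive iff $S \notin \NS_X^{\mathrm{dual}}$ complement talk — more precisely, $S$ is positive iff $\pp(X) \setminus S \notin \CF_X$, i.e. no club is contained in $\pp(X) \setminus S$, i.e. every club meets $S$, which is exactly the definition of $S$ being stationary. I would also remark that the argument shows $\NS_X$ is a $\sigma$-complete ideal, dual to $\CF_X$. The main obstacle is purely bookkeeping: setting up the coding of $\omega \times X^{<\omega}$ (or the partition of $X^{<\omega}$) inside $X$ so that the amalgamated function $g$ is genuinely a function $X^{<\omega} \to X$ and the closure-point computation goes through cleanly; once that coding is fixed everything else is routine.
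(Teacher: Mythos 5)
Your proposal is correct and follows the same basic strategy as the paper: reduce $\sigma$-completeness to showing that countably many clubs have a club in their intersection, proved by amalgamating the witnessing functions $f_{C_n}$ into a single $g$. The only real difference is the coding, and it is worth comparing. You code the index $n$ into the first \emph{entry} of the tuple, which forces you to assume $\omega \subseteq X$ and to restrict to closure points $Y$ containing the codes, since a $Y$ closed under $g$ but missing the code for $n$ gives no information about closure under $f_{C_n}$. Your suggestion to handle this ``by the same amalgamation trick'' is circular as stated: adding the club $\bp{Y : \omega \subseteq Y}$ as one more member of the family does not help, because extracting closure under \emph{that} function from closure under $g$ again presupposes the codes are already in $Y$. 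You need either to intersect with that club separately (a finite intersection, provable by an interleaving that needs no codes) or to build the enumeration of $\omega$ into $g$ on tuples outside the coded form. The paper's coding sidesteps all of this: it fixes a surjection $\pi: \omega \rightarrow \omega^2$ with $\pi_2(n) \leq n$ and sets $g(s) = f_{\pi_1(\vp{s})}(s \res \pi_2(\vp{s}))$, reading both the index and the argument off the \emph{length} of $s$; any nonempty closure point of $g$ is then closed under every $f_{C_n}$, because $t \in Y^{<\omega}$ can be padded with elements of $Y$ to reach a length $m$ with $\pi(m) = (n, \vp{t})$. No coding of $\omega$ inside $X$ and no auxiliary club is needed, and one gets $C_g = \bigcap_n C_n$ exactly rather than merely a club inside the intersection. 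Your treatment of the remaining points (properness of the filter via $X \in C$ for every club $C$, and the positive-set characterization as definition-unwinding) is fine and in fact more explicit than what the paper writes down.
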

		\begin{proof}
			$\CF_X$ is closed under supersets by definition. Given a family of clubs $C_i$, $i < \omega$, let $f_i$ be the function corresponding to the club $C_i$. Let $\pi: \omega \rightarrow \omega^2$ be a surjection, with components $\pi_1$ and $\pi_2$, such that $\pi_2(n) \leq n$. Define $g: X^{<\omega} \rightarrow X$ to be $g(s) = f_{\pi_1(\vp{s})}(s \res \pi_2(\vp{s}))$. It is easy to verify that $C_g = \bigcap_{i < \omega} C_i$.
		\end{proof}

		\begin{definition}
			Given a family $\bp{S_a \subseteq \pp(X): ~ a \in X}$, the \emph{diagonal union} of the family is $\nabla_{a \in X} S_a = \bp{z \in \pp(X): ~ \exists a \in z ~ z \in S_a}$, and the \emph{diagonal intersection} of the family is $\Delta_{a \in X}  S_a = \bp{z \in \pp(X): \forall a \in z ~ z \in S_a}$.
		\end{definition}

		\begin{lemma}[Fodor] \label{sFodor}
			$\CF_X$ is normal, i.e. is closed under diagonal intersection. Equivalently, every function $f: ~ \pp(X) \rightarrow X$ that is regressive on a $\CF_X$-positive set is constant on a $\CF_X$-positive set.
		\end{lemma}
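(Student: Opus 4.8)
The plan is to establish the diagonal-intersection formulation directly, by coding a whole family of functions into a single Skolem-type function (exactly as in the proof of $\sigma$-completeness above), and then to derive the equivalent pressing-down formulation by two short arguments using only the definition of stationarity.

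\emph{Direct proof of normality.} Let $\bp{C_a : a \in X}$ be clubs on $\pp(X)$ and, for each $a \in X$, let $f_a : X^{<\omega} \to X$ witness that $C_a$ is a club. Define $g : X^{<\omega} \to X$ by $g(\ap{a}\conc t) = f_a(t)$ for $a \in X$ and $t \in X^{<\omega}$, with $g(\emptyset)$ any fixed element of $X$, and let $C_g$ be the club of all $z \in \pp(X)$ closed under $g$. I claim $C_g \subseteq \Delta_{a \in X} C_a$: if $z \in C_g$ and $a \in z$, then for every $t \in z^{<\omega}$ we have $\ap{a}\conc t \in z^{<\omega}$, so $f_a(t) = g(\ap{a}\conc t) \in z$; hence $z$ is closed under $f_a$, i.e. $z \in C_a$. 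Since $a \in z$ was arbitrary, $z \in \Delta_{a\in X}C_a$. Thus $\Delta_{a\in X}C_a$ contains a club and so belongs to $\CF_X$.

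\emph{Equivalence with the pressing-down statement.} For one direction, suppose $f$ is regressive on a stationary set $S$ but constant on no stationary set; then for each $a \in X$ the set $\bp{z : f(z) = a}$ is nonstationary, so we may fix a club $C_a$ disjoint from it. By normality $\Delta_{a\in X}C_a$ contains a club, which $S$ must meet, say at $z$. Then $a := f(z) \in z$ by regressivity, while $z \in \Delta_{a'\in X}C_{a'}$ and $a \in z$ force $z \in C_a$, contradicting $C_a \cap \bp{z : f(z) = a} = \emptyset$. Conversely, suppose $\Delta_{a\in X}C_a$ does not contain a club, so $D = \bp{z \in \pp(X) : \exists a \in z\ (z \notin C_a)}$ is stationary; choose for each $z \in D$ some $a \in z$ with $z \notin C_a$, defining a function $f$ regressive on $D$. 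By hypothesis $f$ is constant, with value $a_0$, on a stationary $E \subseteq D$ (reading the conclusion of the statement as providing a constant value on a positive subset of the domain); but then $E$ is disjoint from the club $C_{a_0}$, which is impossible.

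The only non-routine point is the construction of $g$: making the index $a$ the head of the argument sequence is precisely what forces closure under $g$ to entail closure under each $f_a$ for $a \in z$, matching the definition of $\Delta_{a\in X}C_a$. Everything else --- the pressing-down arguments, the fact that a stationary set meets any set containing a club --- is immediate from the definitions.
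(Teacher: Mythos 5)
Your proof is correct and follows essentially the same route as the paper: normality is obtained by coding the family $\bp{f_a}$ into a single function via $g(\ap{a}\conc t)=f_a(t)$, and the pressing-down statement follows from the same disjoint-clubs contradiction that the paper carries out directly (you merely invoke normality as a black box where the paper inlines the construction, and you additionally supply the converse direction of the equivalence, which the paper only asserts).
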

		\begin{proof}
			Given a family $C_a$, $a \in X$ of clubs, with corresponding functions $f_a$, let $g(a^\smallfrown s) = f_a(s)$. It is easy to verify that $C_g = \Delta_{a \in X} C_a$.

			Even though the second part of our thesis is provably equivalent to the first one for any filter $\mathcal F$, we shall opt here for a direct proof. Assume by contradiction that $f: ~ \pp(X) \rightarrow X$ is regressive (i.e., $f(Y) \in Y$) in a $\CF_X$-positive (i.e., stationary) set, and $f^{-1}\qp{a}$ is non-stationary for every $a \in X$. Then, for every $a \in X$ there is a function $g_a: ~ \qp{X}^{<\omega} \rightarrow X$ such that the club $C_{g_a}$ is disjoint from $f^{-1}\qp{a}$. Without loss of generality, suppose that $C_{g_a} \subseteq C_a = \bp{Y \subseteq X: ~ a \in Y}$. As in the first part of the lemma, define $g(a^\smallfrown s) = g_a(s)$. Then for every $Z \in C_g$ and every $a \in Z$, $Z$ is in $C_{g_a}$ hence is not in $f^{-1}\qp{a}$ (i.e., $f(Z) \neq a$). So $f(Z) \notin Z$ for any $Z \in C_g$, hence $C_g$ is a club disjoint with the stationary set in which $f$ is regressive, a contradiction.
		\end{proof}

		\begin{remark}
			The club filter is never $\omega_2$-complete, unlike its well-known counterpart on cardinals. Let $Y \subseteq X$ be such that $\vp{Y} = \omega_1$, and $C_a$ be the club corresponding to $f_a: \qp{X}^{<\omega} \rightarrow \bp{a}$; then $C = \bigcap_{a \in Y} C_a = \bp{Z \subseteq X: ~ Y \subseteq Z}$ is disjoint from the stationary set $\qp{X}^\omega$, hence is not a club.
		\end{remark}

		This generalized notion of club and stationary set is closely related to the well-known one defined for subsets of cardinals.

		\begin{lemma} \label{sClassicalOmg1}
			$C \subseteq \omega_1$ is a club in the classical sense if and only if $C \cup \bp{\omega_1}$ is a club in the generalized sense. $S \subseteq \omega_1$ is stationary in the classical sense if and only if it is stationary in the generalized sense.
		\end{lemma}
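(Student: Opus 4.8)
The plan is to prove the statement about clubs first and then read off the one about stationary sets. One implication for clubs is routine: suppose $C\cup\{\omega_1\}$ is a club on $\pp(\omega_1)$ witnessed by $f\colon\omega_1^{<\omega}\to\omega_1$, so that it is exactly the set of $Y\subseteq\omega_1$ with $f[Y^{<\omega}]\subseteq Y$. Put $C_f=\{\alpha<\omega_1 : f[\alpha^{<\omega}]\subseteq\alpha\}$. This set is closed (a countable increasing limit $\alpha$ of points of $C_f$ is again in $C_f$, since any finite tuple from $\alpha$ already lies in some $\beta\in C_f$ with $\beta<\alpha$) and unbounded (given $\alpha_0$, pick countable $\alpha_0<\alpha_1<\cdots$ with $f[\alpha_n^{<\omega}]\subseteq\alpha_{n+1}$, using that $f$ maps countable sets to sets bounded in $\omega_1$, and take $\sup_n\alpha_n$); and it is precisely the set of ordinals $<\omega_1$ belonging to $C\cup\{\omega_1\}$, which since $C\subseteq\omega_1$ is $C$ itself. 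Hence $C$ is a classical club.

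For the converse let $C\subseteq\omega_1$ be a classical club, necessarily with $0\notin C$ (the empty set is a closure point of no function, as the empty tuple is always admissible, so this is forced). I build $g\colon\omega_1^{<\omega}\to\omega_1$ with $g(\emptyset)=0$, $g=0$ on all unmentioned tuples, and two further families of clauses placed on disjoint sets of tuples. The \emph{shape} family lives on pairs $\langle\alpha,\beta\rangle$ with $\beta<\alpha$: for finite $\alpha$ let $g(\langle\alpha,\beta\rangle)=\beta+1$ when $\beta+1<\alpha$; for infinite $\alpha$ fix a bijection $a^\alpha\colon\omega\to\alpha$ with $a^\alpha(0)=0$ and let $g(\langle\alpha,\beta\rangle)$ be the element right after $\beta$ in the enumeration $a^\alpha$. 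If $Y$ is closed under $g$ then $0\in Y$ (else $g(\emptyset)\notin Y$), and then every $\alpha\in Y$ satisfies $\alpha\subseteq Y$ (climb from $0$ through $1,2,\dots$ in the finite case; run through $a^\alpha$ from $0$ in the infinite case), so $Y$ is an ordinal $\le\omega_1$; and since these clauses send each tuple from an ordinal $\gamma$ strictly below $\gamma$, they never disturb closure of an ordinal. The \emph{location} family lives on singletons and on pairs $\langle\alpha,\beta\rangle$ with $\alpha<\beta$: list the maximal subintervals $(p,s)$ of $[1,\omega_1)$ disjoint from $C$ (so $s\in C$ and $p$ is the $C$-predecessor of $s$, or $p=0$); for each nonempty such gap fix an increasing sequence $p=\gamma_0<\gamma_1<\cdots$ with $\sup_n\gamma_n=\sup(p,s)$ (the finite sequence enumerating $\{p\}\cup(p,s)$ when $(p,s)$ is finite), set $g(\langle\gamma_n\rangle)=\gamma_{n+1}$, and when $(p,s)$ is infinite with $s$ a successor also set $g(\langle\gamma_0,\gamma_1\rangle)=s-1$. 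Every value assigned for the gap $(p,s)$ stays below $s$, so an ordinal $d\in C$ stays closed; but every $\gamma\in(p,s)$ is broken, because walking up the chain one first hits some $\gamma_m\ge\gamma$ with $\gamma_{m-1}<\gamma$, whence $g(\langle\gamma_{m-1}\rangle)=\gamma_m\notin\gamma$, the leftover case $\gamma=s-1$ (when this equals $\sup(p,s)$) being handled by the last element of a finite chain or by the extra clause. Since $[1,\omega_1)$ is the disjoint union of $C$ with these gaps, $Y$ is closed under $g$ iff $Y$ is an ordinal $\le\omega_1$ lying in $C\cup\{\omega_1\}$; so $C\cup\{\omega_1\}$ is a club on $\pp(\omega_1)$.

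The statement about stationary sets now follows. If $S\subseteq\omega_1$ is classically stationary and $D$ is any club on $\pp(\omega_1)$ witnessed by $h$, then the classical club $C_h$ meets $S$ in some ordinal $\alpha$, and $\alpha\in C_h$ says exactly $\alpha\in D$; so $S\cap D\ne\emptyset$ and $S$ is stationary in the generalized sense. Conversely, if $S\subseteq\omega_1$ is stationary in the generalized sense and $C$ is a classical club, then $C\cup\{\omega_1\}$ is a club on $\pp(\omega_1)$ by the previous paragraph, hence meets $S$; since members of $S$ are ordinals $<\omega_1$, the meeting point lies in $C$, so $S$ is classically stationary.

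The whole difficulty sits in the converse for clubs, precisely in making the closure points \emph{exactly} $C\cup\{\omega_1\}$. Forcing them to be ordinals and keeping the members of $C$ closed is easy; the delicate part is killing every ordinal \emph{not} in $C$ — above all the limit ordinals in the interior of a gap of $C$, which a naive ``$\gamma\mapsto\gamma+1$'' rule never escapes — without creating any new closure point or losing an old one. The chain-through-each-gap device, together with keeping the ``shape'' and ``location'' clauses on disjoint families of tuples, is what reconciles these demands; checking the handful of smallest ordinals (where the arguments of the clauses must actually belong to $Y$) is the last routine chore.
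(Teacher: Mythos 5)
Your proof is correct, and its overall architecture is the same as the paper's: the generalized-to-classical direction is handled by checking that the ordinal closure points of the witnessing function form a closed unbounded subset of $\omega_1$, the classical-to-generalized direction by building a function in two layers (one forcing every closure point to be an ordinal, one controlling which ordinals survive), and the stationarity statement is then read off from the club correspondence exactly as you do. Where you genuinely diverge is in the second layer of the hard direction. The paper enumerates $C=\bp{c_\alpha:\alpha<\omega_1}$ and attaches to the tuples $(c_\alpha)^n$ a sequence cofinal in $c_{\alpha+1}$, so that an ordinal containing $c_\alpha$ is forced up to $c_{\alpha+1}$; you instead decompose $[1,\omega_1)\setminus C$ into its maximal gaps and thread an increasing chain through each gap. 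The two devices do the same job on ordinals lying strictly between two consecutive points of $C$, but your version buys something real: the gap $(0,\min C)$ is treated like any other, so the nonzero ordinals below $\min C$ are also destroyed, whereas the paper's $f_C$ (which only acts at tuples built from elements of $C$ and from the enumerations $e^\alpha$ of each $\alpha$) leaves every nonzero ordinal $\le\min C$ as a spurious closure point, so that its set of closure points is only equal to $C\cup\bp{\omega_1}$ modulo an initial segment. Your explicit handling of the top point $s-1$ of an infinite gap below a successor element of $C$, and your care in keeping the ``shape'' and ``location'' clauses on disjoint families of tuples so that $g$ is well defined, are likewise details the paper glosses over. The one caveat you share with the paper is the ordinal $0$: a classical club containing $0$ cannot correspond to a generalized club, since $\emptyset$ is never a closure point; you flag this and assume $0\notin C$, which is the right reading of the statement (and in the stationarity argument one simply replaces $C$ by $C\setminus\bp{0}$).
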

		\begin{proof}
			Let $C \subseteq \omega_1 + 1$ be a club in the generalized sense. Then $C$ is closed: given any $\alpha = \sup{\alpha_i}$ with $f[\alpha_i]^{<\omega} \subseteq \alpha_i$, $f[\alpha]^{<\omega} = \bigcup_i f[\alpha_i]^{<\omega} \subseteq \bigcup_i \alpha_i = \alpha$. Furthermore, $C$ is unbounded: given any $\beta_0 < \omega_1$, define a sequence $\beta_i$ by taking $\beta_{i+1} = \sup{f[\beta_i]^{<\omega}}$. Then $\beta_\omega = \sup{\beta_i} \in C$.

			Let now $C \subseteq \omega_1$ be a club in the classical sense. Let $C = \bp{c_\alpha: ~ \alpha < \omega_1}$ be an enumeration of the club. For every $\alpha < \omega_1$, let $\bp{d^\alpha_i: ~ i < \omega} \subseteq c_{\alpha+1}$ be a cofinal sequence in $c_{\alpha+1}$ (eventually constant), and let $\bp{e^\alpha_i: ~ i < \omega} \subseteq \alpha$ be an enumeration of $\alpha$. Define $f_C$ to be $f_C((c_\alpha)^n) = d^\alpha_n$, $f_C(0^\smallfrown \alpha^n) = e^\alpha_n$, and $f_C(s) = 0$ otherwise. The sequence $e^\alpha_i$ forces all closure points of $f_C$ to be ordinals, while the sequence $d^\alpha_i$ forces the ordinal closure points of $f_C$ being in $C$.
		\end{proof}

		\begin{lemma} \label{sClassicalK}
			If $\kappa$ is a cardinal with cofinality at least $\omega_1$, $C \subseteq \kappa$ contains a club in the classical sense if and only if $C \cup \bp{\kappa}$ contains the ordinals of a club in the generalized sense. $S \subseteq \kappa$ is stationary in the classical sense if and only if it is stationary in the generalized sense.
		\end{lemma}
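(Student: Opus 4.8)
The plan is to run the same two-sided argument as in Lemma~\ref{sClassicalOmg1}, now using the extra room given by $\cf(\kappa)\geq\omega_1$. For $f\colon\kappa^{<\omega}\to\kappa$ write $C_f$ for the associated generalized club on $\pp(\kappa)$ and $D_f=\{\alpha<\kappa:f[\alpha]^{<\omega}\subseteq\alpha\}$ for its ordinal points below $\kappa$; since $\ran f\subseteq\kappa$ we always have $\kappa\in C_f$, so the ordinals belonging to $C_f$ are exactly $D_f\cup\{\kappa\}$.

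First I would check that $D_f$ is closed in $\kappa$ and, when $\kappa$ is regular, unbounded, hence a classical club. Closedness is the argument of Lemma~\ref{sClassicalOmg1} verbatim: if $\alpha<\kappa$ is a limit point of $D_f$ then any finite sequence $s$ from $\alpha$ already lies in $\delta^{<\omega}$ for some $\delta\in D_f\cap\alpha$ (only finitely many entries to dominate), so $f(s)<\delta<\alpha$, whence $\alpha\in D_f$. For unboundedness, given $\beta_0<\kappa$ put $\beta_{n+1}=\sup f[\beta_n]^{<\omega}+1$: since $|\beta_n^{<\omega}|=|\beta_n|<\kappa=\cf(\kappa)$ each $\beta_{n+1}<\kappa$, and since $\cf(\kappa)>\omega$ the limit $\beta_\omega=\sup_n\beta_n$ is still below $\kappa$ and closed under $f$. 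This yields one direction of both equivalences at once: if $C\cup\{\kappa\}$ contains the ordinals $D_f\cup\{\kappa\}$ of some generalized club $C_f$, then $D_f\subseteq C$ is a classical club; and if $S\subseteq\kappa$ is classically stationary then, for every generalized club $C_f$, the classical club $D_f$ meets $S$, and because $S$ consists of ordinals $S\cap C_f=S\cap D_f\neq\emptyset$, so $S$ is stationary in the generalized sense.

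For the converse, given a classical club $D\subseteq\kappa$ I would build an $f$ whose ordinal points lie inside $D$, e.g.\ $f(s)=\min\bigl(D\setminus(\max(s)+1)\bigr)$ with $f(\langle\rangle)=\min D$. One then checks directly that for $\alpha<\kappa$ one has $\alpha\in D_f$ iff $D\cap\alpha$ is cofinal in $\alpha$, i.e.\ iff $\alpha$ is a limit point of $D$; since $D$ is closed this forces $D_f\subseteq D$, and $D_f$ is the classical club of limit points of $D$. Hence $C_f$ is a generalized club whose ordinals $D_f\cup\{\kappa\}$ lie inside $D\cup\{\kappa\}\subseteq C\cup\{\kappa\}$, giving the remaining direction for clubs; and if $S\subseteq\kappa$ is generalized-stationary then for any classical club $D$ this $C_f$ meets $S$, so $S\cap D\supseteq S\cap D_f=S\cap C_f\neq\emptyset$ and $S$ is classically stationary. (Alternatively one may recycle the explicit $c_\alpha/d^\alpha_i/e^\alpha_i$ gadget of Lemma~\ref{sClassicalOmg1}, the cofinal sequences $d^\alpha_i$ again forcing the ordinal closure points into $D$.)

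The main obstacle — and the only place the hypothesis on $\cf(\kappa)$ is really used — is the unboundedness of $D_f$: the step $\sup f[\beta_n]^{<\omega}<\kappa$ needs $|\beta_n|<\cf(\kappa)$, which is automatic for $\kappa$ regular but can fail for singular $\kappa$ (a function on $\kappa^{<\omega}$ may have no ordinal closure point below a singular $\kappa$ at all). For singular $\kappa$ one must therefore read ``the ordinals of a club'' as carrying the requirement that they be cofinal in $\kappa$ — equivalently that they form a classical club — after which, since the closedness computation above never used regularity, every step goes through unchanged. Beyond this, everything is routine bookkeeping: the closure computation, the identification of $D_f$ with the limit points of $D$, and the repeated remark that intersecting a generalized club with a set of ordinals is the same as intersecting its ordinal part.
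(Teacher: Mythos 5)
Your proof follows the paper's own argument exactly: the ordinal closure points of $f$ below $\kappa$ form a closed (and, for $\kappa$ regular, unbounded) set, and conversely $f(s)=\min\bp{c\in C:\ \sup(s)<c}$ yields a generalized club whose ordinals below $\kappa$ are precisely the limit points of the classical club $C$. Your closing observation is moreover a genuine improvement on the paper, which hides the issue behind ``the same reasoning of Lemma~\ref{sClassicalOmg1}'': unboundedness of the set of ordinal closure points really does need $\vp{\beta_n}<\cf(\kappa)$, so for singular $\kappa$ of uncountable cofinality one can code a cofinal map $\cf(\kappa)\to\kappa$ into $f$ and obtain a generalized club with no (or only boundedly many) ordinal members below $\kappa$, whence the right-to-left club direction and the classical-to-generalized stationarity direction fail as literally stated; the lemma should really be read with $\kappa$ regular.
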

		\begin{proof}
			If $C$ is a club in the generalized sense, then $C \cap \kappa$ is closed and unbounded by the same reasoning of Lemma \ref{sClassicalOmg1}. Let now $C$ be a club in the classical sense, and define $f: ~ \kappa^{< \omega} \rightarrow \kappa$ to be $f(s) = \min \bp{c \in C: \sup{s} < c}$. Then $C_f \cap \kappa$ is exactly the set of ordinals in $C \cup \bp{\kappa}$ that are limits within $C$.
		\end{proof}

		\begin{remark}
			If $S$ is stationary in the generalized sense on $\omega_1$, then $S \cap \omega_1$ is stationary (since $\omega_1+1$ is a club by Lemma \ref{sClassicalOmg1}), while this is not true for $\kappa > \omega_1$. In this case, $\pp(\kappa) \setminus (\kappa+1)$ is a stationary set: given any function $f$, the closure under $f$ of $\bp{\omega_1}$ is countable, hence not an ordinal.
		\end{remark}

		\begin{lemma}[Lifting and Projection] \label{sLifting}
			Let $X \subseteq Y$ be uncountable sets. If $S$ is stationary on $\pp(Y)$, then $S \downarrow X = \bp{B \cap X: ~ B \in S}$ is stationary. If $S$ is stationary on $\pp(X)$, then $S \uparrow Y = \bp{B \subseteq Y: ~ B \cap X \in S}$ is stationary.
		\end{lemma}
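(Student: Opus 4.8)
The plan is to handle both halves with one device: transform a given club on the target domain into a club on the source domain, meet it with the hypothesised stationary set, and transport the resulting point back across the restriction map $B \mapsto B \cap X$ (respectively the extension relation $B \cap X \in S$).

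\emph{Projection.} Assume $S$ is stationary on $\pp(Y)$ and fix an arbitrary club $C = C_{f_C}$ on $\pp(X)$, with $f_C : X^{<\omega} \to X$. Extend $f_C$ to $g : Y^{<\omega} \to Y$ by putting $g(s) = f_C(s)$ whenever $\ran(s) \subseteq X$ and $g(s) = f_C(\emptyset)$ otherwise. The point is that any $b \subseteq Y$ closed under $g$ has $b \cap X$ closed under $f_C$: for $s \in (b \cap X)^{<\omega}$ we get $g(s) = f_C(s)$, which lies in $b$ (closure under $g$) and in $X$ (range of $f_C$), hence in $b \cap X$. By stationarity choose $b \in S \cap C_g$; then $b \cap X \in (S \downarrow X) \cap C$. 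Since $C$ was arbitrary and $\bigcup(S \downarrow X) = (\bigcup S) \cap X = X$, the set $S \downarrow X$ is stationary on $\pp(X)$. This direction is routine.

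\emph{Lifting.} Assume $S$ is stationary on $\pp(X)$ and fix an arbitrary club $D = C_F$ on $\pp(Y)$, with $F : Y^{<\omega} \to Y$. For $a \subseteq X$ let $\overline a \subseteq Y$ be the closure of $a$ under $F$; then $\overline a \in D$ and $a \subseteq \overline a \cap X$. It suffices to produce $a \in S$ with $\overline a \cap X = a$, because then $b := \overline a$ satisfies $b \cap X = a \in S$, so $b \in (S \uparrow Y) \cap D$. The obstacle is that $\overline a$ need not stay inside $X$, and elements of the closure that leave $X$ may later come back, so closing $a$ under a single function $X^{<\omega} \to X$ is insufficient. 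Instead, for each finite $s \in X^{<\omega}$ observe that $\overline{\ran(s)} \cap X$ is countable (the $F$-closure of a finite set is countable), fix an enumeration $\ap{c^s_n : n < \omega}$ of it, and set $g_n(s) = c^s_n$. By $\sigma$-completeness of $\CF_X$ the set $C := \bigcap_{n < \omega} C_{g_n}$ is a club on $\pp(X)$, and if $a \in C$ then every $y \in \overline a \cap X$ lies in $\overline{\ran(s)}$ for some finite $s \subseteq a$ (finitarity of $F$), hence $y = g_n(s) \in a$ for some $n$; thus $\overline a \cap X = a$. Picking $a \in S \cap C$ completes the proof (and $\bigcup(S \uparrow Y) = Y$ is clear, adjoining a single point of $Y$ to a member of $S$ when necessary). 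Equivalently, one could take $\overline a := M \cap Y$ for an elementary submodel $M \prec H(\theta)$, $\theta$ large, with $F, X, Y \in M$ and $M \cap X = a$; the combinatorial version simply makes that closure explicit.

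The only genuinely delicate point is the lifting step: keeping track of the part of the $F$-closure of $a$ that wanders outside $X$ and returns, which is what forces the passage from one generating function to the countable family $(g_n)_{n < \omega}$ and the appeal to the $\sigma$-completeness lemma in order to recover a bona fide club on $\pp(X)$. Everything else is bookkeeping about unions of the families involved.
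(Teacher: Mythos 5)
Your proof is correct. The projection half is essentially the paper's argument (the paper extends $f_C$ to $Y^{<\omega}$ ``in any way''; your choice of default value $f_C(\emptyset)$ is one such way, and the verification that $B\cap X$ is then $f_C$-closed is the same). The lifting half reaches the same goal by a genuinely different packaging. The paper fixes a point $x\in X$ and builds a \emph{single} function $g:Y^{<\omega}\to Y$ by enumerating all terms in $f$ and the constant $x$ (so that $g[B^{<\omega}]$ is already the full $f$-closure of $B\cup\bp{x}$), then retracts it to $h:X^{<\omega}\to X$ by sending values outside $X$ to $x$; a $B\in S$ closed under $h$ satisfies $g[B]\cap X=B$, and $g[B]$ is the witness in $(S\uparrow Y)\cap C_f$. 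You instead keep the closure operator $a\mapsto\overline a$ abstract, encode its trace on $X$ by the countable family $g_n(s)=c^s_n$, and glue these into one club via the $\sigma$-completeness lemma, taking $\overline a$ itself as the witness. The underlying idea --- controlling the part of the $F$-closure that leaves $X$ and returns --- is the same in both; the paper inlines the $\sigma$-completeness device (the surjection $\pi:\omega\to\omega^2$) into the construction of $g$, whereas you factor it through the already-proved lemma, which is more modular and makes the ``delicate point'' you flag explicit. One cosmetic repair: when $\overline{\ran(s)}\cap X$ is finite or empty (e.g.\ $s=\emptyset$ with the $F$-closure of $\emptyset$ disjoint from $X$), the enumeration $\ap{c^s_n:\ n<\omega}$ must be allowed to repeat or default to a fixed element of $X$ so that each $g_n$ is total; this does not affect the argument, since you only use $\overline{\ran(s)}\cap X\subseteq a$.
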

		\begin{proof}
			For the first part, given any function $f: ~ \qp{X}^{<\omega} \rightarrow X$, extend it in any way to a function $g: ~ \qp{Y}^{<\omega} \rightarrow Y$. Since $S$ is stationary, there exists a $B \in S$ closed under $g$, hence $B \cap X \in S \downarrow X$ is closed under $f$.

			For the second part, fix an element $x \in X$. Given any function $f: ~ \qp{Y}^{<\omega} \rightarrow Y$, replace it with a function $g: ~ \qp{Y}^{<\omega} \rightarrow Y$ such that for any $A \subset Y$, $g[A]$ contains $A \cup \bp{x}$ and is closed under $f$. To achieve this, fix a surjection $\pi: ~ \omega \rightarrow \omega^2$ (with projections $\pi_1$ and $\pi_2$) such that $\pi_2(n) \leq n$ for all $n$, and an enumeration $\ap{t^n_i: ~ i < \omega}$ of all first-order terms with $n$ variables, function symbols $f_i$ for $i \leq n$ (that represent an $i$-ary application of $f$) and a constant $x$. The function $g$ can now be defined as $g(s) = t^{\pi_2(\vp{s})}_{\pi_1(\vp{s})}(s \res \pi_2(\vp{s}))$. Finally, let $h: ~ \qp{X}^{<\omega} \rightarrow X$ be defined by $h(s) = g(s)$ if $g(s) \in X$, and $h(s) = x$ otherwise. Since $S$ is stationary, there exists a $B \in S$ with $h[B] \subseteq B$, but $h[B] = g[B] \cap X$ (since $x$ is always in $g[B]$) and $g[B] \supset B$, so actually $h[B] = g[B] \cap X = B \in S$. Then, $g[B] \in S \uparrow Y$ and $g[B]$ is closed under $f$ (by definition of $g$).
		\end{proof}

		\begin{remark}
			Following the same proof, a similar result holds for clubs. If $C_f$ is club on $\pp(X)$, then $C_f \uparrow Y = C_g$ where $g = f ~\cup~ \mathrm{Id}_{Y \setminus X}$. If $C_f$ is club on $\pp(Y)$ such that $\bigcap C_f$ intersects $X$ in $x$, and $g, h$ are defined as in the second part of Theorem \ref{sLifting}, $C_f \downarrow X = C_h$ is club. If $\bigcap C_f$ is disjoint from $X$, $C_f \downarrow X$ is not a club, but is still true that it contains a club (namely, $\cp{C_f \cap C_{\bp{x}}} \downarrow X$ for any $x \in X$).
		\end{remark}

		\begin{theorem}[Ulam] \label{sUlam}
			Let $\kappa$ be an infinite cardinal. Then for every stationary set $S \subseteq \kappa^+$, there exists a partition of $S$ into $\kappa^+$ many disjoint stationary sets.
		\end{theorem}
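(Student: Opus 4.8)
The plan is to build an \emph{Ulam matrix} on $\kappa^+$ and read off the partition from it. Since $\kappa^+$ is a successor cardinal, for every $\beta$ with $\kappa \le \beta < \kappa^+$ we have $\vp{\beta} = \kappa$, so we may fix a surjection $f_\beta \colon \kappa \to \beta$. For $\alpha < \kappa$ and $\xi < \kappa^+$ set
\[
A_{\alpha,\xi} = \bp{\beta : \kappa \le \beta < \kappa^+,\ f_\beta(\alpha) = \xi}.
\]
Two elementary facts: (i) for each fixed $\alpha$ (a ``row'') the sets $A_{\alpha,\xi}$, $\xi < \kappa^+$, are pairwise disjoint, because $f_\beta$ is a function; (ii) for each fixed $\xi$ (a ``column'') we have $\bigcup_{\alpha<\kappa} A_{\alpha,\xi} = \bp{\beta : \max(\kappa,\xi+1) \le \beta < \kappa^+}$, since $f_\beta$ maps onto $\beta$, so that the complement of this union in $\kappa^+$ is bounded.

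Now let $S \subseteq \kappa^+$ be stationary; discarding $S \cap \kappa$, which is bounded hence non-stationary, we may assume $S \subseteq [\kappa,\kappa^+)$. The core of the argument is the claim: \emph{there is a row $\alpha < \kappa$ with $S \cap A_{\alpha,\xi}$ stationary for $\kappa^+$-many $\xi$.} Suppose not. Then for each $\alpha < \kappa$ the set $B_\alpha = \bp{\xi : S \cap A_{\alpha,\xi}\text{ stationary}}$ has size $\le \kappa$, so $\bigcup_{\alpha<\kappa} B_\alpha$ has size $\le \kappa$ and is therefore bounded, say by $\delta < \kappa^+$. For this $\delta$: by (ii) the set $S \setminus \bigcup_{\alpha<\kappa} A_{\alpha,\delta}$ is bounded, hence non-stationary, and by choice of $\delta$ each $S \cap A_{\alpha,\delta}$ is non-stationary; writing
\[
S = \Bigl(S \setminus \bigcup_{\alpha<\kappa} A_{\alpha,\delta}\Bigr) \cup \bigcup_{\alpha<\kappa}\bigl(S \cap A_{\alpha,\delta}\bigr)
\]
exhibits $S$ as a union of $\kappa$ many non-stationary subsets of $\kappa^+$. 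But $\NS_{\kappa^+}$ is $\kappa^+$-complete, since an intersection of $\le \kappa < \cf(\kappa^+)$ clubs on the regular cardinal $\kappa^+$ is again a club; hence $S$ would be non-stationary, contradicting the hypothesis.

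Given such an $\alpha$, enumerate a size-$\kappa^+$ subset of $B_\alpha$ as $\bp{\xi_i : i < \kappa^+}$ and set $T_i = S \cap A_{\alpha,\xi_i}$ for $0 < i < \kappa^+$ and $T_0 = S \setminus \bigcup_{0 < i < \kappa^+} T_i$. By (i) the $T_i$ are pairwise disjoint and they cover $S$; each $T_i$ with $i > 0$ is stationary by the choice of $B_\alpha$, and $T_0 \supseteq S \cap A_{\alpha,\xi_0}$ is stationary too. Thus $\bp{T_i : i < \kappa^+}$ is the desired partition. The only nontrivial step is the claim, where the work lies in the boundedness/pigeonhole count and in invoking the $\kappa^+$-completeness of the non-stationary ideal on the regular cardinal $\kappa^+$; the rest is bookkeeping. (By Lemma~\ref{sClassicalK} this simultaneously establishes the statement for stationarity in the generalized sense.)
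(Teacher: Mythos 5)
Your proof is correct, and it builds the same Ulam matrix as the paper (up to transposing the roles of the $\kappa$- and $\kappa^+$-valued indices), but you select the good row by a different argument. The paper applies Fodor's Lemma (Lemma~\ref{sFodor}) twice: once to the regressive map $\beta \mapsto \xi$ along each column $\alpha$ to produce $\xi_\alpha$ with $A^{\xi_\alpha}_\alpha \cap S$ stationary, and once more to the regressive map $\alpha \mapsto \xi_\alpha$ to find a single row $\xi^*$ that works for a \emph{stationary} set $T$ of columns. You instead argue by contradiction with a counting argument: if every row had at most $\kappa$ good columns, you pick a column $\delta$ beyond all of them and split $S$ along that column into $\kappa$ many non-stationary pieces plus a bounded remainder, contradicting the $\kappa^+$-completeness of $\NS_{\kappa^+}$. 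Your route is more elementary --- it uses only $\kappa^+$-completeness of the non-stationary ideal on the regular cardinal $\kappa^+$, never normality or pressing-down --- which is a genuine economy. What the paper's route buys is the stronger intermediate conclusion, recorded in the remark following Theorem~\ref{sUlam}, that the chosen row meets $S$ stationarily on a stationary (not merely size-$\kappa^+$) set of columns. Both proofs finish with the same bookkeeping, absorbing the leftover of $S$ into one of the stationary pieces to complete the partition; just remember that after you ``discard'' $S \cap \kappa$ you must fold it back into $T_0$ at the end, which your definition of $T_0$ does implicitly.
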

		\begin{proof}
			For every $\beta \in [\kappa,\kappa^+)$, fix a bijection $\pi_\beta: ~ \kappa \rightarrow \beta$. For $\xi < \kappa$, $\alpha < \kappa^+$, define $A^\xi_\alpha = \bp{\beta < \kappa^+: ~ \pi_\beta(\xi) = \alpha}$ (notice that $\beta > \alpha$ when $\alpha \in \ran(\pi_\beta)$). These sets can be fit in a $(\kappa \times \kappa^+)$-matrix, called \emph{Ulam Matrix}, where two sets in the same row or column are always disjoint. Moreover, every row is a partition of $\bigcup_{\alpha < \kappa^+} A^\xi_\alpha = \kappa^+$, and every column is a partition of $\bigcup_{\xi < \kappa} A^\xi_\alpha = \kappa^+ \setminus (\alpha+1)$.

			Let $S$ be a stationary subset of $\kappa^+$. For every $\alpha < \kappa^+$, define $f_\alpha: ~ S \setminus (\alpha+1) \rightarrow \kappa$ by $f_\alpha(\beta) = \xi$ if $\beta \in A^\xi_\alpha$. Since $\kappa^+ \setminus (\alpha+1)$ is a club, every $f_\alpha$ is regressive on a stationary set, then by Fodor's Lemma \ref{sFodor} there exists a $\xi_\alpha < \kappa$ such that $f^{-1}_\alpha\qp{\bp{\xi_\alpha}} = A^{\xi_\alpha}_\alpha \cap S$ is stationary. Define $g: ~ \kappa^+ \rightarrow \kappa$ by $g(\alpha) = \xi_\alpha$, $g$ is regressive on the stationary set $\kappa^+ \setminus \kappa$, again by Fodor's Lemma \ref{sFodor} let $\xi^* < \kappa$ be such that $g^{-1}\qp{\bp{\xi^*}} = T$ is stationary. Then, the row $\xi^*$ of the Ulam Matrix intersects $S$ in a stationary set for stationary many columns $T$. So $S$ can be partitioned into $S \cap A^{\xi^*}_\alpha$ for $\alpha \in T \setminus \bp{\min(T)}$, and $S \setminus \bigcup_{\alpha \in T \setminus \bp{\min(T)}} A^{\xi^*}_\alpha$.
		\end{proof}

		\begin{remark}
			In the proof of Theorem \ref{sUlam} we actually proved something more: the existence of a Ulam Matrix, i.e. a $\kappa \times \kappa^+$-matrix such that every stationary set $S \subseteq \kappa^+$ is compatible (i.e., has stationary intersection) with stationary many elements of a certain row.
		\end{remark}

%%%%%%%%%%%%%%%%%%%%%%%%%%%%%%%%%%%%%%%%%%%%%%%%%%%%%%%%%%%%%%%%%%%%%%%%%%%%%%%%%%%%%%%%%%%%%%%%%%%%%%%%

	\subsection{More on stationarity}

		In this section we present some notable definition and results about stationary sets that are not strictly needed for the rest of the notes. Reference text for this section is \cite[Chapter 2]{Larson}.

		\begin{definition}
			Let $X$ be an uncountable set, $\kappa < \vp{X}$ be a cardinal. A set $C$ is a \emph{club} on $\qp{X}^\kappa$ (resp. $\qp{X}^{<\kappa}$) iff there is a function $f_C: ~ X^{<\omega} \rightarrow X$ such that $C$ is the set of elements of $\qp{X}^\kappa$ (resp. $\qp{X}^{<\kappa}$) closed under $f_C$, i.e.
			\[
			C = \bp{ Y \in \qp{X}^\kappa: ~ f_C[Y]^{<\omega} \subseteq Y }
			\]
			A set $S$ is \emph{stationary} on $\qp{X}^\kappa$ (respectively $\qp{X}^{<\kappa}$) iff it intersects every club on $\qp{X}^\kappa$ (respectively $\qp{X}^{<\kappa}$).
		\end{definition}

		This definition is justified by the observation that $\qp{X}^\kappa$ (resp. $\qp{X}^{<\kappa}$) is stationary on $X$ for every $\kappa < \vp{X}$. As in the unrestricted case, the club sets on $\qp{X}^\kappa$ (resp. $\qp{X}^{<\kappa}$) form a normal $\sigma$-complete filter on $\qp{X}^\kappa$ (resp. $\qp{X}^{<\kappa}$). We can also state an analogous formulation of Lemma \ref{sLifting}, with additional care in the case $\qp{X}^\kappa$: in that case, the lifting $\qp{X}^\kappa \uparrow \qp{Y}^\kappa$ may not be a club on $\qp{Y}^\kappa$ if $\vp{X} < \vp{Y}$. For example, such a set is not a club if there exists a Completely J\'onsson cardinal above $\vp{Y}$ since its complement $\qp{Y}^\kappa \setminus \cp{\qp{X}^\kappa \uparrow \qp{Y}^\kappa} = \qp{X}^{<\kappa} \uparrow \qp{Y}^\kappa$ is stationary.
		
		\begin{lemma}[Lifting and Projection]
			Let $X \subseteq Y$ be uncountable sets, $\kappa < \vp{X}$ be a cardinal. If $C$ contains a club on $\qp{Y}^\kappa$ (resp. $\qp{Y}^{<\kappa}$), then $C \downarrow \qp{X}^\kappa = \cp{C \downarrow X} \cap \qp{X}^\kappa$ (resp. $C \downarrow \qp{X}^{<\kappa}$) contains a club on $\qp{X}^\kappa$ (resp. $\qp{X}^{<\kappa}$). If $C$ contains a club on $\qp{X}^{<\kappa}$, then $C \uparrow \qp{Y}^{<\kappa} = \cp{C \uparrow Y} \cap \qp{Y}^{<\kappa}$ contains a club on $\qp{Y}^{<\kappa}$.

			If $S$ is stationary on $\qp{Y}^{<\kappa}$, then $S \downarrow \qp{X}^{<\kappa}$ is stationary on $\qp{X}^{<\kappa}$. If $S$ is stationary on $\qp{X}^\kappa$ (resp. $\qp{X}^{<\kappa}$), then $S \uparrow \qp{Y}^\kappa$ is stationary on $\qp{Y}^\kappa$ (resp. with $\qp{Y}^{<\kappa}$).
		\end{lemma}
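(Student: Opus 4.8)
The plan is to derive all the assertions from the unrestricted Lifting and Projection Lemma~\ref{sLifting} and the remark following it, the only extra work being cardinality bookkeeping. I will use repeatedly the two facts recorded above: the clubs on $\qp{X}^\kappa$ (resp.\ $\qp{X}^{<\kappa}$) are precisely the sets $C_f \cap \qp{X}^\kappa$ (resp.\ $C_f \cap \qp{X}^{<\kappa}$) for unrestricted clubs $C_f$ on $\pp(X)$, and these form a $\sigma$-complete filter; and $\qp{X}^\kappa$, $\qp{X}^{<\kappa}$ are stationary on $X$, so every unrestricted club on $\pp(X)$ meets them. The standing cardinality observation is that, for $A$ of size $\leq\lambda$ (resp.\ $<\lambda$) with $\lambda$ infinite (resp.\ uncountable), the closure of $A$ under countably many finitary operations still has size $\leq\lambda$ (resp.\ $<\lambda$); this is exactly what confines projected and lifted sets to the prescribed size class, and it has no analogue when one intersects a $\kappa$-sized subset of $Y$ with $X$, which is why several clauses are stated only for $\qp{\cdot}^{<\kappa}$.

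\emph{Downward projection of clubs.} Suppose $D = C_f \cap \qp{Y}^\kappa \subseteq C$ is a club on $\qp{Y}^\kappa$. Fix $x \in X$ and, by $\sigma$-completeness, replace $f$ by $f'$ with $C_{f'} = C_f \cap \bp{Z : x \in Z}$, so $x \in \bigcap C_{f'}$ and still $C_{f'} \cap \qp{Y}^\kappa \subseteq D$. By the proof of the second part of Lemma~\ref{sLifting} (applied to $f'$) and the remark following it, there are $g : Y^{<\omega} \to Y$ and $h : X^{<\omega} \to X$ such that every $B \in C_h$ satisfies $g[B] \supseteq B$, $g[B] \cap X = B$ and $g[B] \in C_{f'}$. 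Then $E := C_h \cap \qp{X}^\kappa$ is a club on $\qp{X}^\kappa$, and if $B \in E$ then $g[B]$ has size $\kappa$, hence $g[B] \in C_{f'} \cap \qp{Y}^\kappa \subseteq D \subseteq C$, so $B = g[B] \cap X \in C \downarrow \qp{X}^\kappa$. This gives $E \subseteq C \downarrow \qp{X}^\kappa$; the $\qp{X}^{<\kappa}$ case is word for word the same with ``size $\kappa$'' read as ``size $<\kappa$''.

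\emph{Upward lifting of clubs and downward projection of stationary sets, both for $\qp{\cdot}^{<\kappa}$.} For the club statement let $D = C_f \cap \qp{X}^{<\kappa} \subseteq C$, $f : X^{<\omega} \to X$, and set $g = f \cup \mathrm{Id}_{Y \setminus X}$, so $C_g$ is an unrestricted club on $\pp(Y)$ by the remark after Lemma~\ref{sLifting}. If $B \in C_g \cap \qp{Y}^{<\kappa}$ then $B \cap X$ is closed under $f$ (since $g$ agrees with $f$ on $X^{<\omega}$, taking values in $X$) and $\vp{B \cap X} < \kappa$, so $B \cap X \in D \subseteq C$, i.e.\ $B \in C \uparrow \qp{Y}^{<\kappa}$; thus the club $C_g \cap \qp{Y}^{<\kappa}$ witnesses the claim. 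This breaks for $\qp{Y}^\kappa$, since a $\kappa$-sized $B$ closed under $g$ may meet $X$ in fewer than $\kappa$ points --- the Completely J\'onsson phenomenon noted before the lemma --- which is why the clause is restricted. The stationary projection is the dual argument: given a club $C_f \cap \qp{X}^{<\kappa}$ on $\qp{X}^{<\kappa}$, extend $f$ to $g$ on $Y^{<\omega}$, use stationarity of $S$ on $\qp{Y}^{<\kappa}$ to pick $B \in S \cap C_g \cap \qp{Y}^{<\kappa}$, and observe that $B \cap X$ lies both in $C_f \cap \qp{X}^{<\kappa}$ and in $S \downarrow \qp{X}^{<\kappa}$.

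\emph{Upward lifting of stationary sets.} Given a club $E = C_f \cap \qp{Y}^\kappa$ on $\qp{Y}^\kappa$, fix $x \in X$ and form $g, h$ from $f$ as in the downward projection of clubs above, so $C_h \cap \qp{X}^\kappa$ is a club on $\qp{X}^\kappa$; stationarity of $S$ on $\qp{X}^\kappa$ then yields $B \in S \cap C_h \cap \qp{X}^\kappa$. Now $g[B]$ has size $\kappa$, is closed under $f$ (so $g[B] \in E$) and satisfies $g[B] \cap X = B \in S$, whence $g[B] \in S \uparrow \qp{Y}^\kappa$; so $E$ meets $S \uparrow \qp{Y}^\kappa$, as required. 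The $\qp{X}^{<\kappa} \to \qp{Y}^{<\kappa}$ version is the same with $\vp{g[B]} < \kappa$. I expect no genuine obstacle here beyond the care already described: choosing the modified functions $g, h$ so that simultaneously $g[B] \cap X = B$ and $g[B]$ stays in the intended size class, and recognising that it is precisely this size constraint that forbids upward lifting of $\kappa$-sized clubs.
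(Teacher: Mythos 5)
The paper states this lemma without proof, only remarking that it is the ``analogous formulation'' of Lemma \ref{sLifting}; your argument carries out exactly that intended reduction --- reusing the $g,h$ construction and the remark following Lemma \ref{sLifting}, together with the cardinality bookkeeping $\vp{g[B]} = \vp{B} + \aleph_0$ that confines the lifted and projected sets to the right size class --- and it is correct, including the correct identification of why the $\qp{Y}^\kappa$ clauses must be omitted for upward lifting of clubs and downward projection of stationary sets.
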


		We can now define a natural ordering on stationary sets, that can be used to define a poset of notable relevance in set theory.

		\begin{definition}
			Let $S$, $T$ be stationary sets. We write $S \leq T$ iff $\bigcup S \supseteq \bigcup T$ and $S \subseteq T \uparrow \cp{\bigcup S}$.
		\end{definition}

		\begin{definition}
			The \emph{full stationary tower} up to $\alpha$ is the poset $\PP_{<\alpha}$ of all the stationary sets $S \in V_\alpha$ ordered by $S \leq T$ as defined above. The stationary tower restricted to size $\kappa$ up to $\alpha$ is the poset $\QQ^\kappa_{<\alpha} = \bp{S \in V_\alpha: ~ S \subseteq \qp{\bigcup S}^\kappa \text{ stationary}}$ ordered by the same relation.
		\end{definition}

%%%%%%%%%%%%%%%%%%%%%%%%%%%%%%%%%%%%%%%%%%%%%%%%%%%%%%%%%%%%%%%%%%%%%%%%%%%%%%%%%%%%%%%%%%%%%%%%%%%%%%%%

%%%%%%%%%%%%%%%%%%%%%%%%%%%%%%%%%%%%%%%%%%%%%%%%%%%%%%%%%%%%%%%%%%%%%%%%%%%%%%%%%%%%%%%%%%%%%%%%%%%%%%%%

%%%%%%%%%%%%%%%%%%%%%%%%%%%%%%%%%%%%%%%%%%%%%%%%%%%%%%%%%%%%%%%%%%%%%%%%%%%%%%%%%%%%%%%%%%%%%%%%%%%%%%%%

	\section{Forcing axioms} \label{sec:forcing}

		Forcing is well-known as a versatile tool for proving consistency results. The purpose of forcing axioms is to turn it into a powerful tool for proving theorems: this intuition is partly justified by the following \emph{Cohen's Absoluteness Lemma} \ref{fCohen}.

		In the following notes we will use the notation $M \prec_n N$ to mean $M \prec_{\Sigma_n} N$ (or equivalently $M \prec_{\Pi_n} N$, $M \prec_{\Delta_{n+1}} N$). Reference text for this section is \cite[Chapter 3]{Bekkali}. We first recall the following lemma.
	
		\begin{lemma}[Levi's Absoluteness]
			Let $\kappa > \omega$ be a cardinal. Then $H(\kappa) \prec_1 V$.
		\end{lemma}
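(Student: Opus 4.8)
The plan is to show that $H(\kappa) \prec_1 V$ by verifying that $\Sigma_1$ formulas with parameters in $H(\kappa)$ are absolute between $H(\kappa)$ and $V$. Since $H(\kappa)$ is transitive, the downward direction is immediate: a $\Sigma_1$ formula $\exists y\, \varphi(y, \bar{a})$ with $\varphi$ bounded ($\Delta_0$) that holds in $H(\kappa)$ is witnessed there, and $\Delta_0$ formulas are absolute for transitive sets, so the same witness works in $V$. The content is in the upward direction: if $V \models \exists y\, \varphi(y, \bar{a})$ with $\bar{a} \in H(\kappa)$ and $\varphi$ $\Delta_0$, I must produce a witness lying in $H(\kappa)$.

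First I would fix a witness $b \in V$ with $V \models \varphi(b, \bar{a})$. The key step is a reflection/condensation argument: let $\theta$ be a large enough regular cardinal so that $b, \bar{a} \in H(\theta)$ and $\varphi(b,\bar{a})$ is absolute between $H(\theta)$ and $V$ (using that $\Delta_0$ formulas are absolute and all relevant sets are in $H(\theta)$). By the downward Löwenheim–Skolem theorem, take an elementary submodel $M \prec H(\theta)$ with $\bar{a} \cup \{b\} \subseteq M$, $\mathrm{trcl}(\bar a) \subseteq M$, and $|M| < \kappa$ — this is possible because each parameter in $\bar a$ lies in $H(\kappa)$, hence has transitive closure of size $< \kappa$, and $\kappa$ is infinite so we can arrange $|M| < \kappa$ while keeping $M$ transitive-closure-closed on the parameters. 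Then collapse: let $\pi: M \to N$ be the transitive collapse. Since the parameters $\bar a$ and all their elements are in $M$, the collapse fixes them, $\pi(a_i) = a_i$. And $\pi(b)$ is some element of $N$.

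Now $N$ is transitive of size $< \kappa$, so $N \subseteq H(\kappa)$, hence $\pi(b) \in H(\kappa)$. Since $M \prec H(\theta)$ and $H(\theta) \models \varphi(b, \bar a)$, we get $M \models \varphi(b, \bar a)$, and applying the isomorphism $\pi$ gives $N \models \varphi(\pi(b), \bar a)$. Because $\varphi$ is $\Delta_0$ and $N$ is transitive, $\varphi(\pi(b), \bar a)$ holds in $V$. Thus $\pi(b) \in H(\kappa)$ is the required witness, so $H(\kappa) \models \exists y\, \varphi(y, \bar a)$. I expect the main obstacle to be purely bookkeeping: making sure the elementary submodel is chosen to contain the transitive closures of the parameters (so that the collapse fixes them) while keeping its cardinality strictly below $\kappa$, and being careful that $\varphi$ may have a block of parameters rather than a single one. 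None of this is deep, but it must be stated cleanly.
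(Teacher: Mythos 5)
Your proposal is correct and follows essentially the same route as the paper's proof: the downward direction by $\Delta_0$-absoluteness for the transitive set $H(\kappa)$, and the upward direction by fixing a witness in some $H(\theta)$, taking a downward L\"owenheim--Skolem submodel of size $<\kappa$ containing the transitive closures of the parameters, and Mostowski-collapsing so that the collapsed witness lands in $H(\kappa)$. The bookkeeping you flag (keeping $\vp{M}<\kappa$ while absorbing $\trcl$ of finitely many parameters, each of size $<\kappa$) is handled in the paper exactly as you describe.
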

		\begin{proof}
			Given any $\Sigma_1$ formula $\phi = \exists x ~ \psi(x,p_1,\ldots,p_n)$ with parameters $p_1, \ldots, p_n$ in $H(\kappa)$, if $V \vDash \neg \phi$ also $H(\kappa) \vDash \neg \phi$ since $H(\kappa) \subseteq V$ and $\psi$ is $\Delta_0$ hence absolute for transitive models. Suppose now that $V \vDash \phi$, so there exists a $q$ such that $V \vDash \psi(q,p_1,\ldots,p_n)$. Let $\lambda$ be large enough so that $q \in H(\lambda)$.
			By downward L\"owenheim Skolem Theorem there exists an $M \prec H(\lambda)$ such that $q \in M$, $\trcl(p_i) \subseteq M$ for all $i < n$, and $\vp{M} = \omega \cup \vp{\bigcup_{i<n} \trcl(p_i)} < \kappa$. Let $N$ be the Mostowski Collapse of $M$, with $\pi: ~ M \rightarrow N$ corresponding isomorphism. Since $H(\lambda) \vDash \psi(q,p_1,\ldots,p_n)$, the same does $M$ and $N \vDash \psi(\pi(q),p_1,\ldots,p_n)$. Since $N$ is transitive of cardinality less than $\kappa$, $N \subseteq H(\kappa)$ so $\pi(q) \in H(\kappa)$ and $H(\kappa) \vDash \phi$.
		\end{proof}
	
		\begin{lemma}[Cohen's Absoluteness] \label{fCohen}
			Let $T$ be any theory extending $\ZFC$, and $\phi$ be any $\Sigma_1$ formula with a parameter $p$ such that $T \vdash p \subseteq \omega$. Then $T \vdash \phi(p)$ if and only if $T \vdash \exists \PP ~ (\1_\PP \Vdash \phi(p))$.
		\end{lemma}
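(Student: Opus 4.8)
The plan is to establish the two directions separately. The forward implication is soft: assume $T \vdash \phi(p)$ and write $\phi(x) = \exists y\,\psi(y,x)$ with $\psi$ a $\Delta_0$ formula. For any model $M \vDash T$, any poset $\PP \in M$ and any $M$-generic $G$, a witness $y_0 \in M$ for $M \vDash \psi(y_0,p)$ is still a witness in $M[G]$, since $M \subseteq M[G]$ are transitive models of $\ZFC$, $\psi$ is absolute, and $p^{M[G]}=p$. Hence $M \vDash \1_\PP \Vdash \phi(\check p)$ for every such $\PP$, so in fact $T \vdash \forall\PP\,(\1_\PP \Vdash \phi(p))$, which a fortiori gives $T \vdash \exists\PP\,(\1_\PP \Vdash \phi(p))$.

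For the converse, assume $T \vdash \exists\PP\,(\1_\PP \Vdash \phi(p))$ and let $M \vDash T$ be arbitrary; it is enough to find, inside $M$, a set $y$ with $M \vDash \psi(y,p)$. Working in $M$: fix $\PP$ with $\1_\PP \Vdash \phi(p)$ and a regular cardinal $\theta$ large enough that $H(\theta)=H(\theta)^M$ computes the forcing relation of $\PP$ correctly, so that $H(\theta) \vDash \1_\PP \Vdash \phi(p)$. By downward L\"owenheim--Skolem choose a countable $N \prec H(\theta)$ with $\PP,p \in N$, and let $\pi:N \to \bar N$ be the transitive collapse. Since $\omega \subseteq N$, $\pi$ fixes $\omega$ and $p$; with $\bar\PP = \pi(\PP)$, elementarity gives that $\bar N$ is a countable transitive model of enough of $\ZFC$ with $\bar N \vDash \1_{\bar\PP} \Vdash \phi(p)$.

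Still inside $M$, and still with the goal of exhibiting a witness, I would use that $\bar N$ is countable to enumerate its dense subsets of $\bar\PP$ in type $\omega$ and build a $\bar\PP$-generic filter $g$ over $\bar N$. The forcing theorem, applied to the countable transitive model $\bar N$, gives $\bar N[g] \vDash \phi(p)$, hence there is $\bar y \in \bar N[g]$ with $\bar N[g] \vDash \psi(\bar y,p)$. As $\bar N[g]$ is transitive and $\psi$ is $\Delta_0$, upward absoluteness yields $M \vDash \psi(\bar y,p)$, i.e. $M \vDash \phi(p)$. Since $M$ was an arbitrary model of $T$, $T \vdash \phi(p)$.

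The one place where care is needed — the hard part, such as it is — is the assertion that $H(\theta)$, and therefore the collapsed model $\bar N$, reads off "$\1_\PP \Vdash \phi(p)$" the same way $M$ does. This rests on the standard fact that for $\PP \in H(\theta)$ with $\theta$ large and regular one has $H(\theta)^{M[G]} = H(\theta)^M[G]$ and the forcing relation is absolute between $M$ and $H(\theta)^M$; it is arranged by taking $\theta$ above $2^{|\PP|}$. The remaining ingredients — that a countable transitive model admits a generic filter, and that $\Delta_0$ (hence $\Sigma_1$) formulas are upward absolute between transitive $\in$-models — are entirely routine.
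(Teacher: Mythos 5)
Your proof is correct and follows essentially the same route as the paper's: reflect the statement $\1_\PP \Vdash \phi(p)$ down to a set-sized structure, pass to a countable elementary submodel containing $p$ and $\PP$, take the transitive collapse (noting $p \subseteq \omega$ is fixed), build a generic filter over the resulting countable transitive model, and conclude by the forcing theorem, upward absoluteness of $\Sigma_1$ formulas, and completeness. The only (harmless) divergence is at the reflection step: the paper passes to a $V_\theta$ satisfying a suitable finite fragment of $T$ via the Reflection Theorem, whereas you pass to $H(\theta)$ for $\theta > 2^{\vp{\PP}}$ regular and invoke absoluteness of the forcing relation for the $\Sigma_1$ statement, which (as you indicate) rests on $H(\theta)^{M[G]} = H(\theta)^M[G]$ together with Levi absoluteness applied in the generic extension.
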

		\begin{proof}
			The left to right implication is trivial (choosing a poset like $\PP = \2$). For the reverse implication, suppose that $V \vDash \exists \PP ~ (\1_\PP \Vdash \phi(\check p))$, let $\PP$ be any such poset and $\theta$ be such that $p, \PP \in V_\theta$ and $V_\theta$ satisfies a finite fragment of $T$ large enough to prove basic $\ZFC$ and $\1_\PP \Vdash \phi(p)$. Let $M$, $N$ be defined as in the previous lemma (considering $p$ as the parameter, $\PP$ as the variable), then $N \vDash \cp{\1_\QQ \Vdash \phi(p)}$ where $\QQ = \pi(\PP)$. Let $G$ be $N$-generic for $\QQ$, so that $N[G] \vDash \phi(p)$. Since $\phi$ is $\Sigma_1$, $\phi$ is upward absolute for transitive models, hence $V \vDash \phi(p)$. The thesis follows by completeness of first-order logic.
		\end{proof}

		Cohen's Absoluteness Lemma can be generalized to the case $p \subseteq \kappa$ for any cardinal $\kappa$. However, to achieve that we need the following definition.

		\begin{definition}
			We write $\FA_\kappa(\PP)$ as an abbreviation for the sentence ``for every $\mathcal D \subset \pp(\PP)$ family of open dense sets of $\PP$ with $\vp{\mathcal D} \leq \kappa$, there exists a filter $G\subset \PP$ such that $G \cap D \neq \emptyset$ for all $D \in \mathcal{D}$''.
		\end{definition}

		In an informal sense, assuming the forcing axiom for a broad class of posets suggests that a number of different forcing has already been done in our model of set theory. This intuitive insight is reflected into the following equivalence.

		\begin{theorem} \label{fFACharact}
			Let $\PP$ be a poset and $\theta > 2^{\vp{\PP}}$ be a cardinal. Then $\FA_{\kappa}(\PP)$ holds iff there exists an $M \prec H(\theta)$, $\vp{M} = \kappa$, $\PP \in M$, $\kappa \subset M$ and a $G$ filter $M$-generic for $\PP$.
		\end{theorem}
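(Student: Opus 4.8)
The plan is to prove both directions of the equivalence, using the characterization of stationary sets in terms of elementary submodels (every stationary set on $X$ meets the club of elementary submodels of any first-order structure on $X$) to pass between dense sets and clubs.

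For the forward direction, assume $\FA_\kappa(\PP)$ and fix $\theta > 2^{\vp{\PP}}$. First I would build an increasing continuous chain $\ap{M_\alpha : \alpha < \kappa}$ of elementary submodels of $H(\theta)$, each of size $\kappa$, with $\PP \in M_0$, $\kappa \subseteq M_0$, and $M_\alpha \in M_{\alpha+1}$; set $M = \bigcup_{\alpha<\kappa} M_\alpha$. For each $\alpha$, the set $D_\alpha$ of conditions deciding "$M_\alpha$'s next dense set" should be arranged so that the generic filter we get reflects genericity over $M$. More carefully: the collection of open dense subsets of $\PP$ that lie in $M$ has size $\kappa$ (since $\vp{M} = \kappa$ and $\PP \in M$), so $\FA_\kappa(\PP)$ yields a filter $G \subseteq \PP$ meeting every dense $D \in M$ with $D$ open dense on $\PP$; this $G$ is then $M$-generic for $\PP$. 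The only subtlety is ensuring the chain can be built with $\kappa \subseteq M$ and $\vp{M} = \kappa$ simultaneously — this needs $\theta$ large enough and $\kappa^{<\kappa}$-type bookkeeping in the Löwenheim–Skolem construction, but $\theta > 2^{\vp\PP}$ and closing off $\kappa$ many times handles it.

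For the reverse direction, assume there is $M \prec H(\theta)$ with $\vp{M} = \kappa$, $\PP \in M$, $\kappa \subseteq M$, and an $M$-generic filter $G$ for $\PP$. Given a family $\mathcal D = \{D_i : i < \kappa\}$ of open dense subsets of $\PP$, I want to produce a filter meeting each $D_i$. The idea is to code $\mathcal D$ as a single object and reflect it into $M$: since $\vp{\mathcal D} \le \kappa$ and $\kappa \subseteq M$, one can arrange — after possibly re-enumerating — that a name or code for the sequence $\ap{D_i : i<\kappa}$ is an element of $M$ (this is where $\kappa \subseteq M$ is essential, it lets $M$ see the indexing). Then each $D_i$ with $i \in \kappa \subseteq M$ is itself in $M$ and is recognized by $M$ as open dense on $\PP$ (by elementarity, since $H(\theta)$ knows this), so the $M$-genericity of $G$ gives $G \cap D_i \cap M \neq \emptyset$, in particular $G \cap D_i \neq \emptyset$ for all $i < \kappa$. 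Thus $G$ is the required filter and $\FA_\kappa(\PP)$ holds.

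The main obstacle I anticipate is the bookkeeping in the forward direction: one must guarantee that the single application of $\FA_\kappa(\PP)$ to the $\kappa$-sized family $\{D \in M : D \text{ open dense on }\PP\}$ actually suffices for full $M$-genericity, i.e. that $M$ contains \emph{all} the dense sets it needs to "see" — which is immediate once $M \prec H(\theta)$ and $\PP \in M$, since any $D \in M$ that $M$ thinks is open dense really is open dense on $\PP$, and conversely genericity only requires meeting dense sets that are elements of $M$. The elementarity $M \prec H(\theta)$ with $\theta > 2^{\vp\PP}$ ensures $M$ correctly computes open dense subsets of $\PP$ and that $\mathcal P(\PP) \in M$, so no dense set relevant to $M$-genericity is missed. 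The role of the hypothesis $\kappa \subseteq M$ is precisely to line up the index set of an arbitrary $\kappa$-family with elements of $M$ in the reverse direction.
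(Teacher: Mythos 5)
There are two genuine gaps, one in each direction, and together they bypass essentially all of the actual content of the theorem.

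In the forward direction you apply $\FA_\kappa(\PP)$ to the (at most $\kappa$ many) dense sets lying in $M$ and conclude that the resulting filter ``is then $M$-generic.'' But with the paper's definition of $M$-genericity this does not follow: $M$-genericity requires $G \cap D \cap M \neq \emptyset$ for every dense $D \in M$, whereas the forcing axiom only guarantees $G \cap D \neq \emptyset$ --- the witnessing condition may lie entirely outside $M$. This is precisely the difficulty the theorem is about. The paper's proof repairs it by replacing $M$ with the larger model
\[
N = \bp{x \in H(\theta): ~ \exists \tau \in M \cap V^\PP ~ \exists q \in G ~ \cp{q \Vdash \tau = \check{x}}},
\]
and then doing real work (a fullness argument) to show $N \prec H(\theta)$, $\vp{N} = \kappa$, $\kappa \subset N$, and that $G$ meets every dense $D \in N$ \emph{inside} $N$. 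Your write-up never confronts this, and the chain construction you sketch does not address it either.

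In the reverse direction you are handed a single fixed $M$ and you claim that, ``after possibly re-enumerating,'' the given family $\mathcal D = \ap{D_i : i < \kappa}$ can be arranged to be an element of $M$. This is false in general: $M$ has size $\kappa$, while $\PP$ has up to $2^{\vp{\PP}}$ dense subsets, so almost every $\kappa$-family of dense sets has members that are simply not in $M$; having $\kappa \subseteq M$ lets $M$ decode indices of a sequence it already contains, but it cannot conjure the sequence itself into $M$. The correct route --- the one the paper takes --- is to use elementarity of the one given $M$ to show that the set $S$ of models $N \prec H(\vp{\PP}^+)$ of size $\kappa$ with $\kappa \subset N$ admitting an $N$-generic filter is definable, belongs to $M$, and is stationary; stationarity then supplies, for each particular $\mathcal D$, some $N \in S$ with $\mathcal D \in N$, whose generic filter meets every $D_\alpha$. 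Without an argument of this kind (producing many models rather than using the single given one), the reverse implication is not established.
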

		\begin{proof}
			First, suppose that $\FA_{\kappa}(\PP)$ holds and let $M \prec H(\theta)$ be such that $\PP \in M$, $\kappa \subset M$, $\vp{M} = \kappa$. There are at most $\kappa$ dense subsets of $\PP$ in $M$, hence by $\FA_{\kappa}(\PP)$ there is a filter $G$ meeting all those sets. However, $G$ might not be $M$-generic since for some $D \in M$, the intersection $G \cap D$ might be disjoint from $M$. Define:
			\[
			 N = \bp{x \in H(\theta): ~ \exists \tau \in M \cap V^\PP ~ \exists q \in G ~ \cp{q \Vdash \tau = \check{x}}}
			 \]
			 Clearly, $N$ cointains $M$ (hence contains $\kappa$), and the cardinality $\vp{N} \leq \vp{M \cap V^\PP} = \kappa$ since every $\tau$ can be evaluated in an unique way by the elements of the filter $G$. To prove that $N \prec H(\theta)$, let $\exists x \phi(x,a_1,\ldots,a_n)$ be any formula with parameters $a_1,\ldots,a_n \in N$ which holds in $V$. Let $\tau_i \in M^\PP$, $q_i \in G$ be such that $q_i \Vdash \tau_i = \check{a_i}$ for all $i < n$. Define $Q_\phi = \bp{p \in \PP: ~ p \Vdash \exists x \in V ~ \phi(x, \tau_1, \ldots, \tau_n) }$, this set is definable in $M$ hence $Q_\phi \in M$. Furthermore, $Q_\phi \cap G$ is not empty since it contains any $q \in G$ below all $q_i$. By fullness in $H(\theta)$, we have that:
			 \[
			 \begin{array}{l}
				 H(\theta) \vDash \forall p \in Q_\phi ~ p \Vdash \exists x \in V ~ \phi(x, \tau_1, \ldots, \tau_n) \Rightarrow \\
				 H(\theta) \vDash \exists \tau ~ \forall p \in Q_\phi ~ p \Vdash \tau \in V \wedge \phi(\tau, \tau_1, \ldots, \tau_n) \Rightarrow \\
				 M \vDash \exists \tau ~ \forall p \in Q_\phi ~ p \Vdash \tau \in V \wedge \phi(\tau, \tau_1, \ldots, \tau_n)
			\end{array}
			 \]
			 Fix such a $\tau$, by elementarity the last formula holds also in $H(\theta)$ and in particular for $q \in Q_\phi$. Since the set $\bp{ p \in \PP: ~ \exists x \in H(\theta) ~ p \Vdash \check{x} = \tau }$ is an open dense set definable in $M$, there is a $q' \in G$ below $q$ belonging to this dense set, and an $a \in H(\theta)$ such that $q' \Vdash \tau = \check{a}$. Then $q'$, $\tau$ testify that $a \in N$ hence the original formula $\exists x \phi(x,a_1,\ldots,a_n)$ holds in $N$.

			 Finally, we need to check that $G$ is $N$-generic for $\PP$. Let $D \in N$ be a dense subset of $\PP$, and $\dot{D} \in M$ be such that $\1_\PP \Vdash \dot{D} \text{ is dense} \wedge \dot{D} \in V$ and for some $q \in G$, $q \Vdash \dot{D} = D$. Since $\1_\PP \Vdash \dot{D} \cap \dot{G} \neq \emptyset$, by fullness lemma there exists a $\tau \in H(\theta)$ such that $\1_\PP \Vdash \tau \in \dot{D} \cap \dot{G}$, and by elementarity there is such a $\tau$ also in $M$. Let $q' \in G$ below $q$ be deciding the value of $\tau$, $q' \Vdash \tau = \check{p}$. Since $q'$ forces that $\check{p} \in \dot{G}$, it must be $q' \leq p$ so that $p \in G$ hence $p \in G \cap D \cap N$ is not empty.

			For the converse implication, let $M$, $G$ be as in the hypothesis of the theorem, and fix a collection $\mathcal D = \ap{D_\alpha: \alpha < \kappa}$ of dense subsets of $\PP$. Define:
			\[
			S = \bp{N \prec H(\vp{\PP}^+): ~ \kappa \subset N ~ \wedge ~ \vp{N} = \kappa ~ \wedge ~ \exists G \text{ filter } N\text{-generic }}
			\]
			Note that $S$ is definable in $M$ then $S \in M$. Furthermore, since $\PP \in M$ so is $H(\vp{\PP}^+)$ hence $M \cap H(\vp{\PP}^+) \prec H(\vp{\PP}^+)$ and $M \cap H(\vp{\PP}^+)$ is in $S$. Given any $C_f \in M$ club on $H(\vp{\PP}^+)$, since $f \in M$ we have that $M \cap H(\vp{\PP}^+) \in C_f$. Then $V \vDash S \cap C_f \neq \emptyset$ and by elementarity the same holds for $M$. Thus, $S$ is stationary in $M$ and again by elementarity $S$ is stationary also in $V$.

			Let $N \in S$ be such that $\mathcal D \in N$. Since $\kappa \subset N$ and $\mathcal D$ has size $\kappa$, $D_\alpha \in N$ for every $\alpha < \kappa$. Thus, the $N$-generic filter $G$ will meet all dense sets in $\mathcal D$, verifying $\FA_\kappa(\PP)$ for this collection.
		\end{proof}

		\begin{corollary} \label{fFACharactCoroll}
			Let $\PP$ be a poset with $\pp(\PP) \in H(\theta)$. Then $\FA_{\kappa}(\PP)$ holds if and only if there are stationary many $M \prec H(\theta)$ such that $\vp{M} = \kappa$, $\PP \in M$, $\kappa \subset M$ and a $G$ filter $M$-generic for $\PP$.
		\end{corollary}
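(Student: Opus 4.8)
The plan is to verify that the set
\[
S = \bp{M \prec H(\theta): ~ \vp{M} = \kappa,~ \PP \in M,~ \kappa \subseteq M,~ \text{and some filter is } M\text{-generic for } \PP}
\]
is stationary on $\pp(H(\theta))$ precisely when $\FA_{\kappa}(\PP)$ holds; this upgrades the ``there exists $M$'' of Theorem \ref{fFACharact} to ``stationarily many $M$''. I will use throughout that $\pp(\PP) \in H(\theta)$ forces $\PP \subseteq H(\theta)$ and $2^{\vp{\PP}} < \theta$ (so Theorem \ref{fFACharact} is available for this $\theta$), and I will take $\kappa < \theta$. For the direction ``$S$ stationary $\Rightarrow \FA_{\kappa}(\PP)$'' one can simply note that $S \neq \emptyset$ and quote Theorem \ref{fFACharact}; or, self-contained, given a family $\mathcal D = \ap{D_\alpha : \alpha < \kappa}$ of dense open subsets of $\PP$, check that $\mathcal D \in H(\theta)$, use that $\bp{M \prec H(\theta) : \mathcal D \in M}$ contains a club, intersect it with $S$ to obtain a good $M$ with $\mathcal D \in M$ (hence with every $D_\alpha \in M$, since $\kappa \subseteq M$), and read off that the $M$-generic filter witnessing $M \in S$ meets all of the $D_\alpha$.

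The substantive direction is ``$\FA_{\kappa}(\PP) \Rightarrow S$ stationary''. Here I would fix an arbitrary club $C_f$ on $\pp(H(\theta))$, coded by $f : H(\theta)^{<\omega} \to H(\theta)$, and pick a sufficiently large regular cardinal $\Omega$ with $\Omega > 2^{\vp{\PP}}$ and $f, \theta, H(\theta) \in H(\Omega)$. Applying Theorem \ref{fFACharact} at $H(\Omega)$ — and noting that its proof of the forward implication begins with a L\"owenheim--Skolem hull, so one may insist that the model it produces contains any prescribed finite list of parameters — yields $N \prec H(\Omega)$ with $\vp{N} = \kappa$, $\kappa \subseteq N$, $\PP, f, \theta, H(\theta) \in N$, and a filter $G$ that is $N$-generic for $\PP$. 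I would then set $M = N \cap H(\theta)$ and check the four clauses defining $S$, together with $M \in C_f$. First, $M \prec H(\theta)$ by the standard Tarski--Vaught argument from $H(\theta) \in N \prec H(\Omega)$, and $\vp{M} = \kappa$, $\kappa \subseteq M$, $\PP \in M$ are immediate. Next, $M$ is closed under $f$: for $s \in M^{<\omega}$ we have $s \in N$, so $f(s) \in N$ (as $f \in N$) and $f(s) \in H(\theta)$ (as $\ran f \subseteq H(\theta)$), hence $f(s) \in N \cap H(\theta) = M$; thus $M \in C_f$. Finally, $G$ is $M$-generic for $\PP$: if $D \in M$ is dense in $\PP$ then $D \in N$, so $G \cap D \cap N \neq \emptyset$ by $N$-genericity, and the members of $G \cap D$ lie in $\PP \subseteq H(\theta)$ and hence in $N \cap H(\theta) = M$, so $G \cap D \cap M \neq \emptyset$. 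Therefore $M \in S \cap C_f$, and as $C_f$ was an arbitrary club, $S$ is stationary.

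The step I expect to be the crux is the handling of the club function $f$: a club on $\pp(H(\theta))$ is witnessed by a function on $H(\theta)^{<\omega}$ that in general is \emph{not} a member of $H(\theta)$, so one cannot simply invoke Theorem \ref{fFACharact} and demand $f \in M$. The remedy is to reflect through a larger $H(\Omega)$, where $f$ is a legitimate parameter, and then restrict back to $H(\theta)$; this works only because the whole poset $\PP$ is a subset of $H(\theta)$, which is exactly what makes an $N$-generic filter automatically $(N \cap H(\theta))$-generic. The remaining ingredients — the Tarski--Vaught check for $M \prec H(\theta)$, the verification that $\mathcal D \in H(\theta)$, and the elementary-hull bookkeeping — are routine.
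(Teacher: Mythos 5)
Your proof is correct, and on the substantive direction it is more explicit than the paper's. The paper disposes of the corollary in two sentences, asserting that the implication from $\FA_{\kappa}(\PP)$ to the existence of stationarily many suitable $M$ is already contained in the first part of the proof of Theorem \ref{fFACharact}. What that first part literally shows is that \emph{every} $M \prec H(\theta)$ with $\PP \in M$, $\kappa \subseteq M$, $\vp{M}=\kappa$ extends to a model $N$ in the desired set; to upgrade this to stationarity one must steer $N$ into an arbitrary prescribed club $C_f$, and, as you correctly identify, the obstruction is that $f\colon H(\theta)^{<\omega} \to H(\theta)$ is not an element of $H(\theta)$, while the $N$ produced by the theorem's construction properly extends the seed model and so need not inherit closure under $f$. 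Your detour through an $H(\Omega)$ containing $f$ and $H(\theta)$, followed by cutting the resulting $N$ down to $M = N \cap H(\theta)$, fills exactly this gap: the key observation that $\PP \subseteq H(\theta)$ makes $N$-genericity descend to $M$-genericity is right, and the remaining verifications ($M \prec H(\theta)$ by Tarski--Vaught, closure under $f$ via closure of $N$ under finite tuples, the cardinality count) are sound. The alternative the paper seems to have in mind is to rerun the proof of Theorem \ref{fFACharact} over the expanded structure $(H(\theta),\in,f)$, which also works but requires checking that the name-and-fullness argument for $N \prec H(\theta)$ relativizes to the extra function symbol; your version buys a cleaner, self-contained argument at the cost of one extra reflection step. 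The easy direction is handled identically in both.
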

		\begin{proof}
			The forward implication has already been proved in the first part of the proof of the previous Theorem \ref{fFACharact}. The converse implication directly follows from the same theorem.
		\end{proof}

		\begin{lemma}[Generalized Cohen's Absoluteness]
			Let $T$ be any theory extending $\ZFC$, $\kappa$ be a cardinal, $\phi$ be a $\Sigma_1$ formula with a parameter $p$ such that $T \vdash p \subseteq \kappa$. Then $T \vdash \phi(p)$ if and only if $T \vdash \exists \PP ~ \cp{\1_\PP \Vdash \phi(p) ~\wedge~ \FA_\kappa(\PP)}$.
		\end{lemma}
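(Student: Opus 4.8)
The plan is to follow the proof of Cohen's Absoluteness Lemma~\ref{fCohen}, replacing its appeal to the countability of the collapsed model by an appeal to $\FA_\kappa(\PP)$ through the characterization of Theorem~\ref{fFACharact}.

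The forward direction is immediate: if $T \vdash \phi(p)$, take $\PP$ to be the trivial poset. Then $\FA_\kappa(\PP)$ holds vacuously (the only dense subset is $\PP$ itself), and $\1_\PP \Vdash \phi(p)$ since $\Sigma_1$ formulas are upward absolute for transitive models, hence preserved by any forcing, a fortiori by the trivial one.

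For the converse, by the completeness theorem it suffices to show that any model of a sufficiently large finite fragment of $T$ together with the right-hand side satisfies $\phi(p)$; so work inside such a model. Fix a poset $\PP$ with $\1_\PP \Vdash \phi(\check p)$ and with $\FA_\kappa(\PP)$, and fix a cardinal $\theta$ large enough that $\pp(\PP) \in H(\theta)$ and $H(\theta)$ correctly computes the relevant instance of the forcing relation ``$\1_\PP \Vdash \phi(\check p)$''. By Corollary~\ref{fFACharactCoroll}, together with the fact that the elementary submodels of $H(\theta)$ containing $p$ form a club, we may choose $M \prec H(\theta)$ with $\vp{M} = \kappa$, $\kappa \subseteq M$, $\PP, p \in M$, and a filter $G$ that is $M$-generic for $\PP$. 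Let $\pi \colon M \to N$ be the transitive collapse. A routine induction using $\kappa \subseteq M$ shows $\pi(\xi) = \xi$ for every $\xi < \kappa$, whence $\pi(p) = p$ since $p \subseteq \kappa$; also $\pi(\PP) =: \QQ$ is a poset in $N$ and, by elementarity of $M$ in $H(\theta)$ and the isomorphism $\pi$, $N \vDash (\1_\QQ \Vdash \phi(\check p))$.

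The heart of the argument is to push $G$ through the collapse: I claim $\bar G := \pi[G \cap M]$ is a genuine $N$-generic filter for $\QQ$. It is upward closed because $G$ is; and it is directed because, given $q_1, q_2 \in G \cap M$, the set of conditions that are either below both $q_1$ and $q_2$ or incompatible with one of them is a dense subset of $\PP$ definable from parameters in $M$, hence lies in $M$, so $M$-genericity of $G$ produces a common lower bound inside $G \cap M$. For genericity, let $E \in N$ be dense in $\QQ$ and let $D \in M$ with $\pi(D) = E$: then $M \vDash$ ``$D$ is dense in $\PP$'', hence so does $H(\theta)$, hence $D$ really is dense in $\PP$ (density of a subset of $\PP$ being absolute for transitive models containing $\PP$); by $M$-genericity pick $q \in G \cap D \cap M$, so that $\pi(q) \in \bar G \cap E$. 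Now $N[\bar G]$ is a genuine set-forcing extension of the transitive model $N$, so $N \vDash (\1_\QQ \Vdash \phi(\check p))$ gives $N[\bar G] \vDash \phi(p)$; since $N[\bar G]$ is transitive and $\phi$ is $\Sigma_1$, we conclude $\phi(p)$, as desired. I expect the main obstacle to be exactly this claim --- verifying that $\bar G$ is a filter and keeping straight the three notions of density (in $M$, in $H(\theta)$, in the universe) --- together with the bookkeeping guaranteeing $p \in M$ and $\pi(p) = p$, so that the collapsed forcing relation carries the correct parameter.
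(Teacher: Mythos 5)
Your proof is correct and follows essentially the same route as the paper's: the forward direction via the trivial poset, and the converse by invoking Corollary~\ref{fFACharactCoroll} to obtain $M \prec H(\theta)$ with $p \in M$ and an $M$-generic filter, collapsing to a transitive $N$, transferring the filter through the collapse, and concluding by upward absoluteness of $\Sigma_1$ formulas. You merely supply details the paper elides (verifying that $\pi[G \cap M]$ is a genuine $N$-generic filter and that $\pi$ fixes $p$), which is a welcome refinement rather than a different argument.
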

		\begin{proof}
			The forward implication is trivial; the converse implication follows the proof of Lemma \ref{fCohen}. Given $p$, $\PP$ such that $\1_\PP \Vdash \phi(p)$ and $\FA_\kappa(\PP)$ holds, by Corollary \ref{fFACharactCoroll} let $M \prec H(\theta)$ be such that $\vp{M} = \kappa$, $\PP \in M$, $\kappa \subset M$ and there exists a $G$ filter $M$-generic for $\PP$. Since there are stationary many such $M$, we can assume that $p \in M$. Let $\pi: M \rightarrow N$ be the transitive collapse map of $M$, then $H = \pi[G]$ is $N$-generic for $\QQ = \pi[\PP]$ and $p \subseteq \kappa \subseteq M$ is not moved by $\pi$ so that $N[H] \vDash \phi(p)$. Since $\phi$ is $\Sigma_1$, $\phi$ is upward absolute for transitive models, hence $V \vDash \phi(p)$.
		\end{proof}

		It is now clear how the forcing axiom makes forcing a strong tool for proving theorems. For $\kappa = \omega_1$, the forcing axiom $\FA_{\omega_1}(\PP)$ is widely studied for many different poset $\PP$. In particular, for the classes of posets:
		\[
		\mathrm{c.c.c.} ~ \subset ~ \mathrm{proper} ~ \subset ~ \mathrm{semiproper} ~ \subset ~ \mathrm{locally ~~ s.s.p.}
		\]
		the forcing axiom is called respectively $\MA$ (Martin's Axiom), $\PFA$ (Proper Forcing Axiom), $\SPFA$ (Semiproper Forcing Axiom), $\MM$ (Martin's Maximum). In this notes we will be mostly interested in the latter.
	
		\begin{definition}
			A poset $\PP$ is c.c.c. iff every antichain in $\PP$ is countable.
		\end{definition}
	
		\begin{definition}
			A poset $\PP$ is proper iff for every $\theta$ regular cardinal such that $\pp(\PP) \in H(\theta)$, countable elementary substructure $M \prec H(\theta)$ and $p \in \PP \cap M$, there is a condition $q \leq p$ that is $M$-generic (i.e., for every $D \in M$ dense subset of $\PP$ and $r \leq q$, $r$ is compatible with an element of $D \cap M$).
		\end{definition}
		
		Equivalently, a poset $\PP$ is proper iff it preserves stationary sets on $\qp{\lambda}^\omega$ for any $\lambda$ uncountable cardinal.
	
		\begin{definition}
			A poset $\PP$ is semiproper iff for every $\theta$ regular cardinal such that $\pp(\PP) \in H(\theta)$, countable elementary substructure $M \prec H(\theta)$ and $p \in \PP \cap M$, there is a condition $q \leq p$ that is $M$-semigeneric (i.e., for every $\dot{\alpha} \in M$ name for a countable ordinal, $q \Vdash \exists \beta \in M ~ \check{\beta} = \dot{\alpha}$).
		\end{definition}

		Under $\SPFA$ every s.s.p.~poset is semiproper and viceversa, hence $\SPFA$ is equivalent to $\MM$.

		\begin{definition}
			A poset $\PP$ is \emph{stationary set preserving} (in short, s.s.p.) iff for every stationary set $S \subseteq \omega_1$, $\1_\PP \Vdash \forall x \subseteq \check{\omega_1} (x \mathrm{~club~} \Rightarrow x \cap \check S \neq \emptyset)$.
		\end{definition}

		\begin{definition}
			A poset $\PP$ is \emph{locally} s.s.p.~iff there exists a $p \in \PP$ such that $\PP \res p = \bp{q \in \PP: ~ q \leq p}$ is an s.s.p.~poset.
		\end{definition}

		The class of locally s.s.p.~posets play a special role in the development of forcing axioms: $\MM$ is the strongest possible form of forcing axiom for $\omega_1$. This is the case as shown by the following theorem.

		\begin{theorem}[Shelah]
			If $\PP$ is not locally s.s.p.~then $\FA_{\omega_1}(\PP)$ is false.
		\end{theorem}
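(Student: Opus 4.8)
The plan is to assume toward a contradiction that $\PP$ is not locally s.s.p.\ yet $\FA_{\omega_1}(\PP)$ holds. First I would unpack the hypothesis on $\PP$. Since $\PP \res p$ is not s.s.p.\ for \emph{every} $p \in \PP$, for each such $p$ there are $q \leq p$, a stationary $S \subseteq \omega_1$, and a $\PP$-name $\dot f$ with $q \Vdash$ ``$\dot f \colon \check{\omega_1} \to \check{\omega_1}$ and $\check S$ is disjoint from the club $C_{\dot f} = \{\alpha : \dot f[\alpha] \subseteq \alpha\}$ of closure points of $\dot f$'' — I use here that for any $g \colon \omega_1 \to \omega_1$ the set $\{\alpha : g[\alpha] \subseteq \alpha\}$ is a club, and that every club contains one of this form, so such a name exists whenever $q \Vdash \check S$ is non-stationary. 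The conditions $q$ admitting such an $S$ and $\dot f$ are dense; let $A$ be a maximal antichain among them, and for each $a \in A$ fix a witnessing stationary $S_a$ and name $\dot f_a$. By fullness I glue the $\dot f_a$ into a single name $\dot f$ with $\1_\PP \Vdash$ ``$\dot f \colon \check{\omega_1} \to \check{\omega_1}$'' and, for every $a \in A$, $a \Vdash$ ``$\forall \alpha \, (\alpha \in \check{S_a} \to \exists \beta < \alpha \; \dot f(\beta) \geq \alpha)$''.

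Next I would write down $\leq \omega_1$ many open dense sets for $\FA_{\omega_1}(\PP)$. For $\beta < \omega_1$ put $D_\beta = \{ q \in \PP : q \text{ decides } \dot f(\check\beta) \}$; put $E = \bigcup_{a \in A} \PP \res a$; and for $\alpha < \omega_1$ let $D'_\alpha$ consist of those $q$ lying below some $a \in A$ — necessarily unique, since $A$ is an antichain — that moreover satisfy: either $\alpha \notin S_a$, or there is $\beta < \alpha$ with $q \Vdash \dot f(\check\beta) \geq \check\alpha$. Each $D_\beta$ and $E$ is plainly open dense; $D'_\alpha$ is open dense because given any $q$ one first refines into $E$ to get below some $a \in A$, and then, if $\alpha \in S_a$, uses $a \Vdash \exists \beta < \check\alpha\; \dot f(\beta) \geq \check\alpha$ to refine once more and pin down such a $\beta$. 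By $\FA_{\omega_1}(\PP)$ fix a filter $G$ meeting all of $D_\beta, D'_\alpha$ $(\beta,\alpha < \omega_1)$ and $E$.

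Now I would read a genuine function off $G$: set $g(\beta) = \gamma$ iff some $q \in G$ forces $\dot f(\check\beta) = \check\gamma$. Meeting the $D_\beta$ makes $g$ total and $G$ being a filter makes it well-defined, so $g \colon \omega_1 \to \omega_1$ lies in $V$ and $C_g = \{ \alpha < \omega_1 : g[\alpha] \subseteq \alpha \}$ is a club of $V$. Since $G$ meets $E$ there is a unique $a^* \in A$ with an element of $G$ below it; put $S^* = S_{a^*}$, a stationary set. The crux is $C_g \cap S^* = \emptyset$: fix $\alpha \in S^*$ and $q \in G \cap D'_\alpha$; a common lower bound in $G$ of $q$ and of a witness that some element of $G$ is below $a^*$ lies below both the $a \in A$ from the definition of $q \in D'_\alpha$ and below $a^*$, forcing $a = a^*$ by antichain-ness, so $\alpha \in S_a$ and the first clause of $D'_\alpha$ is ruled out; hence there is $\beta < \alpha$ with $q \Vdash \dot f(\check\beta) \geq \check\alpha$, and comparing $q$ within $G$ with the condition that computed $g(\beta)$ gives $g(\beta) \geq \alpha$, i.e.\ $\alpha \notin C_g$. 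Thus $C_g$ is a club of $V$ disjoint from the stationary set $S^*$, which is absurd; so $\FA_{\omega_1}(\PP)$ must fail.

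The step I expect to be the main obstacle is arranging the density and bookkeeping so that it works uniformly across the whole antichain $A$: neither the relevant stationary set $S^*$ nor the condition witnessing $\dot f(\beta) \geq \alpha$ is available before $G$ is chosen, which forces one to phrase $D'_\alpha$ in terms of ``the unique member of $A$ above the given condition'' rather than in terms of a fixed $a^*$, and then to recover $a = a^*$ after the fact from compatibility inside $G$. Two minor points also need care: the gluing of the $\dot f_a$ to a single total name, which is exactly where maximality of $A$ enters, and the elementary observations that closure-point sets are clubs and that failure of stationary-set preservation below $p$ is witnessed by a name $\dot f$ of the above form. (One could instead route the argument through the characterization of $\FA_{\omega_1}(\PP)$ by $M$-generic filters in Theorem \ref{fFACharact}, but the bare definition of $\FA_{\omega_1}$ already suffices.)
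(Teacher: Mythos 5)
Your proof is correct. It follows the same overall strategy as the paper's: use the filter supplied by $\FA_{\omega_1}(\PP)$ to decode a genuine club of $V$ that is forced to avoid a stationary set, and contradict stationarity in $V$. There are two differences worth recording. First, the paper's proof opens by fixing a single stationary $S$ and a single name $\dot C$ with $\1_\PP \Vdash \check S \cap \dot C = \check\emptyset$; but ``not locally s.s.p.'' only gives, below each condition, \emph{some} witnessing pair, and the stationary set may vary from condition to condition (think of a lottery sum of two posets each killing a different stationary set). Your maximal antichain $A$ together with the mixed name $\dot f$ and the dense sets $D'_\alpha$, phrased in terms of ``the unique member of $A$ above the given condition,'' is exactly the bookkeeping needed to make the argument uniform, with the relevant stationary set $S_{a^*}$ only identified after the filter is chosen; this is more careful than the paper's write-up on precisely the point where ``locally'' matters. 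Second, you replace the club name $\dot C$ by a name $\dot f$ for a function and read off its set of closure points, whereas the paper decodes $\dot C$ directly via the dense sets $D_\alpha$, $E_\beta$, $F_\gamma$ and then has to verify separately that the decoded set is closed and unbounded; your route gets the club in $V$ for free once $\dot f$ is fully decided, at the cost of the (routine) preliminary observation that every club contains the closure points of some function. Both versions are sound; yours is the more robust rendering of the same idea.
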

		\begin{proof}
			Given $\PP$ that is not locally s.s.p.~let $S$ be a stationary set on $\omega_1$ and $\dot C \in V^\PP$ be such that $\1_\PP \Vdash \dot{C} \subseteq \check{\omega_1} \mathrm{~club}$, $\1_\PP \Vdash \check{S} \cap \dot{C} = \check{\emptyset}$. Define:
			\[
			\begin{array}{ccl}
				D_\alpha &=& \bp{p \in \PP: ~ p \Vdash \check{\alpha} \in \dot{C} \vee p \Vdash \check{\alpha} \notin \dot{C} } \\
				E_\beta &=& \bp{p \in \PP: ~ p \Vdash \check{\beta} \notin \dot{C} \Rightarrow \exists \gamma < \beta ~ p \Vdash \dot{C} \cap \check{\beta} \subseteq \check{\gamma} } \\
				F_\gamma &=& \bp{p \in \PP: ~ \exists \alpha > \gamma ~ p \Vdash \check{\alpha} \in \dot{C} }
			\end{array}
			\]
			Those sets are dense by the forcing theorem, since $\dot{C}$ is forced to be a club and the above formulas are true for clubs (hence forced by a dense set of conditions). Suppose by contradiction that $\FA_{\omega_1}(\PP)$ holds, and let $G$ be a filter that intersects all the $D_\alpha$, $E_\beta$, $F_\gamma$. Then the set $C = \bp{\alpha < \omega_1: ~ \exists p \in G ~ p \Vdash \alpha \in \dot{C} }$ is a club in $V$, so there is a $\beta \in S \cap C$. By definition of $C$, there exists a condition $q \in G$ such that $q \Vdash \beta \in \dot{C}$, and $\beta \in S \Rightarrow q \Vdash \beta \in \check{S} \cap \dot{C} \neq \check{\emptyset}$, a contradiction.
		\end{proof}

%%%%%%%%%%%%%%%%%%%%%%%%%%%%%%%%%%%%%%%%%%%%%%%%%%%%%%%%%%%%%%%%%%%%%%%%%%%%%%%%%%%%%%%%%%%%%%%%%%%%%%%%

	\subsection{More on forcing axioms}

		In this section we will state a few interesting results without proof, not directly involved in the development of $\MM$ and $\SRP$. Reference texts for this section are \cite{Viale}, \cite{Viale2}. Cohen's Absoluteness Lemma \ref{fCohen} is a valuable result, but is limiting in two aspects. First, it involves only $\Sigma_1$ formulas, and second, forces the parameter to be a subset of $\omega$ (or of larger cardinals, assuming stronger and stronger versions of forcing axioms). The following Woodin's Absoluteness Lemma, with an additional assumption on large cardinals, enhances Cohen's result to any formula relativized to $L(\R)$.

		\begin{theorem}[Woodin's Absoluteness]
			Let $T$ be a theory extending $\ZFC ~ +$ there are class many Woodin cardinals. Let $\phi$ be any formula with a parameter $p$ such that $T \vdash p \subseteq \omega$. Then $T \vdash \phi(p)^{L(\R)}$ if and only if $T \vdash \exists \PP ~ (\1_\PP \Vdash \phi(\check p)^{L(\R)})$.
		\end{theorem}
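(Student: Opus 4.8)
The forward implication is trivial, as for Lemma \ref{fCohen}: if $T \vdash \phi(p)^{L(\R)}$, then $\exists \PP\,\cp{\1_\PP \Vdash \phi(\check p)^{L(\R)}}$ holds with $\PP = \2$, since the trivial forcing changes neither $p$ nor $L(\R)$. For the converse the plan is to adapt the proof of Lemma \ref{fCohen}. The bulk of the argument is soft --- reflection to a suitable $V_\theta$, a countable elementary submodel, its transitive collapse, an $N$-generic filter; the one place where the large cardinal hypothesis is genuinely used is a transfer step that replaces the ``$\Sigma_1$ is upward absolute'' move of Lemma \ref{fCohen}, and this is handled through absolutely complementing tree representations of $L(\R)$-definable sets, which are available under a proper class of Woodin cardinals.

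The input is the following theorem of Martin--Steel and Woodin: a proper class of Woodin cardinals implies that every set of reals $A$ definable over $L(\R)$ from a real parameter admits trees $U, W$ with $p[U] = A$, $p[W] = \R \setminus A$, that remain complementary in every set forcing extension (i.e.\ $A$ is universally Baire), and such that moreover $p[U]$ continues to be the set of reals satisfying, in $L(\R)$, the formula defining $A$ --- this last clause being Woodin's generic absoluteness for the theory of $L(\R)$. I would apply it to $A_\phi = \bp{q \subseteq \omega : L(\R) \vDash \phi(q)}$, obtaining trees $U, W$.

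Now suppose $T \vdash \exists \PP\,\cp{\1_\PP \Vdash \phi(\check p)^{L(\R)}}$ and let $V$ be any model of $T$. Fix $\theta$ so that $V_\theta$ contains $p, U, W$ and satisfies a finite fragment $T_0$ of $T$ strong enough to prove basic $\ZFC$ and the sentence $\exists \PP\,\cp{\1_\PP \Vdash \phi(\check p)^{L(\R)}}$. Let $M \prec V_\theta$ be countable with $p, U, W \in M$, and let $\pi\colon M \to N$ be the Mostowski collapse; $p$, being a subset of $\omega \subseteq M$, is not moved by $\pi$, and likewise no real of $M$ is moved. Then $N$ is a countable transitive model of $T_0$, so it contains a poset $\QQ = \pi(\PP)$ with $N \vDash \cp{\1_\QQ \Vdash \phi(\check p)^{L(\R)}}$; since $N$ is countable there is an $N$-generic filter $G$ for $\QQ$, so $N[G] \vDash \phi(p)^{L(\R)}$. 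In Lemma \ref{fCohen} this would now transfer to $V$ by upward absoluteness of $\Sigma_1$ formulas, but $\phi(p)^{L(\R)}$ has no such absoluteness; instead I would chase the trees. By elementarity the defining facts about $\pi(U), \pi(W)$ hold in $N$, and for every real $q$ of $N$ one has $q \in p[U]^V$ if and only if $q \in p[\pi(U)]^N$. Since $N[G] \vDash \phi(p)^{L(\R)}$ and, by the theorem applied inside $N$, $p[\pi(U)]$ still captures $\phi$-truth of $L(\R)$ after forcing with $\QQ$ over $N$, the section $\pi(U)_p$ is illfounded and $\pi(W)_p$ is wellfounded in $N[G]$; wellfoundedness is downward absolute, so $\pi(W)_p$ is wellfounded already in $N$, whence by complementarity in $N$ the section $\pi(U)_p$ is illfounded in $N$, i.e.\ $p \in p[\pi(U)]^N$, i.e.\ $p \in p[U]^V = A_\phi$. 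Hence $L(\R)^V \vDash \phi(p)$, i.e.\ $\phi(p)^{L(\R)}$ holds in $V$; as $V \vDash T$ was arbitrary, $T \vdash \phi(p)^{L(\R)}$ by completeness of first-order logic.

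The main obstacle is the input theorem. Building, from a proper class of Woodin cardinals, absolutely complementing tree representations of the $L(\R)$-definable sets of reals that moreover capture their defining formula in all set forcing extensions --- equivalently, proving the generic absoluteness of the full first-order theory of $L(\R)$ with real parameters --- is the genuinely large-cardinal part: it rests on the Martin--Steel construction of homogeneously Suslin trees from towers of normal measures below a Woodin cardinal, on Woodin's analysis of $L(\R)$ under $\AD^+$, and on the fact that homogeneously Suslin sets are universally Baire. Everything else, as in Lemma \ref{fCohen}, is routine forcing and reflection bookkeeping.
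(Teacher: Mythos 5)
The paper does not prove this theorem: it appears in the subsection whose results are explicitly ``stated without proof,'' so there is no in-paper argument to measure you against. Judged on its own terms, your proposal is sound as \emph{bookkeeping} --- the reflection to $V_\theta$, the countable $M \prec V_\theta$, the collapse fixing reals, the $N$-generic $G$, and the tree-chasing via wellfoundedness absoluteness are all correct --- but it is circular at the one step that matters. The ``input theorem'' you invoke asserts, as its final clause, that $p[U]$ continues to compute $\bp{q \subseteq \omega : ~ L(\R) \vDash \phi(q)}$ in every set-generic extension, and you yourself identify that clause as ``Woodin's generic absoluteness for the theory of $L(\R)$'' --- which is exactly the content of the statement to be proved. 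What you have written is therefore a (correct) derivation of the syntactic, provability-theoretic formulation from the semantic tree-representation formulation, not a proof of the theorem; all of the large-cardinal content (Martin--Steel homogeneously Suslin representations, the derived-model/stationary-tower analysis needed to see that the trees capture the $L(\R)$-truth of $\phi$ generically and not merely that they remain complementing) is delegated to the black box. That is a legitimate thing to do in lecture notes --- it is in effect what the paper does by omitting the proof --- but it should be presented as a reduction, not as a proof.

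Two smaller points. First, once you grant absolutely complementing trees $U, W$ that capture $\phi$ and $\neg\phi$ in all set-generic extensions, the countable-submodel detour is unnecessary: for $p \in V$, whether the section $U_p$ is illfounded and $W_p$ wellfounded (or vice versa) is decided in $V$ and is absolute to $V^\PP$, since a rank function witnessing wellfoundedness persists and a branch witnessing illfoundedness persists; hence $\1_\PP \Vdash \phi(\check p)^{L(\R)}$ iff $p \in p[U]$ iff $\phi(p)^{L(\R)}$, and the whole argument collapses to three lines. Second, the step ``by the theorem applied inside $N$'' is not justified as written, since you have not arranged for $N$ to satisfy the hypothesis of the input theorem (e.g.\ by taking $\theta$ a limit of Woodin cardinals); but it is also redundant, because the elementarity transfer of the first-order facts about $U, W$ from $V_\theta$ to $N$, which you state in the preceding sentence, already gives everything you use.
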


		We would expect to generalize Woodin's result from $L(\R) = L(\pp(\omega))$ to some bigger class by means of forcing axioms, as we did with Cohen's. This happens to be possible, at least for $L(\qp{\ON}^{<\omega_2})$, by a result of Viale.
		To state it we need to introduce some common variations of the forcing axiom.

		\begin{definition}
			We write $\BFA_\kappa(\B)$ as an abbreviation for the sentence ``for every $\mathcal D \subset \qp{\B}^{\leq \kappa}$ family of predense sets of $\B$ with $\vp{\mathcal D} \leq \kappa$, there exists a filter $G\subset \B$ such that $G \cap D \neq \emptyset$ for all $D \in \mathcal{D}$''. If $\PP$ is a poset, we write $\BFA_\kappa(\PP)$ to mean $\BFA_\kappa(\B)$ for $\B$ the regular open algebra of $\PP$.
		\end{definition}

		The bounded forcing axiom $\BFA_\kappa(\PP)$ can be used to define weaker versions of the usual forcing axioms: $\BMA$, $\BPFA$, $\BMM$. Furthermore, $\BFA_\kappa(\PP)$ has an interesting equivalent formulation in terms of elementary substructures: namely, $\BFA_\kappa(\PP)$ holds if and only if $H(\kappa^+) \prec_1 V^\PP$.

		\begin{definition}
			We write $\FA^{++}_{\omega_1}(\PP)$ as an abbreviation for the sentence ``for every $\mathcal D \subset \pp(\PP)$ family of open dense sets of $\PP$ with $\vp{\mathcal D} \leq \omega_1$, there exists a filter $G\subset \PP$ such that $G \cap D \neq \emptyset$ for all $D \in \mathcal{D}$ and $\val_G(\dot{S})$ is stationary for every $\dot{S} \in V^\PP$ such that $\1_\PP \Vdash \dot{S} \subseteq \omega_1 \text{ stationary}$''.
		\end{definition}

		The forcing axiom $\FA^{++}_{\omega_1}(\PP)$ can be used to define analogous versions of the usual forcing axioms: $\MA^{++}$, $\PFA^{++}$, $\MM^{++}$. It is also possible to find an equivalent formulation of $\FA^{++}_{\omega_1}(\PP)$ similar to Theorem \ref{fFACharact}.

		While $\MA^{++}$ is provably equivalent to $\MA$, $\MM^{++}$ is an actual strengthening of $\MM$. These axioms also have distinct consistency strengths: for example, $\BPFA$ and $\BSPFA^{++}$ are consistent relative to a reflecting cardinal, while $\BMM$ is consistent relative to $\omega$-many Woodin cardinals, and $\MM^{++}$ is consistent relative to a supercompact cardinal.

		\begin{theorem}
			Let $\PP$ be a poset with $\pp(\PP) \in H(\theta)$. Then $\FA^{++}_{\omega_1}(\PP)$ holds if and only if there exists an $M \prec H(\theta)$, $\vp{M} = \omega_1$, $\PP \in M$, $\omega_1 \subset M$ and a $G$ filter $M$-generic for $\PP$ such that for every $\dot{S} \in V^\PP \cap M$ name for a stationary subset of $\omega_1$, $\val_G(\dot{S})$ is stationary.
		\end{theorem}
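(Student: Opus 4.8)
The plan is to follow the proof of Theorem~\ref{fFACharact} almost verbatim, adding the bookkeeping needed for $\PP$-names of stationary subsets of $\omega_1$. For the forward direction, assume $\FA^{++}_{\omega_1}(\PP)$ and fix any $M_0\prec H(\theta)$ with $\vp{M_0}=\omega_1$, $\omega_1\subseteq M_0$ and $\PP\in M_0$. There are at most $\omega_1$ open dense subsets of $\PP$ in $M_0$, so $\FA^{++}_{\omega_1}(\PP)$ yields a filter $G$ meeting all of them and with $\val_G(\dot S)$ stationary for \emph{every} $\PP$-name $\dot S$ such that $\1_\PP\Vdash\dot S\subseteq\omega_1$ is stationary. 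Now run the first half of the proof of Theorem~\ref{fFACharact} with $(M_0,G)$ in the role of $(M,G)$ there: setting $M=\bp{x\in H(\theta):\ \exists\tau\in M_0\cap V^\PP\ \exists q\in G\ (q\Vdash\tau=\check x)}$ one obtains $M_0\subseteq M\prec H(\theta)$ with $\vp{M}=\omega_1$, $\omega_1\subseteq M$, $\PP\in M$, and $G$ an $M$-generic filter for $\PP$. Finally, if $\dot S\in V^\PP\cap M$ satisfies $\1_\PP\Vdash\dot S\subseteq\omega_1$ stationary, then $\val_G(\dot S)$ is stationary by the choice of $G$ --- which was made correct about all such names, not merely those already in $M_0$. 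So $M$ witnesses the right-hand side.

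For the converse, fix $M$ and $G$ as in the statement together with an arbitrary family $\mathcal D=\ap{D_\alpha:\alpha<\omega_1}$ of open dense subsets of $\PP$; the goal is a filter meeting every $D_\alpha$ that is correct about stationary names. Fix a regular $\theta'$ definable from $\PP$, large enough that $\pp(\PP),\omega_1\in H(\theta')$ and every $\PP$-name for a subset of $\omega_1$ is equivalent to one in $H(\theta')$ (and take $\theta$ large enough accordingly). Mimicking the converse of Theorem~\ref{fFACharact}, put
\[ S=\bp{N\prec H(\theta'):\ \omega_1\subseteq N,\ \vp{N}=\omega_1,\ \exists g\ \text{filter }N\text{-generic for }\PP\text{ such that }\val_g(\dot T)\text{ is stationary for every }\dot T\in V^\PP\cap N\text{ with }\1_\PP\Vdash\dot T\subseteq\omega_1\text{ stationary}}. \]
Then $S$ is definable from $\PP$ and $\omega_1$, hence coded in $M$, and $M\cap H(\theta')\in S$ as witnessed by $G$ itself: $G$ is $(M\cap H(\theta'))$-generic because every dense subset of $\PP$ automatically lies in $H(\theta')$, and $\val_G(\dot T)$ is stationary for every stationary name $\dot T\in M$ by the hypothesis on $G$. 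Exactly as in Theorem~\ref{fFACharact}, $M\cap H(\theta')\in S\cap C_f$ for every $f\in M$, so by elementarity $S$ is stationary in $M$, hence in $V$. Choosing $N\in S$ with $\mathcal D\in N$ (the $N$ containing $\mathcal D$ form a club) and letting $g$ witness $N\in S$, the filter $g$ meets every $D_\alpha$ --- all of them lie in $N$, since $\omega_1\subseteq N$ and $\vp{\mathcal D}\leq\omega_1$ --- and is correct about every stationary name lying in $N$.

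The one point that goes beyond a mechanical transcription of Theorem~\ref{fFACharact}, and where I expect the actual work to lie, is the last clause: a submodel $N$ of size $\omega_1$ cannot contain \emph{all} the $\PP$-names that need to be certified, and $g$ need not decide names outside $N$. Building stationary-correctness into the definition of $S$ disposes of half of this; to conclude one must argue that $N$-genericity together with correctness about the stationary names of $N$ propagates to correctness about every stationary name --- which I would try to get from the fact that such a name is coded by an object of size $\leq\vp{\PP}\cdot\omega_1$ together with the elementarity of $N$ in $H(\theta')$. Under the more common formulation of $\FA^{++}_{\omega_1}(\PP)$, in which one is handed, along with $\mathcal D$, a \emph{prescribed} family of at most $\omega_1$ stationary names to certify, the difficulty evaporates: one then simply also requires that family to belong to the chosen $N\in S$.
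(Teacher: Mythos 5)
The paper states this theorem in its ``results without proof'' section, so there is no proof of record to compare against; your strategy of porting Theorem \ref{fFACharact} is exactly the one the paper gestures at, and your forward direction and the stationarity argument for your set $S$ in the converse are fine as far as they go. The problem is the gap you yourself flag at the end, which is genuine and which your proposed repair does not close: elementarity of $N$ in $H(\theta')$ does not put an arbitrary name $\dot S$ with $\1_\PP \Vdash \dot S \subseteq \omega_1$ stationary \emph{inside} $N$, and $N$-genericity of $g$ only guarantees that $g$ meets (pre)dense sets lying in $N$. For a name $\dot S \notin N$, say the nice name determined by antichains $A_\alpha = \bp{p : p \Vdash \check\alpha \in \dot S}$, nothing forces $g$ to meet any $A_\alpha$, so $\val_g(\dot S)$ can be empty. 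Worse, under the paper's literal definition of $\FA^{++}_{\omega_1}(\PP)$ (in which the quantifier over names sits inside ``there exists $G$'') the left-hand side is unsatisfiable for any nontrivial poset: given a maximal antichain $\bp{p_n : n < \omega}$ and pairwise disjoint stationary sets $T_n$, the name $\dot S = \bp{(\check\beta, p_n) : \beta \in T_n}$ is forced to be stationary, yet a filter meets at most one $p_n$ and possibly none, so certifying every such $\dot S$ forces $G$ to meet every countable maximal antichain --- impossible for, say, Cohen forcing. So the equivalence cannot be proved as literally stated, and no transcription of Theorem \ref{fFACharact} will fix that.

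The correct conclusion is the one in your last sentence: the definition must be read with the family of stationary names prescribed as part of the input alongside $\mathcal D$ (this is the standard formulation of $\FA^{++}$), and under that reading your converse is complete once you require the prescribed family of names, like $\mathcal D$, to belong to the chosen $N \in S$. Be aware, though, that under this corrected reading your forward direction also acquires a gap symmetric to the one in Theorem \ref{fFACharact}: when you invoke $\FA^{++}_{\omega_1}(\PP)$ you can only prescribe the stationary names lying in $M_0$, whereas the enlarged model $M$ contains further names $\dot S \in M \setminus M_0$ that the theorem's right-hand side obliges $G$ to certify. Just as the paper proves $M$-genericity of $G$ by pulling each dense $D \in M$ back to a name $\dot D \in M_0$ and using fullness, you need to replace each stationary name $\dot S \in M$ by a name in $M_0$ with the same $G$-evaluation (mixing over the conditions in $G$ deciding $\tau = \check{\dot S}$) before the certification hypothesis applies. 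This is routine but it is an actual step, not a verbatim copy.
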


		We are now ready to state the concluding results of this section, generalizations of Woodin's Absoluteness Lemma.

		\begin{theorem}[Viale]
			Let $T$ be a theory extending $\ZFC ~ + \MM^{++} + $ there are class many Woodin cardinals. Let $\phi$ be any $\Sigma_2$ formula with a parameter $p$ such that $T \vdash p \in H(\omega_2)$. Then $T \vdash \phi(p)^{H(\omega_2)}$ iff $T \vdash \exists \PP \in \mathrm{SSP} ~~ \1_\PP \Vdash \cp{\phi(\check p)^{H(\omega_2)} \wedge \BMM}$.
		\end{theorem}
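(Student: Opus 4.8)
The forward direction is immediate: if $T \vdash \phi(p)^{H(\omega_2)}$, take $\PP$ to be the trivial poset, which is s.s.p.; since $T \vdash \MM^{++}$ and hence $T \vdash \BMM$, and the trivial forcing changes nothing, $T \vdash \1_\PP \Vdash \cp{\phi(\check p)^{H(\omega_2)} \wedge \BMM}$. For the converse, by the completeness theorem it is enough to fix a model $V \vDash T$ in which some $\PP \in \mathrm{SSP}$ satisfies $\1_\PP \Vdash \cp{\phi(\check p)^{H(\omega_2)} \wedge \BMM}$ and to deduce $\phi(p)^{H(\omega_2)}$ in $V$. Writing $\phi \equiv \exists a ~ \theta(a, \cdot)$ with $\theta$ a $\Pi_1$ formula, Levy's Absoluteness $H(\omega_2) \prec_1 V$ --- which holds in $V$ and in every forcing extension of it --- makes $\theta$ absolute between $H(\omega_2)$ and the surrounding universe, so there $\phi(q)^{H(\omega_2)}$ is equivalent to $\exists a \in H(\omega_2) ~ \theta(a, q)$. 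The task is therefore a genuine $\Sigma_2$-reflection: out of a witness $a \in H(\omega_2)^{V^\PP}$ for $\theta(a, p)$ we must produce a witness already in $H(\omega_2)^V$ --- and the $\BMM$-style principles available, which only yield $\Sigma_1$-elementarity of $H(\omega_2)$ into forcing extensions, are not enough by themselves.

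The plan is to run this reflection through Viale's theory of category forcings. First I would fix a Woodin cardinal $\delta$ with $\PP \in V_\delta$ --- this is where arbitrarily large Woodins enter, one for each $\PP$ --- and form the poset $\mathbb{U} = \mathbb{U}^\delta_{\mathrm{SSP}}$ whose conditions are the s.s.p.\ posets lying in $V_\delta$, ordered by a separative version of ``absorbs with stationary-set-preserving quotient'', so that passing below a condition corresponds to taking an s.s.p.\ extension. The two hypotheses of the theorem are consumed precisely here: class many Woodin cardinals are used --- via stationary tower generic embeddings --- to show that $\mathbb{U}$ is well behaved and that forcing with it behaves like a direct limit of the s.s.p.\ extensions of $V$, while $\MM^{++}$ is used to show that $\mathbb{U}$ is itself s.s.p.\ and that $\1_{\mathbb{U}} \Vdash \MM$, a fortiori $\1_{\mathbb{U}} \Vdash \BMM$. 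Since $\PP$ is (equivalent to) a condition of $\mathbb{U}$, forcing with $\mathbb{U}$ below $\PP$ adjoins a $\PP$-generic filter $G$ over $V$; writing $W$ for the resulting generic extension of $V$ one has $V \subseteq V[G] \subseteq W$ and $W \vDash \BMM$.

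With $W$ at hand the argument closes in two moves. For the first, $\phi(p)^{H(\omega_2)}$ lifts from $V[G]$ to $W$: pick in $V[G]$ a witness $a \in H(\omega_2)^{V[G]}$ for $\theta(a, p)$; since $\BMM$ holds in $V[G]$ while the quotient $\mathbb{U}/\PP$ is s.s.p.\ there, one obtains $H(\omega_2)^{V[G]} \prec_1 W$, so the $\Pi_1$ statement $\theta(a, p)$ holds in $W$, and $a \in H(\omega_2)^{V[G]} \subseteq H(\omega_2)^W$ since $\omega_1$ is preserved; thus $W \vDash \exists a \in H(\omega_2) ~ \theta(a, p)$. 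For the second --- the crux --- I would invoke the defining feature of the category forcing, that $H(\omega_2)^V \prec_2 H(\omega_2)^W$: any $\Sigma_2$ fact about $H(\omega_2)^W$ is forced by some condition of $\mathbb{U}$, i.e.\ by an s.s.p.\ poset in the generic filter, and $\MM^{++}$ in $V$ makes $V$ already ``see'' all such posets, so that $H(\omega_2)^V$ sits $\Sigma_2$-elementarily inside the limit $H(\omega_2)^W$. Combining the two moves, the $\Sigma_2$ sentence $\exists a \in H(\omega_2) ~ \theta(a, p)$, with parameter $p \in H(\omega_2)^V$, reflects from $H(\omega_2)^W$ down to $H(\omega_2)^V$; that is exactly $\phi(p)^{H(\omega_2)}$ in $V$, and the theorem follows by completeness.

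The main obstacle lies entirely in the second move: proving that $\mathbb{U}$ is s.s.p., that it forces $\BMM$, and above all that it gives $H(\omega_2)^V \prec_2 H(\omega_2)^W$ and not merely $\prec_1$. This is the technical heart of the category-forcing theory, and it is there --- not in the soft manipulations with Levy's Absoluteness and the Generalized Cohen's Absoluteness Lemma --- that the full strength of $\MM^{++}$ (well past $\BMM$) and of class many Woodin cardinals is genuinely needed. The overall architecture parallels that of Woodin's Absoluteness Lemma, with $\mathbb{U}^\delta_{\mathrm{SSP}}$ playing the role of the stationary tower $\PP_{<\delta}$ and $\MM^{++}$ that of $\AD^{L(\R)}$.
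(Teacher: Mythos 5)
This theorem sits in the subsection \emph{More on forcing axioms}, which the authors explicitly introduce as a list of results ``stated without proof'' and referred to the preprints [Viale], [Viale2]; so there is no in-paper argument to compare yours against. Judged on its own terms, your outline does reproduce the architecture of the intended proof from the cited source: the forward direction via the trivial poset is fine; Bagaria's characterization $\BMM \Leftrightarrow H(\omega_2) \prec_1 V^{\PP}$ (which the paper records just above the theorem) correctly handles the lift of the $\Sigma_2$ fact from $V[G]$ to the larger model $W$, since a $\Pi_1$ statement about a witness $a \in H(\omega_2)^{V[G]}$ transfers along $\prec_1$; and the identification of the real content as a $\Sigma_2$-reflection that $\BMM$ alone cannot deliver, to be supplied by the category forcing $\mathbb{U}^{\delta}_{\mathrm{SSP}}$ of s.s.p.\ posets in $V_\delta$ with $\delta$ Woodin, is exactly where Viale's argument lives.

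That said, as a proof your text has a genuine gap rather than merely an omitted computation: everything that makes the theorem true is concentrated in the three assertions you flag but do not establish --- that $\mathbb{U}^{\delta}_{\mathrm{SSP}}$ is itself s.s.p., that it forces $\BMM$ (via $\MM^{++}$ and the density of presaturated towers below any condition), and above all that $H(\omega_2)^V \prec_2 H(\omega_2)^W$ for the resulting extension $W$. The last of these is not a routine strengthening of the $\prec_1$ statement; it is the main theorem of the cited preprint, and its proof requires the generic elementary embeddings coming from the stationary towers at the Woodin cardinals together with the reformulation of $\MM^{++}$ as a density statement in the category $\mathbb{U}$. Without at least a precise statement of that lemma and an indication of why $\MM^{++}$ (as opposed to $\MM$ or $\BMM$) and class many Woodins suffice for it, the argument reduces to ``the theorem follows from the theorem it is quoted from.'' If you want a self-contained write-up, the piece to supply is the proof that the class of presaturated normal towers is dense in $\mathbb{U}^{\delta}_{\mathrm{SSP}}$ under $\MM^{++}$, and the derivation of $\Sigma_2$-elementarity of $H(\omega_2)$ from that density.
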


		\begin{theorem}[Viale]
			Let $T$ be a theory extending $\ZFC ~ + \MM^{+++} + $ there are class many supercompact cardinals limit of supercompact cardinals. Let $\phi$ be any formula with a parameter $p$ such that $T \vdash p \in H(\omega_2)$. Then $T \vdash \phi(p)^{L(\qp{\ON}^{\omega_1})}$ if and only if $T \vdash \exists \PP \in \mathrm{SSP} ~~ \1_\PP \Vdash \cp{\phi(\check p)^{L(\qp{\ON}^{\omega_1})} \wedge \MM^{+++}}$.
		\end{theorem}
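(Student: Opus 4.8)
The ``only if'' direction is immediate, so I will concentrate on the converse. First I would note that, by completeness of first-order logic, it suffices to fix a model $V \vDash T$ together with a poset $\PP \in \mathrm{SSP}$ of $V$ such that $\1_\PP \Vdash \phi(\check p)^{L(\qp{\ON}^{\omega_1})} \wedge \MM^{+++}$ (such a $\PP$ exists because $T$ proves the right-hand side), and to deduce $\phi(p)^{L(\qp{\ON}^{\omega_1})}$ in $V$. The ``only if'' direction itself is witnessed by the trivial two-element boolean algebra $\2$, which is vacuously SSP and whose top condition forces exactly the sentences true in $V$.

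The engine of the converse is Viale's theory of \emph{category forcings}. The plan is to work with the class forcing $\mathbb{U}_{\mathrm{SSP}}$ whose conditions are SSP posets, where $\QQ \leq \PP$ is declared to hold iff $\PP$ embeds into $\QQ$ as a complete suborder by a \emph{correct} embedding --- one whose quotient $\QQ / \PP$ is forced to stay SSP. Using the proper class of supercompact cardinals that are limits of supercompacts I would establish the basic structure theory of $\mathbb{U}_{\mathrm{SSP}}$: (i) the \emph{totally rigid} conditions --- those into which any fixed $\PP$ admits at most one correct embedding --- are dense; (ii) densely many SSP posets are \emph{freezeable}, i.e.\ can be forced by a further SSP poset to become totally rigid; and (iii) below a totally rigid condition, $\mathbb{U}_{\mathrm{SSP}}$ is $<\delta$-presaturated for every supercompact $\delta$, hence preserves $\omega_2$ and leaves $H(\omega_2)$ essentially unchanged.

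The decisive step is to read $\MM^{+++}$ as a \emph{self-genericity} principle: $V \vDash \MM^{+++}$ exactly when, below densely many totally rigid conditions $\QQ$, the model $V$ is itself $\mathbb{U}_{\mathrm{SSP}} \res \QQ$-generic over a suitable definable inner model, so that $L(\qp{\ON}^{\omega_1})$ as computed in $V$ already equals $L(\qp{\ON}^{\omega_1})$ as computed in the corresponding $\mathbb{U}_{\mathrm{SSP}}$-generic extension. Since $\mathbb{U}_{\mathrm{SSP}}$ absorbs every SSP poset as a condition, I would argue that forcing $\mathbb{U}_{\mathrm{SSP}}$ over $V$ and forcing $\PP$ followed by $\mathbb{U}_{\mathrm{SSP}}$ over $V^\PP$ yield the same $L(\qp{\ON}^{\omega_1})$-theory with parameters in $H(\omega_2)^V$. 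Then, applying self-genericity in $V$ (valid by the hypothesis on $T$) and in $V^\PP$ (valid because $\PP$ forces $\MM^{+++}$), and recalling that $p$ is a fixed element of $H(\omega_2)^V$ untouched by the passage to $V^\PP$, I would conclude that $L(\qp{\ON}^{\omega_1})^V$ and $L(\qp{\ON}^{\omega_1})^{V^\PP}$ satisfy the same sentences with parameter $p$; as $V^\PP \vDash \phi(p)^{L(\qp{\ON}^{\omega_1})}$, the same holds in $V$.

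The hard part is entirely in (i)--(iii) and in the self-genericity characterization. Proving that $\mathbb{U}_{\mathrm{SSP}}$ below a totally rigid condition is $<\delta$-presaturated for supercompact $\delta$ is where the ``limits of supercompacts'' hypothesis will genuinely be used: it should feed a Laver-function/reflection argument controlling stationary-set correctness along the class-length iteration implicit in $\mathbb{U}_{\mathrm{SSP}}$. Establishing freezeability, together with the equivalence of $\MM^{+++}$ with self-genericity, will require a delicate analysis of correct embeddings and of $\mathbb{U}_{\mathrm{SSP}}$-names. Once those are in place, the displayed absoluteness should follow formally, in a manner structurally parallel to the way Woodin's Absoluteness Lemma for $L(\R)$ drops out of the iterability theory of $\mathbb{P}_{\max}$.
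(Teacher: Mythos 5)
The first thing to note is that the paper does not prove this theorem: it sits in the subsection ``More on forcing axioms'', which explicitly announces that its results are stated without proof, and the theorem is quoted from Viale's work (\cite{Viale}, \cite{Viale2}). There is therefore no in-paper argument to compare yours against; what can be assessed is whether your outline would actually deliver the statement.

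Your outline correctly identifies the architecture of Viale's actual argument: the category forcing $\mathbb{U}_{\mathrm{SSP}}$ of SSP posets ordered by correct (SSP-quotient-preserving) complete embeddings, density of totally rigid conditions, freezeability, presaturation below totally rigid conditions at supercompact limits of supercompacts, and the reading of $\MM^{+++}$ as a density-of-self-generic-conditions principle. But as a proof it has a genuine gap of scale: every item in your (i)--(iii), the equivalence of $\MM^{+++}$ with self-genericity, and the factorization $\mathbb{U}_{\mathrm{SSP}} \res \PP \cong \PP * \dot{\mathbb{U}}_{\mathrm{SSP}}^{V^\PP}$ that underlies your claim that the two routes to the generic extension compute the same $L(\qp{\ON}^{\omega_1})$ are asserted rather than proved, and together they constitute essentially the entire mathematical content of the theorem. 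Two steps in particular cannot be waved through. First, passing from ``$V$ and $V^\PP$ are each self-generic for their own category forcing'' to ``$L(\qp{\ON}^{\omega_1})^V$ and $L(\qp{\ON}^{\omega_1})^{V^\PP}$ agree on $\phi(p)$'' requires arranging that the relevant $\mathbb{U}_{\mathrm{SSP}}$-generic extensions of the two models actually coincide (or are elementarily equivalent over $H(\omega_2)^V$); this is exactly where presaturation, total rigidity, and the absorption of $\PP$ as a condition must be combined, and it is the crux, not a formality. Second, you need $\MM^{+++}$ not only in $V$ and $V^\PP$ (where it is hypothesized) but in the intermediate models where self-genericity is invoked, which is a preservation statement requiring proof. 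So the outline is a faithful map of the terrain, but none of the terrain has been crossed; as submitted it is a research program, not a proof.
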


%%%%%%%%%%%%%%%%%%%%%%%%%%%%%%%%%%%%%%%%%%%%%%%%%%%%%%%%%%%%%%%%%%%%%%%%%%%%%%%%%%%%%%%%%%%%%%%%%%%%%%%%

%%%%%%%%%%%%%%%%%%%%%%%%%%%%%%%%%%%%%%%%%%%%%%%%%%%%%%%%%%%%%%%%%%%%%%%%%%%%%%%%%%%%%%%%%%%%%%%%%%%%%%%%

%%%%%%%%%%%%%%%%%%%%%%%%%%%%%%%%%%%%%%%%%%%%%%%%%%%%%%%%%%%%%%%%%%%%%%%%%%%%%%%%%%%%%%%%%%%%%%%%%%%%%%%%

	\section{Strong Reflection Principle} \label{sec:srp}

		In the study of the consequences of $\MM$, there are certain statements that have been proved useful in isolating many of the characteristics of $\MM$: among those, the most prominents are the strong reflection principle $\SRP$, the open coloring axiom $\OCA$, and the $P$-ideal dichotomy $\PID$. Reference text for this section is \cite[5A]{Bekkali}. In this section we shall state the first one, prove it under $\MM$ and examine its consequences. We first need the following definition.

		\begin{definition}
			A set $S \subseteq \qp{X}^\omega$ is \emph{projectively stationary} iff it is stationary, $\omega_1 \subseteq X$, and its restriction $S \downarrow \omega_1 = \bp{A \cap \omega_1: ~ A \in S}$ contains a club on $\qp{\omega_1}^\omega$.
		\end{definition}

		The property of being projectively stationary will be mostly used by means of the following lemma.

		\begin{lemma} \label{rPrjIntersection}
			Let $S \subseteq \qp{X}^\omega$ be projectively stationary, and $T \subset \omega_1$ be stationary. Then $S \cap \cp{T \uparrow X}$ is stationary.
		\end{lemma}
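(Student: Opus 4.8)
The plan is to fix an arbitrary club $C$ on $\qp{X}^\omega$ and locate a member of $S\cap(T\uparrow X)$ inside $C$; since every club on $\qp{X}^\omega$ is $C_f$ for some $f\colon X^{<\omega}\to X$, write $C=C_f$. The heart of the matter is the claim that $S\cap C$ is again projectively stationary: it is stationary because $S$ is stationary and $C$ is a club, and the real content is that $(S\cap C)\downarrow\omega_1$ still contains a club on $\qp{\omega_1}^\omega$. Granting the claim, fix $d'$ with $C_{d'}\subseteq(S\cap C)\downarrow\omega_1$; the ordinals closed under $d'$ form a closed unbounded $E\subseteq\omega_1$ contained in $(S\cap C)\downarrow\omega_1$. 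As $T$ is stationary, pick $\delta\in E\cap T$ and then $A\in S\cap C$ with $A\cap\omega_1=\delta$. Now $A\in S$, $A\in C$, and $A\cap\omega_1=\delta\in T$, so $A\in T\uparrow X=\{B\subseteq X\colon B\cap\omega_1\in T\}$ (this is where $\omega_1\subseteq X$ is used). Since $C$ was arbitrary, $S\cap(T\uparrow X)$ is stationary.

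To prove the claim I would pass to elementary submodels. Fix a regular $\theta$ with $X,S,T,f$, together with a function $d$ witnessing $C_d\subseteq S\downarrow\omega_1$, all in $H(\theta)$, and let $\mathcal{E}$ be the set of $N\prec H(\theta)$ with $\{X,S,T,f,d,\omega_1\}\subseteq N$ and $N\cap X\in S$. For $N\in\mathcal{E}$: $N\cap X\in S\subseteq\qp{X}^\omega$ is countable, so $\delta_N:=N\cap\omega_1\subseteq N\cap X$ is a countable ordinal; $N\cap X$ is closed under $f$, hence lies in $C$; and $\delta_N$ is closed under $d$, hence $\delta_N\in C_d\subseteq S\downarrow\omega_1$. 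Moreover $\mathcal{E}$ is the intersection of $S\uparrow H(\theta)$ with the club of elementary submodels of $H(\theta)$ containing the fixed parameters, so $\mathcal{E}$ is stationary on $\pp(H(\theta))$ by Lemma \ref{sLifting} ($S$ being stationary on $\pp(X)$), whence $\mathcal{E}\downarrow\omega_1$ is stationary on $\pp(\omega_1)$ by the projection half of Lemma \ref{sLifting}, and being a set of ordinals it is classically stationary by Lemma \ref{sClassicalOmg1}. What remains — and what is genuinely the work — is to upgrade ``stationary'' to ``contains a club'' here: one builds, over a closed unbounded set of ordinals $\delta$, a continuous increasing tower of countable elementary submodels of $H(\theta)$ whose union $N$ satisfies $N\cap\omega_1=\delta$ and $N\cap X\in S$, absorbing at successor stages both closure under $f$ and, using $C_d\subseteq S\downarrow\omega_1$, witnesses in $S$ for the traces produced so far; this is exactly the assertion that $S\cap C$ is projectively stationary.

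The main obstacle, then, is this last point: passing from ``$\mathcal{E}\downarrow\omega_1$ (equivalently, the trace of $S\cap C$) is stationary'' to ``it contains a club in $\omega_1$''. This is the only step that uses projective stationarity of $S$ rather than mere stationarity, and it is precisely why the definition of projectively stationary demands that $S\downarrow\omega_1$ contain a club. Once the trace contains a club, intersecting it with the stationary set $T$ and pulling a witness back up through the projection is immediate, and everything else is routine manipulation of the lifting and projection operations (Lemma \ref{sLifting}) and of the translation between classical and generalized stationarity on $\omega_1$ (Lemma \ref{sClassicalOmg1}).
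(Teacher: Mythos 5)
Your skeleton is exactly the paper's: reduce everything to the claim that $S\cap C$ is again projectively stationary, then intersect the club contained in $(S\cap C)\downarrow\omega_1$ with $T$ and pull a witness back up to $\qp{X}^\omega$. The paper dismisses that claim with the word ``clearly''; you correctly identify it as the entire content of the lemma, but your proposed argument for it does not close. The tower construction you sketch can arrange, for club-many $\delta$, a countable $N\prec H(\theta)$ with $N\cap\omega_1=\delta$ and $N\cap X\in C_f$, but nothing in it forces $N\cap X\in S$: the hypothesis $C_d\subseteq S\downarrow\omega_1$ only supplies, for each trace $z\in C_d$, \emph{some} external witness $A_z\in S$ with $A_z\cap\omega_1=z$, and absorbing such witnesses into your models gives no control over whether the particular set $N\cap X$ you are building lands in $S$. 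Moreover, the step you postpone --- upgrading ``$\mathcal E\downarrow\omega_1$ is stationary'' to ``contains a club'' --- is not a technical refinement: since a set of countable ordinals contains a club exactly when it meets every stationary subset of $\omega_1$, that upgrade (quantified over all clubs $C$) \emph{is} the statement of the lemma, so no argument of this shape can succeed without a stronger hypothesis.

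Indeed, under the paper's literal definition of projectively stationary the claim, and the lemma, fail. Take $X=\omega_2$, split the countable limit ordinals into stationary sets $T_0,T_1$, fix $\epsilon\in(\omega_1,\omega_2)$ of cofinality $\omega$, and let $S$ be the union of $\bp{A\in\qp{\omega_2}^\omega:~A\cap\omega_1\in T_0}$ with $\bp{A\in\qp{\omega_2}^\omega:~A\cap\omega_1\in T_1~\wedge~\sup(A)=\epsilon}$. Then $S$ is stationary and $S\downarrow\omega_1$ is the set of all countable limit ordinals, which is a club on $\qp{\omega_1}^\omega$, so $S$ is projectively stationary as defined; yet $S\cap\cp{T_1\uparrow\omega_2}$ consists only of sets with supremum $\epsilon$ and is killed by the club of countable sets containing $\epsilon+1$. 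The repair is to take as the hypothesis (or as the definition itself, following Feng and Jech) the property that for every stationary $T\subseteq\omega_1$ the set $\bp{A\in S:~A\cap\omega_1\in T}$ is stationary --- equivalently, that $(S\cap C)\downarrow\omega_1$ contains a club for \emph{every} club $C$ --- under which the lemma is immediate and which is what the later arguments in the paper (Lemma~\ref{rClaimSf}, Theorem~\ref{rSaturation}) actually verify when they establish projective stationarity.
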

		\begin{proof}
			Given a club $C$ on $X$, $S' = S \cap C$ is clearly projectively stationary. Let $\alpha$ be in $T \cap \cp{S' \downarrow \omega_1}$, and $A \in S'$ such that $A \cap \omega_1 = \alpha$. Then $A \in S \cap \cp{T \uparrow X} \cap C$.
		\end{proof}

		\begin{definition}
			A stationary set $S \subseteq \pp(X)$ \emph{reflects on $Z$} iff $Z \subseteq X$ and $S \cap \pp(Z)$ is stationary (notice that $S \downarrow Z$ is necessarily stationary while $S \cap \pp(Z)$ may not). A stationary set $S \subseteq \pp(X)$ \emph{strongly reflects on $Z$} iff $S \cap \qp{Z}^\omega$ contains a club on $\qp{Z}^\omega$.
		\end{definition}

		\begin{definition}
			We call \emph{strong reflection principle} on $X$ and write $\SRP(X)$ as an abbreviation for the sentence ``every projectively stationary set on $\qp{X}^\omega$ strongly reflects on some $Z \supseteq \omega_1$ of size $\omega_1$''. We say \emph{strong reflection principle} (and write $\SRP$) to mean ``$\SRP(X)$ for all $X \supseteq \omega_1$''.
		\end{definition}

		The reflection property can be restated in the following equivalent way.

		\begin{lemma} \label{rPrjStatContF}
			$\SRP(X)$ holds iff for every projectively stationary $S \subset \qp{X}^{\omega}$ there exists a continuous increasing function $f: ~ \omega_1 \rightarrow S$ with $\bigcup \ran(f) \supseteq \omega_1$.
		\end{lemma}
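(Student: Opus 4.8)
The plan is to prove the equivalence separately for each projectively stationary $S \subseteq \qp{X}^\omega$: for a fixed such $S$ I claim that $S$ strongly reflects on some $Z$ with $\omega_1 \subseteq Z \subseteq X$ and $\vp{Z} = \omega_1$ if and only if there is a continuous $\subseteq$-increasing $f : \omega_1 \to S$ with $\bigcup \ran(f) \supseteq \omega_1$. The lemma then follows by quantifying over all projectively stationary $S \subseteq \qp{X}^\omega$. (Here \emph{continuous} means $f(\lambda) = \bigcup_{\alpha<\lambda} f(\alpha)$ for every limit $\lambda < \omega_1$; note $S \subseteq \qp{X}^\omega$ already forces $Z = \bigcup \ran(f) \subseteq X$.)

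For the forward direction, assume $S$ strongly reflects on such a $Z$, so $S \cap \qp{Z}^\omega \supseteq C_h$ for some $h : Z^{<\omega} \to Z$, and fix an enumeration $Z = \bp{z_\alpha : \alpha < \omega_1}$. I would recursively build a chain $\ap{M_\alpha : \alpha < \omega_1}$: let $M_0$ be the closure under $h$ of $\omega \cup \bp{z_0}$; let $M_{\alpha+1}$ be the closure under $h$ of $M_\alpha \cup \bp{z_\alpha, w_\alpha}$, where $w_\alpha \in Z \setminus M_\alpha$ is arbitrary (such $w_\alpha$ exists since $M_\alpha$ is countable and $Z$ is not); and let $M_\lambda = \bigcup_{\alpha<\lambda} M_\alpha$ at limit stages. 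Each $M_\alpha$ is a countably infinite subset of $Z$ closed under $h$ — at limits because $h$ acts on finite tuples — hence $M_\alpha \in C_h \subseteq S$; the chain is continuous by construction, strictly $\subseteq$-increasing because $w_\alpha \in M_{\alpha+1}\setminus M_\alpha$, and $\bigcup_\alpha M_\alpha = Z \supseteq \omega_1$ since $z_\alpha \in M_{\alpha+1}$. So $f(\alpha) := M_\alpha$ is as required.

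For the converse, let $f$ be given and set $Z := \bigcup \ran(f) = \bigcup_{\alpha<\omega_1} f(\alpha)$; then $\omega_1 \subseteq Z \subseteq X$ and $\vp{Z} = \omega_1$. It suffices to produce $g : Z^{<\omega} \to Z$ with $C_g \subseteq \ran(f)$, for then $S \cap \qp{Z}^\omega \supseteq C_g$ witnesses strong reflection on $Z$. For $z \in Z$ put $\gamma(z) = \min\bp{\alpha : z \in f(\alpha)}$, and fix enumerations $f(\beta) = \bp{e^\beta_n : n<\omega}$ for each $\beta < \omega_1$. Define $g$ so that (i) it generates $\omega$ from below (say, $g$ sends the empty tuple to $0$ and each $n \in \omega$ to $n+1$), and (ii) $g(\ap{z,n}) = e^{\gamma(z)}_n$ for $z \in Z$ and $n \in \omega$ (elsewhere $g$ is defined arbitrarily). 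If $A \in \qp{Z}^\omega$ is closed under $g$, then $\omega \subseteq A$ and, by (ii), $f(\gamma(z)) \subseteq A$ for every $z \in A$. Let $\delta = \sup_{z\in A}\gamma(z) < \omega_1$. Then $A \subseteq f(\delta)$, and conversely $f(\delta) \subseteq A$: either the supremum is attained by some $z_0 \in A$, so $f(\delta) = f(\gamma(z_0)) \subseteq A$, or $\delta$ is a limit and continuity of $f$ gives $f(\delta) = \bigcup_{\beta<\delta} f(\beta) = \bigcup_{z\in A} f(\gamma(z)) \subseteq A$. Hence $A = f(\delta) \in \ran(f)$, so $C_g \subseteq \ran(f) \subseteq S$.

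I expect the converse direction to be the real work: designing $g$ so that its closure points are exactly the sets $f(\alpha)$. The two delicate ingredients are forcing $\omega \subseteq A$ for every $g$-closed $A$ (so that clause (ii) actually recovers all of $f(\gamma(z))$), and the limit case in the computation of $f(\delta)$, where continuity of $f$ is precisely what makes $\bigcup_{z\in A} f(\gamma(z))$ coincide with $f(\delta)$. The forward direction is the routine construction of a continuous chain running up inside a given club, and should be unproblematic.
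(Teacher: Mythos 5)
Your proof is correct and takes essentially the same route as the paper's: in the forward direction a continuous increasing chain threaded through the club witnessing strong reflection on $Z$, and in the converse a function $g:Z^{<\omega}\rightarrow Z$ whose closure points are exactly the sets $f(\alpha)$, so that $\ran(f)$ contains (indeed, is) a club on $\qp{Z}^\omega$. The only difference is one of exposition: your converse spells out the coding of $\ran(f)$ as a club, which the paper delegates to ``the same argument as above'' (i.e.\ the construction in Lemma \ref{sClassicalOmg1}).
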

		\begin{proof}
			First, suppose that $\SRP(X)$ holds and let $S \subset \qp{X}^{\omega}$ be a projectively stationary set. Let $Z \supset \omega_1$ be such that $S$ strongly reflects on $Z$. Fix an enumeration $\ap{z_\alpha: ~ \alpha < \omega_1}$ of $Z$, and let $Z_\alpha = \bp{z_\beta: ~ \beta < \alpha}$. The set $C_1 = \bp{Z_\alpha: ~ \alpha < \omega_1}$ is a club on $\qp{Z}^\omega$ (by a similar argument to the one for $\omega_1$ club in Lemma \ref{sClassicalOmg1}). Since $S$ strongly reflects on $Z$, $S \cap C_1 = \bp{Z_\alpha: ~ Z_\alpha \in S }$ contains a club $C_2$. Thus, the increasing enumeration of $C_2$ is a continuous increasing function $f: ~ \omega_1 \rightarrow S$ with $\bigcup \ran(f) = Z \supseteq \omega_1$, as required.

			Conversely, suppose there exists a function $f: \omega_1 \rightarrow S$ as above, and define $Z = \bigcup \ran(f)$. Then $S \cap \qp{Z}^\omega$ contains $\ran(f)$ that is a club on $\qp{Z}^\omega$ by the same argument as above.
		\end{proof}

		Notice that the requirement $\ran(f) \supseteq \omega_1$ prevents $f$ to be eventually constant. To prove that $\SRP$ is a consequence of $\MM$, we shall define a poset $\PP_S$ that forces a projectively stationary set $S$ to strongly reflect on some $Z \supseteq \omega_1$, and argue that this poset is s.s.p.~for any $S$.

		\begin{definition}
			Given $S$ a projectively stationary set, $\PP_S$ is the poset of all the continuous increasing functions $f: ~ \alpha+1 \rightarrow S$ with $\alpha < \omega_1$ ordered by reverse inclusion.
		\end{definition}

		\begin{lemma} \label{rDense}
			The following sets are open dense in $\PP_S$ for $\alpha < \omega_1$, $a \in \bigcup S$:
			\[
			\begin{array}{rcl}
				D_\alpha &=& \bp{f \in \PP_S: ~ \alpha \in \dom(f)} \\
				E_a &=& \bp{f \in \PP_S: ~ a \in \bigcup \ran(f)}
			\end{array}
			\]
		\end{lemma}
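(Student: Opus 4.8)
\emph{Approach.} The plan is to check openness by inspection, prove density of the $E_a$ directly, and prove density of the $D_\alpha$ by induction on $\alpha<\omega_1$, the limit step being the only genuine obstacle. Openness is immediate: if $f\in D_\alpha$ and $g\leq f$ then $g\supseteq f$, so $\alpha\in\dom f\subseteq\dom g$ and $g\in D_\alpha$, and likewise $\bigcup\ran f\subseteq\bigcup\ran g$ gives $E_a$ open. The engine of every successor step is the one-step move: for $f\in\PP_S$ with $\gamma=\sup\dom f$, any countable $b\subseteq X$ and any $z\in X\setminus f(\gamma)$, there is $B\in S$ with $f(\gamma)\cup b\cup\{z\}\subseteq B$. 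Indeed $\{Y\in\qp{X}^\omega: f(\gamma)\cup b\cup\{z\}\subseteq Y\}$ is a countable intersection of clubs on $X$ of the form $\{Y: a\in Y\}$, hence contains a club by $\sigma$-completeness of $\CF_X$, and the stationary set $S$ meets it; with $b=\emptyset$ this also gives $\bigcup S=X$, so each $E_a$ is meaningful. Since $B\supsetneq f(\gamma)$, the function $f\cup\{(\gamma+1,B)\}$ is a condition extending $f$; putting $a$ into $b$ yields density of $E_a$, and appending one such step to any condition already containing $\beta$ in its domain yields the successor case $\alpha=\beta+1$ of density of $D_\alpha$, the base case being $D_0=\PP_S$.

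\emph{The limit case, the crux.} Here continuity forces $f(\lambda)=\bigcup_{\xi<\lambda}f(\xi)$, and an increasing union of members of $S$ need not return to $S$; I would beat this by importing the value at the top of the extension ``from outside''. Fix a large regular $\theta$ with $\PP_S,S,X\in H(\theta)$. Since $S$ is stationary and $\{N\cap X: N\prec H(\theta)\text{ countable}, f,S,\PP_S,X,\alpha\in N\}$ contains a club on $\qp{X}^\omega$ (the Skolem-hull argument recalled in Section~\ref{sec:stationarity}), choose a countable $M\prec H(\theta)$ with $f,S,\PP_S,X,\alpha\in M$ and $M\cap X\in S$; note $\alpha\subseteq M$ (as $\alpha$ is countable in $M$) and $\omega\subseteq M\cap X$, so $M\cap X\in\qp{X}^\omega$.

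\emph{The fusion.} Fix externally an increasing sequence $\ap{\xi_n: n<\omega}$ cofinal in $\alpha$ and an enumeration $M\cap X=\{x_n: n<\omega\}$, and build $f=f_0$, $f_1$, $f_2,\ldots$ with $f_n\in M$, $f_{n+1}\leq f_n$, and $f_{n+1}\in D_{\xi_n}\cap E_{x_n}$; this is possible because $f_n,D_{\xi_n},E_{x_n}\in M$, because $M$ satisfies ``some $q\leq f_n$ lies in $D_{\xi_n}\cap E_{x_n}$'' (by the inductive hypothesis for $D_{\xi_n}$, by the previous paragraph for $E_{x_n}$, and by openness to meet both at once), so a witness exists inside $M$. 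Put $g^-=\bigcup_n f_n$: since $\xi_n\in\dom f_{n+1}$ and $\xi_n\to\alpha$, the ordinal $\mu:=\dom g^-=\bigcup_n\dom f_n$ is a limit with $\mu\geq\alpha$, and $g^-$ is continuous increasing on $\mu$. As each $f_n\in M$ is countable in $M$, $\ran f_n\subseteq M$, hence $\bigcup\ran g^-\subseteq M\cap X$; and $\bigcup\ran g^-\supseteq\{x_n: n<\omega\}=M\cap X$ because $f_{n+1}\in E_{x_n}$. Thus $g^-$ has union exactly $M\cap X\in S$, so $g:=g^-\cup\{(\mu,M\cap X)\}$ is a condition of $\PP_S$ below $f$ (continuous at $\mu$, last value in $S$) with $\alpha\in\mu+1=\dom g$; hence $g\in D_\alpha$, closing the induction.

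\emph{On the obstacle and absoluteness.} The single delicate point is precisely this limit step: making the union land in $S$, which is done by pulling the limit value in as $M\cap X$ for a model chosen (by stationarity of $S$) with $M\cap X\in S$, and then realizing $M\cap X$ as a continuous union of conditions living inside $M$ via the fusion against the $D_{\xi_n}$ (inductive hypothesis) and $E_{x_n}$. Everything else is bookkeeping; its success rests on two facts used without proof above — that $\{N\cap X:\dots\}$ contains a club on $\qp{X}^\omega$, and that ``$D_\xi$ is dense in $\PP_S$'', ``$E_x$ is dense in $\PP_S$'' and ``$S$ is stationary on $\qp{X}^\omega$'' are assertions about elements of $H(\theta)$ and therefore hold in any countable $M\prec H(\theta)$ containing the relevant parameters.
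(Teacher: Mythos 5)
Your proof is correct, and your treatment of openness and of the $E_a$ is essentially the paper's (append a new top value $B\in S$ found by intersecting $S$ with the club $\bp{Y:\ f(\beta)\cup\bp{a}\subseteq Y}$). Where you diverge sharply is $D_\alpha$: the paper's proof is one line, because ``increasing'' here is non-strict, so a condition $f:\beta+1\rightarrow S$ may be extended to domain $\alpha+1$ by repeating the value $f(\beta)$ on $(\beta,\alpha]$; the constant tail makes continuity at limits automatic and the limit value is $f(\beta)\in S$, so there is no obstacle at limit $\alpha$ at all. (That eventually constant functions are admitted is confirmed by the remark after Lemma \ref{rPrjStatContF}, which points out that only the extra clause $\bigcup\ran(f)\supseteq\omega_1$ rules them out.) You instead read the limit step as a genuine difficulty and resolve it with a countable $M\prec H(\theta)$ chosen so that $M\cap X\in S$, followed by a fusion against the $D_{\xi_n}$ and $E_{x_n}$ inside $M$; this is valid, but it is exactly the argument the paper reserves for Lemma \ref{rTodo2} (the proof that $\PP_S$ is s.s.p.), where one really does need the final value to swallow all of $M\cap\bigcup S$. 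So your argument buys generality you do not need here (it would survive a strict-increase requirement on conditions), at the cost of invoking stationarity of $S$ and elementarity for a fact that follows from the bare definition of $\PP_S$; the paper's version also makes transparent that density of $D_\alpha$ uses nothing about $S$ beyond nonemptiness.
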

		\begin{proof}
			For the first part, given any $f \in \PP_S$, $f: ~ \beta+1 \rightarrow S$ define $g \in D_\alpha$ below $f$ to be constant after $\beta$, i.e. $g(\gamma) = f(\beta)$ for every $\gamma \in \alpha+1 \setminus \beta$, $g(\gamma) = f(\gamma)$ otherwise.

			For the second part, given any $f \in \PP_S$, $f: ~ \beta+1 \rightarrow S$ let $A$ be any set in the intersection of $S$ with the club $C_{f(\beta) \cup \bp{a}} = \bp{Y \subseteq X: ~ f(\beta) \cup \bp{a} \subseteq Y}$. Then $g = f \cup \ap{\beta+1, A} \in E_a$ extends $f$ and is in $E_a$.
		\end{proof}

		\begin{lemma} \label{rTodo2}
			$\PP_S$ is an s.s.p.~poset.
		\end{lemma}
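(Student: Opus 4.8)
The plan is to show that $\PP_S$ preserves stationary subsets of $\omega_1$. Fix a stationary set $T \subseteq \omega_1$, a name $\dot D$ with $\1_{\PP_S} \Vdash \dot D \subseteq \check\omega_1$ club, and a condition $p_0 \in \PP_S$; I must find an extension of $p_0$ forcing $\dot D \cap \check T \ne \emptyset$. The standard strategy is to pass to a countable elementary submodel of a sufficiently large $H(\theta)$. Take $\theta$ large with $S, T, \PP_S, \dot D, p_0 \in H(\theta)$, and choose $M \prec H(\theta)$ countable with all this data in $M$ and with $\delta := M \cap \omega_1 \in T$; this is possible precisely because $T$ is stationary, since the set of such $\delta$ is the intersection of $T$ with a club on $\omega_1$. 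The goal is then to build inside the lemma a decreasing $M$-generic sequence of conditions whose union is a condition $q = p_\infty \cup \{\langle \delta, A\rangle\}$ for a suitable $A \in S$, and to arrange simultaneously that $q \Vdash \check\delta \in \dot D$, which forces $\dot D \cap \check T \ne \emptyset$.

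The key steps, in order: (1) Enumerate the dense subsets of $\PP_S$ lying in $M$ as $\langle E_n : n < \omega\rangle$, and enumerate a cofinal set of ``$\dot D$-deciding'' dense sets — for each $\beta < \delta$, the set of conditions deciding whether $\check\beta \in \dot D$ and, by density (as in Shelah's theorem above), pushing elements of $\dot D$ below $\delta$ arbitrarily high. (2) Construct a decreasing sequence $p_0 \geq p_1 \geq \cdots$ in $\PP_S \cap M$ meeting all $E_n$ and all the $\dot D$-deciding dense sets; since each $p_n$ has domain some countable successor ordinal in $M$, and $M \cap \omega_1 = \delta$, the domains of the $p_n$ are cofinal in $\delta$. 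Let $p_\infty = \bigcup_n p_n$, a continuous increasing function from $\delta$ (a limit) into $S$; it is $M$-generic in the sense that it meets every dense set in $M$. (3) The union $N := \bigcup \ran(p_\infty) = \bigcup_n \bigcup\ran(p_n)$ satisfies $N \cap \omega_1 = \delta$ (by the $E_a$ density lemma \ref{rDense}, every $a \in \bigcup S \cap M$ eventually appears, and nothing outside $M$ does) and $N = M \cap \bigcup S$, hence $N \in [X]^\omega$. (4) Here is where projective stationarity enters: since $S$ is projectively stationary and $\delta = N \cap \omega_1 \in T$ (indeed more: by Lemma \ref{rPrjIntersection}, $S \cap (T\uparrow X)$ is stationary, so in fact $S$ contains elementary submodels $A$ of $(H(\theta);\in,\ldots)$ with $A \cap \omega_1 = \delta$), we may pick $A \in S$ with $A \supseteq N = \bigcup\ran(p_\infty)$ and $A \cap \omega_1 = \delta$ — concretely, take $A = N$ itself if $N \in S$, or use projective stationarity to find such an $A$ end-extending $N$ at level $\delta$. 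Then $q := p_\infty \cup \{\langle\delta, A\rangle\}$ is a continuous increasing function $\delta+1 \to S$, i.e.\ $q \in \PP_S$, and $q \leq p_n$ for all $n$. (5) Because $q$ extends an $M$-generic sequence deciding $\dot D$ below every $\beta < \delta$ and forcing elements of $\dot D$ cofinally below $\delta$, a standard genericity argument gives $q \Vdash \sup(\dot D \cap \check\delta) = \check\delta$, hence $q \Vdash \check\delta \in \dot D$ since $\dot D$ is forced closed; as $\delta \in T$, $q \Vdash \dot D \cap \check T \ne \emptyset$.

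The main obstacle is step (4): ensuring that after building the $M$-generic lower bound $p_\infty$ with $\bigcup\ran(p_\infty) = N$, there really is an $A \in S$ with $A \supseteq N$ and $A \cap \omega_1 = \delta$ so that the one-point extension $q$ still maps into $S$. This is exactly what projective stationarity buys us — the hypothesis is strictly stronger than $S$ being stationary, and it guarantees $\delta \in S\downarrow\omega_1$ on a club, in particular at our chosen $\delta \in T$; combined with Lemma \ref{rPrjIntersection} (applied with this $T$) and closing $S$ under the relevant club, we can always find the needed $A$ absorbing the countable set $N$. The bookkeeping to interleave the three families of dense sets (the $E_n \in M$, the $\dot D$-deciding sets, and the $E_a$'s making $N$ capture all of $M \cap \bigcup S$) is routine once the enumeration is set up, so I would state it compactly and concentrate the argument on why the terminal condition $q$ lies in $\PP_S$ and forces $\check\delta \in \dot D$.
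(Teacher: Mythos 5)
Your overall strategy (countable $M \prec H(\theta)$, an $M$-generic descending sequence through the dense sets of Lemma \ref{rDense}, a one-point closure at $\delta = M \cap \omega_1$) is the paper's, but step (4) as written has a genuine gap. A condition in $\PP_S$ is a \emph{continuous} increasing function, so the value of $q$ at the limit ordinal $\delta$ is forced to be exactly $\bigcup_{\alpha<\delta} p_\infty(\alpha) = N$; you have no freedom to choose some $A \in S$ with $A \supsetneq N$ ``end-extending $N$ at level $\delta$'' --- that would destroy continuity and $q$ would not be a condition. So the fallback branch of your step (4) is unavailable, and the branch ``take $A = N$ if $N \in S$'' is the only one, but nothing in your construction guarantees $N \in S$: you chose $M$ only so that $M \cap \omega_1 \in T$, and then built $N = M \cap \bigcup S$ afterwards, at which point it is too late to arrange $N \in S$.

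The fix is to apply projective stationarity \emph{before} building the sequence, not after. By Lemma \ref{rPrjIntersection} the set $S \cap (T \uparrow \bigcup S)$ is stationary, and lifting it to $\pp(H(\theta))$ gives stationarily many countable $M \prec H(\theta)$ with $M \cap \bigcup S \in S$ \emph{and} $M \cap \omega_1 \in T$ simultaneously (and containing $p, S, T, \dot D$). Choosing such an $M$ at the outset, the dense sets $E_a$ for $a \in M \cap \bigcup S$ force $\bigcup \ran(p_\infty) = M \cap \bigcup S$, which lies in $S$ by the choice of $M$; hence $q = p_\infty \cup \{\langle \delta, M \cap \bigcup S\rangle\}$ is a legitimate condition. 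The rest of your argument (steps (1)--(3) and (5)) then goes through as in the paper; your extra family of ``$\dot D$-deciding'' dense sets is harmless but not needed, since $M$-genericity of any filter through $q$ already gives $M[G] \cap \omega_1 = \delta$ and hence $\delta \in \val_G(\dot D)$.
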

		\begin{proof}
			Let $T \subseteq \omega_1$ be a stationary set, and $\dot C$ be a $\PP_S$-name for a club. Given any $p \in \PP_S$, we need to find a $q \leq p$, $\delta \in T$ such that $q \Vdash \check{\delta} \in \dot C$.

			Let $M$ be a countable elementary submodel of $H(\theta)$ such that $p, S, T, \dot{C} \in M$ and $M \cap \bigcup S \in S$, $M \cap \omega_1 = \delta \in T$ (such an $M$ exists by Lemma \ref{rPrjIntersection} and lifting). Fix an enumeration $\ap{A_n: ~ n < \omega}$ of the $\PP_S$-dense sets in $M$, and define a sequence $p_n$ such that $p_0 = p$, $p_{n+1} \in A_n$ and $p_{n+1} \leq p_n$. Then $p_\omega = \bigcup_{n < \omega} p_n$ is a function from $\delta$ to $S$, since $p_\omega$ is below all $D_\alpha$ as in Lemma \ref{rDense} for $\alpha \in M \cap \omega_1 = \delta$. Furthermore, $\bigcup p_\omega[\delta] = M \cap \bigcup S$, since $p_\omega$ is below all $E_a$ as in Lemma \ref{rDense} for $a \in M \cap \bigcup S$. Then $q = p_\omega \cup \ap{\delta, M \cap \bigcup S}$ is continuous, hence $q \in \PP_S$. Moreover, $q \Vdash \check{\delta} \in \dot C$: given any generic filter $G$ containing $q$, $G$ is generic also for $M$ hence $M[G] \vDash \val_G(\dot{C}) \text{ club on } \omega_1$, but $M[G] \cap \omega_1 = \delta$ so $\val_G(\dot{C}) \cap \delta$ is unbounded and $\delta \in \val_G(\dot{C})$. This holds for any $G \ni q$ hence $q \Vdash \check{\delta} \in \dot{C}$, $\delta \in T$.
		\end{proof}

		\begin{theorem}[Todorcevic]
			$\MM \Rightarrow \SRP$.
		\end{theorem}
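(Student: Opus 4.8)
The plan is to prove $\SRP(X)$ for an arbitrary $X \supseteq \omega_1$, which is exactly the statement $\SRP$. So fix such an $X$ and a projectively stationary $S \subseteq \qp{X}^\omega$. By Lemma~\ref{rPrjStatContF} it suffices to produce a continuous increasing function $f : \omega_1 \to S$ with $\bigcup \ran(f) \supseteq \omega_1$, and the idea is to read off such an $f$ from a sufficiently generic filter on the poset $\PP_S$.

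First I would note that $\PP_S$ is s.s.p.\ by Lemma~\ref{rTodo2}, hence locally s.s.p., so $\MM$ gives $\FA_{\omega_1}(\PP_S)$. I then apply this to the family
\[
\mathcal D = \bp{ D_\alpha : ~ \alpha < \omega_1 } \cup \bp{ E_a : ~ a \in \omega_1 },
\]
which by Lemma~\ref{rDense} consists of open dense subsets of $\PP_S$ and has cardinality $\omega_1$ --- here it is essential to include the sets $E_a$ only for $a$ in $\omega_1 \subseteq X = \bigcup S$ and not for all of $\bigcup S$, which may well be larger than $\omega_1$. Let $G$ be a filter meeting every set in $\mathcal D$, and set $f = \bigcup G$.

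The rest is routine bookkeeping. Since $G$ is a filter on a poset ordered by reverse inclusion, any two of its conditions have a common extension in $G$, so $f$ is a function; meeting each $D_\alpha$ forces $\dom(f) = \omega_1$; for each $\alpha < \omega_1$ the restriction $f \res (\alpha+1)$ agrees with a condition of $G$ and hence lies in $\PP_S$, so $f$ is increasing, continuous, and $S$-valued; and meeting each $E_a$ forces $\omega_1 \subseteq \bigcup \ran(f)$, so in particular $\bigcup \ran(f)$ has size exactly $\omega_1$. Thus $f$ witnesses the criterion of Lemma~\ref{rPrjStatContF}, giving $\SRP(X)$; since $X$ was arbitrary, $\SRP$ follows.

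I do not expect a genuine obstacle at this stage: the whole content of the argument has been pushed into the preceding lemmas, especially Lemma~\ref{rTodo2}, where the stationary-set preservation of $\PP_S$ was established and where projective stationarity of $S$ was used --- through Lemma~\ref{rPrjIntersection} --- to catch a countable $M \prec H(\theta)$ with $M \cap \bigcup S \in S$ and $M \cap \omega_1 \in T$. The present theorem only assembles those facts by invoking $\MM$ for $\PP_S$ and translating back via Lemma~\ref{rPrjStatContF}.
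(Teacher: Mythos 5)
Your proof is correct and follows essentially the same route as the paper: invoke Lemma~\ref{rTodo2} to see $\PP_S$ is s.s.p., apply $\MM$ to the $\omega_1$-sized family of dense sets $D_\alpha$ ($\alpha<\omega_1$) and $E_a$ ($a\in\omega_1$) from Lemma~\ref{rDense}, and read off the continuous increasing function from the union of the filter via Lemma~\ref{rPrjStatContF}. Your explicit remark that one only needs $E_a$ for $a\in\omega_1$ (not for all $a\in\bigcup S$) is exactly the choice the paper makes implicitly by writing ``$E_\alpha$ for $\alpha<\omega_1$''.
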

		\begin{proof}
			Let $S$ be a projectively stationary set, and $\PP_S$ be defined as in Lemma \ref{rTodo2}. For every $\alpha < \omega_1$, $D_\alpha$, $E_\alpha$ are open dense sets by Lemma \ref{rDense}. From Lemma \ref{rTodo2} we know that $\PP_S$ is s.s.p., so using $\MM$ we get a filter $G$ meeting all $D_\alpha$, $E_\alpha$ for $\alpha < \omega_1$. Define $g = \bigcup G: ~ \omega_1 \rightarrow S$, then $g$ is a continuous increasing function with $\bigcup \ran(g) \supseteq \omega_1$ hence by Lemma \ref{rPrjStatContF}, $\SRP$ holds.
		\end{proof}

		The strong reflection principle has a number of interesting consequences. The most known is the following result on cardinal arithmetic.

		\begin{theorem} \label{rCardinalArithm}
			Assume $\SRP(\kappa)$ with $\kappa$ regular cardinal. Then $\kappa^{\omega_1} = \kappa^{\omega} = \kappa$.
		\end{theorem}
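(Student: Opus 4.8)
The plan is to prove the key identity $\kappa^{\omega} = \kappa$ first, and then deduce $\kappa^{\omega_1} = \kappa$; the inequalities $\kappa \leq \kappa^{\omega} \leq \kappa^{\omega_1}$ being trivial, it suffices to show $\kappa^{\omega_1} \leq \kappa$. For $\kappa^{\omega} = \kappa$, I would argue by contradiction: suppose $\kappa^{\omega} > \kappa$. The idea is that $\SRP(\kappa)$ forces $\qp{\kappa}^{\omega}$ to be "small" in a way incompatible with having more than $\kappa$ countable subsets. Concretely, I would try to build a projectively stationary set $S \subseteq \qp{\kappa}^{\omega}$ that cannot strongly reflect on any $Z$ of size $\omega_1$, deriving a contradiction with $\SRP(\kappa)$. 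A natural candidate: fix an injection-coding of countable subsets, and let $S$ consist of those $A \in \qp{\kappa}^{\omega}$ with $A \cap \omega_1 \in \omega_1$ (to get projective stationarity, using Lemma~\ref{rPrjIntersection} and the lifting lemma~\ref{sLifting}) together with some "diagonal" obstruction ensuring that on any fixed $Z$ of size $\omega_1$, the club-many $Z_\alpha = \{z_\beta : \beta < \alpha\}$ from an enumeration of $Z$ avoid $S$ on a club.

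The cleaner route, which I expect to be the actual argument, goes through the continuous-function reformulation in Lemma~\ref{rPrjStatContF}. By that lemma, $\SRP(\kappa)$ says every projectively stationary $S \subseteq \qp{\kappa}^{\omega}$ admits a continuous increasing $f : \omega_1 \to S$ with $\bigcup\ran(f) \supseteq \omega_1$. The point is that such an $f$ is determined by countably much information at each stage yet exhausts an $\omega_1$-sized set, so its range is a continuous increasing $\omega_1$-chain of countable sets cofinal in a set of size $\omega_1$. I would fix an enumeration of $\qp{\kappa}^{\omega}$ (assumed of size $>\kappa$ for contradiction) and use $\SRP(\kappa)$ together with a suitable projectively stationary $S$ — e.g.\ $S = \{A \in \qp{\kappa}^{\omega} : A \cap \omega_1 \in \omega_1\}$, which is projectively stationary by Lemma~\ref{rPrjIntersection} since $S \downarrow \omega_1 \supseteq \qp{\omega_1}^{\omega}$ restricted to ordinals, a club — to get a continuous chain $\langle A_\alpha : \alpha < \omega_1\rangle$ in $S$ with $Z = \bigcup_\alpha A_\alpha$. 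Since $\kappa$ is regular and uncountable, $\sup Z < \kappa$ unless $Z$ is cofinal; but the genuine work is counting how many such chains there are and showing every element of $\qp{\kappa}^{\omega}$ gets "captured," forcing $\vp{\qp{\kappa}^{\omega}} \le \kappa$. The standard way: show $\SRP(\kappa)$ implies every set of size $\le \kappa$ of ordinals below $\kappa$, more precisely every $A \in \qp{\kappa}^{\omega}$, lies in some $Z$ of size $\omega_1$ arising as $\bigcup\ran(f)$ for one of at most $\kappa$ many reflection witnesses, so that $\qp{\kappa}^{\omega} \subseteq \bigcup\{\qp{Z}^{\omega} : Z \in \mathcal{Z}\}$ with $\vp{\mathcal{Z}} \le \kappa$ and each $\vp{\qp{Z}^{\omega}} = \omega_1^{\omega} \le \2^{\omega} \le \kappa$ (using $\2^{\omega} \le \kappa$, which itself follows since $\omega_1 \le \kappa$ and $\kappa^\omega = \kappa$ is what we are proving — so this must be bootstrapped carefully, perhaps proving $\2^{\omega_1} \le \kappa$ directly from $\SRP(\kappa)$ applied to $\qp{\omega_1}$-sized reflections first).

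For the step from $\kappa^{\omega} = \kappa$ to $\kappa^{\omega_1} = \kappa$: once we know $\2^{\omega} \le \kappa$ hence $\2^{\omega_1}\le\kappa^{\omega_1}$, I would use that $\SRP(\kappa)$ gives, for each $B \in \qp{\kappa}^{\omega_1}$, a continuous increasing $f : \omega_1 \to \qp{B \cup \omega_1}^{\omega}$ (applying $\SRP$ to an appropriate projectively stationary subset of $\qp{\kappa}^{\omega}$ that "sees" $B$) with union containing $B$; since $f$ is continuous and $\omega_1$-indexed with countable values, $B = \bigcup_{\alpha<\omega_1} f(\alpha)$ is recovered from an $\omega_1$-sequence of elements of $\qp{\kappa}^{\omega}$, i.e.\ from an element of $(\kappa^{\omega})^{\omega_1} = \kappa^{\omega_1}$ — but combined with the fact that only $\kappa$-many $Z$'s of size $\omega_1$ appear and $(\omega_1)^{\omega_1} = \2^{\omega_1}\le\kappa$, one gets $\kappa^{\omega_1} \le \kappa$.

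The main obstacle I anticipate is the bootstrapping issue: the natural counting argument wants to use $\2^{\omega} \le \kappa$ or even $\kappa^\omega = \kappa$ before it is established. The resolution is to first prove, directly from $\SRP(\kappa)$, a weaker statement — that $\vp{\qp{\omega_1}^{\omega}} = \2^{\omega} \le \omega_2 \le \kappa$, or more to the point that $\SRP(\kappa)$ refutes the existence of $\kappa^+$ many elements of $\qp{\kappa}^{\omega}$ by a single projectively-stationary-set construction — and only then run the general covering argument. Making the choice of the obstructing projectively stationary set $S$ precise, and verifying it is genuinely projectively stationary (the condition $S \downarrow \omega_1 \supseteq$ a club on $\qp{\omega_1}^{\omega}$) while simultaneously failing strong reflection, is the delicate core of the proof.
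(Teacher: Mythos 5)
There is a genuine gap: your proposal never arrives at the one idea that actually drives the paper's proof, and the route you sketch instead runs into a circularity that you notice but do not resolve.

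The paper does not prove $\kappa^\omega=\kappa$ first and then bootstrap to $\kappa^{\omega_1}$; it constructs a single injection of ${}^{\omega_1}\kappa$ into $\kappa$ directly, which settles both equalities at once. The mechanism is a \emph{coding} of arbitrary functions $f:\omega_1\to\kappa$ by projectively stationary sets: using Ulam's Theorem one fixes a partition $\ap{E_\alpha:\alpha<\kappa}$ of $\bp{\alpha<\kappa:\cf(\alpha)=\omega}$ into $\kappa$ disjoint stationary sets, and a partition $\ap{D_\alpha:\alpha<\omega_1}$ of $\omega_1\setminus\bp{0}$ into stationary sets with $\min D_\alpha>\alpha$; then $S_f=\bp{X\in\qp{\kappa}^\omega:\forall\alpha\ (X\cap\omega_1\in D_\alpha\Leftrightarrow\sup(X)\in E_{f(\alpha)})}$ is shown to be projectively stationary, and the point is that any continuous increasing $h_f:\omega_1\to S_f$ with $\bigcup\ran(h_f)\supseteq\omega_1$ reveals $f$ through the pattern of stationary sets $E_{f(\alpha)}$ met by the suprema $\sup(h_f(\alpha))$. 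The uniqueness claim (two reflection witnesses for $S_f$ and $S_g$ with the same $\sup(\bigcup\ran(h))$ force $f=g$) makes $f\mapsto\min\bp{\delta:\delta=\sup(\bigcup\ran(h_f))\text{ for some witness }h_f}$ an injection ${}^{\omega_1}\kappa\to\kappa$. Your candidate set $S=\bp{A\in\qp{\kappa}^\omega:A\cap\omega_1\in\omega_1}$ carries no coding information whatsoever, and the ``diagonal obstruction'' you invoke is never specified.

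Your alternative covering argument ($\qp{\kappa}^\omega\subseteq\bigcup\bp{\qp{Z}^\omega:Z\in\mathcal Z}$ with $\vp{\mathcal Z}\leq\kappa$) cannot close: even granting the covering, each $\qp{Z}^\omega$ has cardinality $\omega_1^\omega\geq 2^\omega$, and bounding $2^\omega$ by $\kappa$ is part of what the theorem is for (indeed $2^{\aleph_0}=\aleph_2$ is derived in the paper as a \emph{corollary} of this theorem). You flag this bootstrapping problem yourself but only gesture at ``proving a weaker statement first'' without exhibiting one; no such detour is available that avoids the coding construction, which is the delicate core you correctly sensed was missing but did not supply.
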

		\begin{proof}
			Let $\ap{E_\alpha: ~ \alpha < \kappa}$ be a partition of $\bp{\alpha \in \kappa: ~ \cf(\alpha) = \omega}$ into stationary sets by Ulam Theorem \ref{sUlam}. Similarly, let $\ap{D_\alpha: ~ \alpha < \omega_1}$ be a partition of $\omega_1 \setminus \bp{0}$ into stationary sets such that $\min D_\alpha > \alpha$. To accomplish this, from $\ap{B_\alpha: ~ \alpha < \omega_1}$ partition of $\omega_1$ into stationary sets define $A_\alpha = B_\alpha \setminus \alpha+1$, $A_0 = \cp{\omega_1 \setminus \bp{0}} \setminus \bigcup_{0 < \alpha < \omega_1} A_\alpha$). Given $f: \omega_1 \rightarrow \kappa$, define $S_f = \bp{X \in \qp{\kappa}^\omega: ~ \forall \alpha ~ X \cap \omega_1 \in D_\alpha \Leftrightarrow \sup(X) \in E_{f(\alpha)} }$.
			\noqed
		\end{proof}

		\begin{tlemma} \label{rClaimSf}
			$S_f$ is projectively stationary for any $f$.
		\end{tlemma}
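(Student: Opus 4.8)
The plan is to verify the two clauses in the definition of ``projectively stationary'' for $S_f$: that $S_f\downarrow\omega_1$ contains a club on $\qp{\omega_1}^\omega$, and that $S_f$ is stationary on $\qp{\kappa}^\omega$. I will write $\alpha(\delta)$ for the unique index with $\delta\in D_{\alpha(\delta)}$ (defined for $0\neq\delta<\omega_1$), noting $\alpha(\delta)<\min D_{\alpha(\delta)}\le\delta$. I will also use that one may assume $f$ injective, which is all that is needed for Theorem~\ref{rCardinalArithm} (the injective maps $\omega_1\to\kappa$ already number $\kappa^{\omega_1}$, via $f\mapsto(\alpha\mapsto\ap{f(\alpha),\alpha})$ and a bijection $\kappa\times\omega_1\cong\kappa$) and without which the statement actually fails, since then $S_f\downarrow\omega_1$ omits a stationary set of limit ordinals.

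First I would check the club clause directly. Fix a nonzero limit ordinal $\delta<\omega_1$ and put $\alpha_0=\alpha(\delta)$, $\beta_0=f(\alpha_0)$. Since $\kappa\ge\omega_2$ is regular, $[\omega_1,\kappa)$ is a club, so the stationary set $E_{\beta_0}$ contains some $\gamma>\omega_1$ with $\cf(\gamma)=\omega$; let $X$ be the union of $\delta$ with a sequence cofinal in $\gamma$ and contained in $(\omega_1,\gamma)$. Then $X\in\qp{\kappa}^\omega$, $X\cap\omega_1=\delta$ and $\sup X=\gamma$, and $X\in S_f$: for every $\alpha$ one has $\delta\in D_\alpha$ iff $\alpha=\alpha_0$, while $\gamma\in E_{f(\alpha)}$ iff $f(\alpha)=\beta_0$, iff (by injectivity) $\alpha=\alpha_0$. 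Thus $S_f\downarrow\omega_1$ contains every nonzero limit ordinal below $\omega_1$, hence a club on $\qp{\omega_1}^\omega$.

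Stationarity of $S_f$ I would deduce from the following fact, whose proof I expect to be the main obstacle: \emph{if $D\subseteq\omega_1$ and $E\subseteq\bp{\gamma<\kappa:\cf(\gamma)=\omega}$ are stationary, then $\bp{X\in\qp{\kappa}^\omega:X\cap\omega_1\in D\wedge\sup X\in E}$ is stationary.} Granting it, given a function $g\colon\kappa^{<\omega}\to\kappa$ I pick any nonzero limit ordinal $\delta_1$, set $\alpha_1=\alpha(\delta_1)$, $\beta_1=f(\alpha_1)$, and apply the fact with $D=D_{\alpha_1}$ and $E=E_{\beta_1}$ to obtain $X$ closed under $g$ with $X\cap\omega_1\in D_{\alpha_1}$ and $\sup X\in E_{\beta_1}$. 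Then $X\cap\omega_1$ is a nonzero ordinal with $\alpha(X\cap\omega_1)=\alpha_1$, so $\sup X\in E_{f(\alpha(X\cap\omega_1))}$; by injectivity of $f$ and pairwise disjointness of the $E_\alpha$ this gives $X\in S_f$, so $S_f$ meets the club determined by $g$.

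The fact I would prove not by controlling $\sup(M\cap\kappa)$ for an elementary $M\prec H(\theta)$ (which is not possible) but by working inside a closure point. Replacing $g$ by a function defining a smaller club, I may assume every $g$-closed set meets $\omega_1$ in a limit ordinal (throw into $g$ fixed enumerations of the countable ordinals and the successor map). The $g$-closure points form a club in $\kappa$ and $E\cap(\omega_1,\kappa)$ is stationary, so fix a $g$-closure point $\gamma\in E$ with $\gamma>\omega_1$ and a sequence $\ap{\gamma_n:n<\omega}$ cofinal in $\gamma$ with each $\gamma_n\in(\omega_1,\gamma)$. For $\delta<\omega_1$ let $X_\delta$ be the closure of $\delta\cup\bp{\gamma_n:n<\omega}$ under $g$; since $\gamma$ is a $g$-closure point, $X_\delta\subseteq\gamma$ is countable with $\sup X_\delta=\gamma\in E$, and $X_\delta\cap\omega_1$ is a limit ordinal $\phi(\delta)\ge\delta$. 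Closure under a function commutes with unions of increasing chains, so $\phi$ is continuous and nondecreasing and therefore has a club of fixed points; choosing $\delta$ to be simultaneously a fixed point of $\phi$ and a limit ordinal lying in $D$ (both hold for stationarily many $\delta$), the set $X:=X_\delta$ is closed under $g$, has $X\cap\omega_1=\phi(\delta)=\delta\in D$ and $\sup X=\gamma\in E$, which proves the fact and hence the lemma.
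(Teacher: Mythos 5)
Your proof is correct, and it takes a genuinely different route from the paper's. The paper proves the stronger relativized statement that $S_f\cap C_g\cap(A\uparrow\kappa)\neq\emptyset$ for every stationary $A\subseteq\omega_1$ and every club $C_g$, all in one pass: Fodor's Lemma produces a $\gamma$ with $A\cap D_\gamma$ stationary, a continuous increasing chain $\ap{M_\alpha:\alpha<\kappa}$ of elementary submodels of $H(\theta)$ of size $<\kappa$ locates a $\delta=M_\xi\cap\kappa\in E_{f(\gamma)}$ of cofinality $\omega$, and a second chain $\ap{N_\alpha:\alpha<\omega_1}$ of $g$-closures inside $M_\xi$ supplies the witness. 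You instead verify the two clauses of the definition separately: the club clause by an explicit construction showing that every nonzero countable limit ordinal lies in $S_f\downarrow\omega_1$, and stationarity via the self-contained fact that $\bp{X\in\qp{\kappa}^\omega: X\cap\omega_1\in D,\ \sup X\in E}$ is stationary for stationary $D\subseteq\omega_1$ and $E\subseteq\bp{\gamma<\kappa:\cf(\gamma)=\omega}$, which you prove using $g$-closure points of $\kappa$ and a continuity/fixed-point argument instead of elementary submodels. Your version is more elementary (no $H(\theta)$, no Fodor) and isolates a reusable fact; as written it yields exactly the paper's definition of projective stationarity rather than the relativized intersection property, but the latter is recovered by running your fact with $D=A\cap D_\gamma$ (Fodor, as in the paper), so nothing is lost. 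Most importantly, your injectivity observation is a genuine correction rather than a cosmetic one: if $f(\alpha_1)=f(\alpha_2)$ with $\alpha_1\neq\alpha_2$, the two biconditionals force any $X\in S_f$ with $X\cap\omega_1\in D_{\alpha_1}$ to satisfy $X\cap\omega_1\in D_{\alpha_1}\cap D_{\alpha_2}=\emptyset$, so $S_f\downarrow\omega_1$ misses the stationary set $D_{\alpha_1}$ and the lemma fails; correspondingly, the paper's final step ``so $N_\beta$ is in $S_f$'' silently uses injectivity of $f$ when checking the biconditional at indices $\alpha\neq\gamma$. Restricting to injective $f$, as you do, still leaves $\kappa^{\omega_1}$ many functions and so suffices for Theorem \ref{rCardinalArithm}, repairing both the statement and its application.
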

		\begin{proof}[Proof of Lemma]
			Let $A \subseteq \omega_1$ be stationary, and $C_g$ be the club corresponding to the function $g: ~ \kappa^{<\omega} \rightarrow \kappa$. We shall define an $X \in S_f \cap C_g \cap (A \uparrow \kappa)$ that testifies the projective stationarity of $S_f$.
			Let $h: ~ A \setminus \bp{0} \rightarrow \omega_1$ be defined by $h(\alpha) = \beta$ iff $\alpha \in D_\beta$. Since $\min(D_\beta) > \beta$, $h$ is a regressive function on the stationary set $A \setminus \bp{0}$. By Fodor's Lemma \ref{sFodor} let $\gamma$ be such that $f^{-1}\qp{\bp{\gamma}} = A \cap D_\gamma$ is stationary.

			Let $\ap{M_\alpha: ~ \alpha < \kappa}$ be a continuous strictly increasing sequence of elementary substructures of $H(\theta)$ (for some large $\theta$) of size less than $\kappa$, such that $g \in M_0$, $M_\alpha \in M_{\alpha+1}$, $\alpha \subset M_{\alpha+1}$. Since $M_\alpha \cap \kappa$ is an ordinal in club many $\alpha < \kappa$, by restricting to a subsequence we can assume that $M_\alpha \cap \kappa$ is an ordinal for all $\alpha < \kappa$.

			Then $C_1 = \bp{M_\alpha \cap \kappa: ~ \alpha < \kappa}$ is a club subset of $\kappa$, so there is a $\delta \in E_{f(\gamma)} \cap C_1$, hence a structure $M_\xi$ such that $M_\xi \cap \kappa = \delta \in E_{f(\gamma)}$. Since $\delta$ is in $E_{f(\gamma)}$, $\cf(\delta) = \omega$ and we can define an increasing sequence $\ap{\delta_i: ~ i < \omega}$ converging to $\delta$.

			Let $\ap{N_\alpha: ~ \alpha < \omega_1}$ be defined by letting $N_\alpha \in C_g$ be the closure under $g$ of the set $\bp{\delta_i: ~ i < \omega} \cup \alpha$. Since this set is a subset of $M_\xi$ and $g$ is in $M_\xi$ (that is closed under $g$), for all $\alpha$ the set $N_\alpha$ is a subset of $M_\xi$ hence $\sup(N_\alpha) = M_\xi \cap \kappa = \delta \in E_{f(\gamma)}$. Furthermore, the set $C_2 = \bp{\alpha < \omega_1: ~ N_\alpha \cap \omega_1 = \alpha}$ is a club: closed by continuity of the sequence, and unbounded since given $\alpha_0$ we can define $\alpha_{i+1} = \sup(N_{\alpha_i} \cap \omega_1)$ so that $\alpha_\omega = \sup_{i < \omega}\alpha_i \in C_2$.

			Thus, there exists a $\beta$ in the intersection of $C_2$ with the stationary set $A \cap D_\gamma$. The corresponding $N_\beta$ will be such that $N_\beta \cap \omega_1 = \beta \in A \cap D_\gamma$, and $N_\beta \in C_g$, $\sup(N_\beta) = \delta \in E_{f(\gamma)}$. So $N_\beta$ is in $S_f$, completing the proof of Lemma \ref{rClaimSf}.
		\end{proof}

		\begin{tclaim} \label{rClaimDeltaF}
			Given $f,g: ~ \omega_1 \rightarrow \kappa$, if there exists $h_f: \omega_1 \rightarrow S_f$, $h_g: \omega_1 \rightarrow S_g$ continuous increasing functions such that $\bigcup \ran(h_f) \supseteq \omega_1$, $\bigcup \ran(h_g) \supseteq \omega_1$ and $\sup \cp{\bigcup \ran(h_f)} = \sup \cp{\bigcup \ran(h_g)}$, then $f = g$.
		\end{tclaim}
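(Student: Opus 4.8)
The target is $f=g$, which I will establish pointwise: for each fixed $\alpha<\omega_1$ it suffices, by the pairwise disjointness of the $E_\beta$'s, to produce a single ordinal $\mu$ with $\mu\in E_{f(\alpha)}\cap E_{g(\alpha)}$. Write $Z_f:=\bigcup\ran(h_f)$, $Z_g:=\bigcup\ran(h_g)$, and let $\eta$ be the common value $\sup Z_f=\sup Z_g$; note that $\omega_1\subseteq Z_f\cap Z_g$ and that $Z_f$ and $Z_g$ are both cofinal in $\eta$.

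The plan is an elementary submodel argument. Fix a regular $\theta$ with $h_f,h_g\in H(\theta)$ and build a continuous increasing chain $\ap{M_\xi: ~ \xi<\omega_1}$ of countable $M_\xi\prec H(\theta)$ with $h_f,h_g\in M_0$; if moreover $\cf(\eta)=\omega$, I also put some cofinal $\omega$-sequence of $\eta$ into $M_0$. Set $\delta_\xi:=M_\xi\cap\omega_1$, a limit ordinal, so that $\bp{\delta_\xi: ~ \xi<\omega_1}$ is a club in $\omega_1$. The key observation is that $M_\xi$ reconstructs an initial segment of $h_f$: for $\zeta<\delta_\xi$ we have $\zeta\in M_\xi$, hence $h_f(\zeta)\in M_\xi$, hence (being countable) $h_f(\zeta)\subseteq M_\xi$; conversely any $z\in M_\xi\cap Z_f$ lies in $h_f(\zeta)$ for the least such $\zeta$, and that $\zeta$ is definable from $z$ and $h_f$, hence belongs to $M_\xi\cap\omega_1=\delta_\xi$. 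Therefore $M_\xi\cap Z_f=\bigcup_{\zeta<\delta_\xi}h_f(\zeta)=h_f(\delta_\xi)$, using continuity of $h_f$ at the limit $\delta_\xi$, and symmetrically $M_\xi\cap Z_g=h_g(\delta_\xi)$.

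From this I read off the two invariants attached to $\delta_\xi$. First, intersecting with $\omega_1$ (and using $\omega_1\subseteq Z_f$) gives $h_f(\delta_\xi)\cap\omega_1=M_\xi\cap\omega_1=\delta_\xi$, and likewise $h_g(\delta_\xi)\cap\omega_1=\delta_\xi$. Second, since $Z_f,\eta\in M_\xi$ and $Z_f$ is cofinal in $\eta$, elementarity gives $\sup h_f(\delta_\xi)=\sup(M_\xi\cap Z_f)=\sup(M_\xi\cap\eta)$, and the same computation for $Z_g$ yields $\sup h_g(\delta_\xi)=\sup(M_\xi\cap\eta)$ --- the \emph{same} ordinal, which I call $\mu_\xi$. (When $\cf(\eta)=\omega$ the cofinal sequence loaded into $M_0$ forces $\mu_\xi=\eta$; when $\cf(\eta)>\omega$ one has $\eta\notin Z_f\cup Z_g$, which is what makes $\sup(M_\xi\cap Z_f)=\sup(M_\xi\cap\eta)$, and then $\mu_\xi<\eta$.) Now fix $\alpha<\omega_1$; since $D_\alpha$ is stationary and $\bp{\delta_\xi: ~ \xi<\omega_1}$ is a club, choose $\xi$ with $\delta_\xi\in D_\alpha$. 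Then $h_f(\delta_\xi)\in S_f$ and $h_f(\delta_\xi)\cap\omega_1=\delta_\xi\in D_\alpha$, so the defining equivalence of $S_f$ forces $\mu_\xi=\sup h_f(\delta_\xi)\in E_{f(\alpha)}$; identically $\mu_\xi\in E_{g(\alpha)}$. Hence $f(\alpha)=g(\alpha)$, and as $\alpha$ was arbitrary, $f=g$.

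The step I expect to be the crux is pinning down that $\sup h_f(\delta_\xi)$ and $\sup h_g(\delta_\xi)$ are the \emph{same} ordinal: this is exactly where the hypothesis $\sup Z_f=\sup Z_g$ (as opposed to the trivial $Z_f=Z_g$) is doing the work, and it is what ties $f$ and $g$ together. The subtlety to watch for is the case $\eta\in Z_f$, i.e.\ when the supremum is attained, which forces $\cf(\eta)=\omega$; it is handled precisely by loading a cofinal $\omega$-sequence of $\eta$ into $M_0$, so that all the relevant suprema $\sup(M_\xi\cap Z_f)$ and $\sup(M_\xi\cap Z_g)$ become equal to $\eta$. Everything else --- building the chain, that $\bp{\delta_\xi}$ is a club, and the bookkeeping with $D_\alpha$ and the disjointness of the $E_\beta$'s --- is routine.
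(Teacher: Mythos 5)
Your proof is correct and ends the same way the paper's does --- produce club-many $\delta<\omega_1$ with $h_f(\delta)\cap\omega_1=h_g(\delta)\cap\omega_1=\delta$ \emph{and} $\sup h_f(\delta)=\sup h_g(\delta)$, then meet each stationary $D_\alpha$ and invoke disjointness of the $E_\beta$'s --- but you synchronize the suprema by a genuinely different mechanism. The paper works directly with the two continuous nondecreasing ordinal sequences $\delta_f^\alpha=\sup h_f(\alpha)$ and $\delta_g^\alpha=\sup h_g(\alpha)$: it argues they are not eventually constant, hence each is a club in the common limit $\delta$ of cofinality $\omega_1$, and then interleaves them by a back-and-forth to get a club of indices where they agree. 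You instead run one chain of countable $M_\xi\prec H(\theta)$ and use the identity $h_f(\delta_\xi)=M_\xi\cap Z_f$ to get $\sup h_f(\delta_\xi)=\sup(M_\xi\cap\eta)=\sup h_g(\delta_\xi)$ in one stroke; this is slightly heavier machinery but buys you a uniform treatment of the degenerate case where the sup sequences are eventually constant (equivalently, where $f$ or $g$ is constant), a case the paper's ``not eventually constant'' step quietly skips over (its argument needs $f(\xi)\neq f(\zeta)$ for the two witnessing indices). The one assertion you state without proof is that $\cf(\eta)>\omega$ implies $\eta\notin Z_f\cup Z_g$: this does need an argument, since if $\eta\in Z_f$ but $\eta\notin Z_g$ your two suprema $\sup(M_\xi\cap Z_f)=\eta$ and $\sup(M_\xi\cap Z_g)<\eta$ would disagree. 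It follows in two lines from your own computation --- if $\eta\in Z_f$ then $\eta\in M_\xi\cap Z_f=h_f(\delta_\xi)$ for large $\xi$, so $\eta=\sup h_f(\delta_\xi)\in E_{f(\alpha)}$ for the $\alpha$ with $\delta_\xi\in D_\alpha$, forcing $\cf(\eta)=\omega$ --- so write that line in; with it the proof is complete.
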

		\begin{proof}[Proof of Claim]
			Note that by Lemma \ref{rPrjStatContF} functions $h_f$, $h_g$ satisfying all but the last condition exist. Let $C_1 = \bp{\alpha < \omega_1: ~ h_f(\alpha) \cap \omega_1 = h_g(\alpha) \cap \omega_1 = \alpha }$ be a club.

			Define $\delta_f^\alpha = \sup\cp{h_f(\alpha)}$, $\delta = \sup_{\alpha < \omega_1} \delta_f^\alpha$. Given any $\alpha \in D_\xi \cap C_1$ (for some $\xi$), there exists a $\beta > \alpha$ with $\beta \in D_\zeta \cap C_1$ (for some $\zeta \neq \xi$), so by definition of $S_f$ we have that $\delta_f^\alpha \in E_{f(\xi)}$, $\delta_f^\beta \in E_{f(\zeta)}$ and $\delta_f^\alpha \neq \delta_f^\beta$ (since $E_{f(\xi)} \cap E_{f(\zeta)} = \emptyset$). Then, the sequence $\ap{\delta_f^\alpha: ~ \alpha < \omega_1}$ is continuously increasing and not eventually constant, so the limit $\delta$ has cofinality $\omega_1$ and the sequence $\ap{\delta_f^\alpha: ~ \alpha < \omega_1}$ is club on $\delta$.

			The same argument holds for $\ap{\delta_g^\alpha: ~ \alpha < \omega_1}$, $\delta = \sup_{\alpha < \omega_1} \delta_g^\alpha$ (by hypothesis) and $C_2 = \bp{\alpha < \omega_1: ~ \delta_f^\alpha = \delta_g^\alpha} \cap C_1$ is a club: closed by continuity, unbounded since given any $\alpha_0 < \omega_1$ we can define $\alpha_{2i+1} = \min \bp{\beta \in C_1: ~ \delta_f^\beta \geq \delta_g^{\alpha_{2i}}}$, and $\alpha_{2i+2} = \min \bp{\beta \in C_1: ~ \delta_g^\beta \geq \delta_f^{\alpha_{2i+1}}}$, so that $\alpha_\omega = \sup_{i < \omega} \alpha_i$ is in $C_2$.

			Suppose by contradiction that $f \neq g$, and let $\beta$ be such that $f(\beta) \neq g(\beta)$, and $\gamma \in C_2 \cap D_\beta$. Then $f(\gamma) \cap \omega_1 = \gamma \in D_\beta$, $f(\gamma) \in S_f$ implies that $\delta^\gamma_f \in E_{f(\beta)}$. The same argument for $g$ implies that $\delta^\gamma_g \in E_{g(\beta)}$, but $\delta^\gamma_f = \delta^\gamma_g$ and $E_{f(\beta)}$ is disjoint from $E_{g(\beta)}$, a contradiction.
		\end{proof}

		\begin{proof}[Proof of Theorem \ref{rCardinalArithm}]
			Define a map $\pi: ~ {}^{\omega_1} \kappa \rightarrow \kappa$ to be $\pi(f) = \delta$ for $\delta$ least such that $\delta = \sup \cp{\bigcup \ran(h_f)}$ for some continuous increasing $h_f: ~ \omega_1 \rightarrow S_f$. By Claim \ref{rClaimDeltaF}, $\pi$ is well-defined and injective so $\vp{\kappa} \geq \vp{{}^{\omega_1} \kappa}$ hence $\kappa^{\omega_1} = \kappa$.
		\end{proof}

		\begin{corollary}
			$\MM \Rightarrow 2^{\aleph_0} = 2^{\aleph_1} = \aleph_2$.
		\end{corollary}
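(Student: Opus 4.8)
The plan is to derive both equalities from Theorem~\ref{rCardinalArithm} applied at $\kappa = \omega_2$, the only additional ingredient being the classical fact that $\MM$ refutes the Continuum Hypothesis.

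First I would observe that $\MM$ implies $\SRP$ by Todorcevic's theorem, so in particular $\SRP(\omega_2)$ holds since $\omega_2 \supseteq \omega_1$. As $\omega_2$ is a regular cardinal, Theorem~\ref{rCardinalArithm} gives $\omega_2^{\omega_1} = \omega_2^{\omega} = \omega_2$. Since $2 \le \omega_2$, this yields $2^{\aleph_1} \le \omega_2^{\omega_1} = \aleph_2$, while $2^{\aleph_1} > \aleph_1$ by Cantor's theorem, so $2^{\aleph_1} = \aleph_2$; a fortiori $2^{\aleph_0} \le 2^{\aleph_1} = \aleph_2$ (this upper bound can equally be read off from $\omega_2^{\omega} = \omega_2$).

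Next I would establish $2^{\aleph_0} > \aleph_1$. Every c.c.c.~poset lies in the class of locally s.s.p.~posets, so $\MM$ implies $\FA_{\omega_1}(\PP)$ for every c.c.c.~poset $\PP$; this is precisely $\MA$. Suppose towards a contradiction that $2^{\aleph_0} = \aleph_1$ and fix an enumeration ${}^{\omega}2 = \bp{r_\alpha : \alpha < \omega_1}$. Let $\PP$ be the countable (hence c.c.c.) poset of finite partial functions $\omega \to 2$ ordered by reverse inclusion. For $n < \omega$ the set $E_n = \bp{p \in \PP : n \in \dom(p)}$ and for $\alpha < \omega_1$ the set $D_\alpha = \bp{p \in \PP : \exists n \in \dom(p)~ p(n) \ne r_\alpha(n)}$ are open dense in $\PP$, and there are $\aleph_1$ many of them; a filter $G$ meeting all of them, granted by $\MA$, gives a total function $r = \bigcup G : \omega \to 2$ with $r \ne r_\alpha$ for every $\alpha$, contradicting the enumeration. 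Hence $2^{\aleph_0} > \aleph_1$, i.e.\ $2^{\aleph_0} \ge \aleph_2$, and combining with the previous paragraph $2^{\aleph_0} = 2^{\aleph_1} = \aleph_2$.

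I do not anticipate a genuine obstacle: the substantive work is Theorem~\ref{rCardinalArithm}, already proved, and the only step here requiring an argument — that $\MM$ (via $\MA$) implies $2^{\aleph_0} > \aleph_1$ — is the standard Cohen-style diagonalization. The one point to watch is to apply Theorem~\ref{rCardinalArithm} at the regular cardinal $\omega_2$; it cannot be applied at $\omega_1$, which would be absurd.
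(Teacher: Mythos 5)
Your proof is correct and follows essentially the same route as the paper: the upper bounds $2^{\aleph_0}, 2^{\aleph_1} \le \aleph_2$ come from $\SRP(\omega_2)$ via Theorem~\ref{rCardinalArithm}, and the lower bound $2^{\aleph_0} \ge \aleph_2$ comes from the fact that $\MM$ implies $\MA_{\omega_1}$. The only difference is that you spell out the standard Cohen-poset diagonalization behind $\MA_{\omega_1} \Rightarrow 2^{\aleph_0} > \aleph_1$, which the paper simply cites as known.
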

		\begin{proof}
			Since $\MM$ implies $\MA_{\omega_1}$, we know that $2^{\aleph_0} \geq \aleph_2$. But $\MM$ also implies $\SRP(\omega_2)$, then $2^{\aleph_0} \leq \aleph_2^{\aleph_0} = \aleph_2$. Similarly, $2^{\aleph_1} \leq \aleph_2^{\aleph_1} = \aleph_2$ hence $2^{\aleph_1} = \aleph_2$.
		\end{proof}

		\begin{remark}
			The purpose of cardinal arithmetic is to determine the value of $\lambda^\kappa$. Assuming $\MM$ we can determine the result at least for $\kappa \leq \aleph_2$ with $\kappa$ regular: in this case, $\lambda^\kappa = \max(\lambda,\aleph_2)$. Unfortunately, the consequences of $\MM$ in cardinal arithmetic for regular cardinals stop there (for example, the value of $\aleph_0^{\aleph_2}$ can be changed by forcing). However, $\MM$ implies the singular cardinal hypothesis $\SCH$. Our proof actually shows that assuming SRP $\lambda^\kappa = \lambda^+ + 2^\kappa$ for all $\lambda \geq \kappa \geq \cf(\lambda)$.
		\end{remark}

		The following corollary gives us an interesting example of projectively stationary set.

		\begin{corollary}
			Let $S$ be a stationary set on $\kappa$ restricted to cofinality $\omega$. Then $E(S) = \bp{X \in \qp{\kappa}^\omega: ~ \sup(X) \in S}$ is projectively stationary.
		\end{corollary}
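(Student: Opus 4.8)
The plan is to check the two defining clauses of projective stationarity for $E(S)$ one at a time: first that $E(S) \downarrow \omega_1$ contains a club on $\qp{\omega_1}^\omega$, and then that $E(S)$ is stationary on $\qp{\kappa}^\omega$. Throughout we may assume $\omega_1 \subseteq \kappa$ (otherwise the statement is vacuous) and $\cf(\kappa) > \omega$ (otherwise, by the remarks following Lemma \ref{sClassicalK}, no stationary $S \subseteq \bp{\alpha : ~ \cf(\alpha)=\omega}$ exists at all); recall also that stationarity of $S$ makes $S$ unbounded in $\kappa$.

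For the projection clause I would show the stronger fact $E(S) \downarrow \omega_1 \supseteq \qp{\omega_1}^\omega$, which suffices since every club on $\qp{\omega_1}^\omega$ is a subset of $\qp{\omega_1}^\omega$. Given $A \in \qp{\omega_1}^\omega$, pick $\delta \in S$ with $\delta > \omega_1$ (possible as $S$ is unbounded), and, since $\cf(\delta)=\omega$, choose an increasing $\ap{\delta_n : ~ n<\omega}$ cofinal in $\delta$ with $\delta_0 > \omega_1$. Then $X := A \cup \bp{\delta_n : ~ n<\omega} \in \qp{\kappa}^\omega$ satisfies $X \cap \omega_1 = A$ and $\sup(X) = \delta \in S$, so $A \in E(S)\downarrow\omega_1$.

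For stationarity, fix an arbitrary $f : \kappa^{<\omega} \to \kappa$; the goal is an $f$-closed $X \in \qp{\kappa}^\omega$ with $\sup(X) \in S$. Let $D = \bp{\alpha<\kappa : ~ f[\alpha^{<\omega}]\subseteq\alpha}$. This is a club subset of $\kappa$ in the classical sense: closed because membership depends only on finite tuples, and unbounded by the usual iteration argument (using $\cf(\kappa)>\omega$, and regularity of $\kappa$ to keep the partial closures bounded; for singular $\kappa$ one replaces $D$ by the classical club obtained by closing $\bp{\sup Y : ~ Y\subseteq\kappa \text{ countable and } f\text{-closed}}$ under limits, which exists under $\cf(\kappa)>\omega$ alone). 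Since $S$ is stationary it meets the club $D$, so fix $\delta \in S \cap D$; as $\cf(\delta)=\omega$, choose $\ap{\delta_n : ~ n<\omega}$ increasing and cofinal in $\delta$, and let $X$ be the closure of $\bp{\delta_n : ~ n<\omega}$ under $f$. Then $X$ is countable; $X \subseteq \delta$ because $\delta \in D$ means the $f$-closure of any subset of $\delta$ stays inside $\delta$; hence $\sup(X)=\delta \in S$; and $X$ is $f$-closed by construction. Thus $X \in E(S)$ is closed under $f$.

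Putting the pieces together, $E(S)$ is a stationary subset of $\qp{\kappa}^\omega$ with $\omega_1 \subseteq \kappa$ and $E(S)\downarrow\omega_1 \supseteq \qp{\omega_1}^\omega$, which is exactly projective stationarity. I do not expect a genuine obstacle; the only points needing a little care are that $\sup(X)$ is forced to be exactly $\delta$ (the appended cofinal sequence gives $\sup(X)\ge\delta$ while $\delta\in D$ keeps the $f$-closure strictly below $\delta$), and the minor nuisance that $D$ need not be unbounded when $\kappa$ is singular, which the parenthetical variant above circumvents.
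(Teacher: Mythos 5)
Your argument is a correct and considerably more elementary proof of the statement \emph{as literally defined} in these notes: you verify the two clauses of projective stationarity separately, getting the projection clause from the direct observation that $E(S) \downarrow \omega_1 = \qp{\omega_1}^\omega$ (append to any countable $A \subseteq \omega_1$ an $\omega$-sequence cofinal in some $\delta \in S$ above $\omega_1$), and stationarity from the classical club of ordinals closed under a given $f$. For regular $\kappa > \omega_1$ both steps are sound. The paper instead runs a single elementary-submodel construction mimicking Lemma \ref{rClaimSf}: it fixes a stationary $A \subseteq \omega_1$ \emph{and} a club $C_g$ on $\qp{\kappa}^\omega$ simultaneously, and produces one set $N_\beta \in E(S) \cap C_g$ with $N_\beta \cap \omega_1 \in A$.

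The difference matters. What the paper's proof actually establishes is that $E(S) \cap \cp{A \uparrow \kappa}$ is stationary for every stationary $A \subseteq \omega_1$ --- the property extracted from projective stationarity by Lemma \ref{rPrjIntersection} and used downstream (e.g.\ in Lemma \ref{rTodo2}). Your decomposition cannot yield this: your witnesses for the projection clause are the thin sets $A \cup \bp{\delta_n : ~ n < \omega}$, which need not lie in any prescribed club on $\qp{\kappa}^\omega$, so the two clauses are witnessed by unrelated families that cannot be combined. And the literal definition genuinely is weaker than the combined property: for $T_0 \subseteq \omega_1$ stationary and costationary, the set $S_0 = \qp{\omega_1}^\omega \cup \bp{Y \in \qp{\omega_2}^\omega : ~ \omega_1 \in Y \wedge Y \cap \omega_1 \in T_0}$ is stationary and satisfies $S_0 \downarrow \omega_1 = \qp{\omega_1}^\omega$, yet $S_0 \cap \cp{(\omega_1 \setminus T_0) \uparrow \omega_2}$ misses the club $\bp{Y : ~ \omega_1 \in Y}$. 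So if $E(S)$ is to be usable as input to $\SRP$ via Lemma \ref{rPrjIntersection}, the simultaneous construction is what is needed, and your route misses it. (Two minor points: the statement tacitly requires $\kappa > \omega_1$, since for $\kappa = \omega_1$ even the projection clause fails; and your parenthetical fix for singular $\kappa$ does not quite work as written, because an increasing union of $f$-closed countable sets need not be $f$-closed --- though the paper, too, implicitly takes $\kappa$ regular.)
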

		\begin{proof}
			The proof mimics the one of Lemma \ref{rClaimSf}. Let $A$, $C_g$, $\ap{M_\alpha: ~ \alpha < \kappa}$, $C'$ be defined as in the lemma above. Since $C'$ is a club, we can find a $\delta \in S \cap C'$, hence a structure $M_\xi$ such that $M_\xi \cap \kappa = \delta \in S$ so that $\cf(\delta) = \omega$.
			Let $\ap{\delta_i: ~ i < \omega}$, $\ap{N_\alpha: ~ \alpha < \omega_1}$, $C''$ be defined as in Lemma \ref{rClaimSf}. Recall that for all $\alpha$ the set $N_\alpha$ is a subset of $M_\xi$ in $C_g$ such that $\sup(N_\alpha) = M_\xi \cap \kappa = \delta \in S$ (i.e., $N_\alpha \in E(S)$). Since $C''$ is club, let $\beta$ be in $C'' \cap S$: the corresponding $N_\beta$ is in $E(S) \cap C_g \cap \cp{A \uparrow \kappa}$.
		\end{proof}

		The last consequence of $\SRP$ that we shall examine is the following Theorem \ref{rSaturation} about the structure of $\NS_{\omega_1}$.

		\begin{definition}
			An ideal $I$ on $\kappa$ is \emph{saturated} iff $\pp(\kappa)/I$ is a $\kappa^+$-c.c. poset.
		\end{definition}

		\begin{theorem} \label{rSaturation}
			$\SRP(\omega_2) \Rightarrow \NS_{\omega_1}$ saturated.
		\end{theorem}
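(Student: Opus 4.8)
The plan is to argue by contradiction, producing a projectively stationary set whose strong reflection clashes with a large antichain. I would first record that $\SRP(\omega_2)$ gives $\omega_2^{\omega_1}=\omega_2$ (Theorem~\ref{rCardinalArithm}), hence $2^{\omega_1}=\omega_2$, so that every antichain of $\pp(\omega_1)/\NS_{\omega_1}$ has size $\le\omega_2$; thus, if $\NS_{\omega_1}$ is not saturated, I may fix a \emph{maximal} antichain $\ap{S_\alpha:\alpha<\omega_2}$, that is, a family of stationary subsets of $\omega_1$ with $S_\alpha\cap S_\beta$ non-stationary for $\alpha\neq\beta$ such that every stationary subset of $\omega_1$ meets some $S_\alpha$ in a stationary set. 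Maximality forces $\omega_1\setminus\bigcup_{\alpha<\omega_2}S_\alpha$ to be non-stationary (otherwise it could be added to the antichain), so I also fix a classical club $E\subseteq\bigcup_\alpha S_\alpha$.

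Next I would set
\[
T=\bp{\,X\in\qp{\omega_2}^\omega:\ X\cap\omega_1\in\omega_1\ \Rightarrow\ \exists\alpha\in X\ \bigl(X\cap\omega_1\in S_\alpha\bigr)\,}
\]
and check it is projectively stationary. For stationarity, given a club $C_f$ on $\qp{\omega_2}^\omega$, I would use Skolem hulls in $(H(\theta),\in,\lhd)$ (for large $\theta$ and $\lhd$ a well-order), always including $\ap{S_\alpha:\alpha<\omega_2}$ and $f$ among the generators: the set of $\gamma<\omega_1$ with $\mathrm{Sk}(\gamma\cup\bp{0})\cap\omega_1=\gamma$ is a club on $\omega_1$, so it meets the stationary set $S_0$ in some $\gamma$, and then $X:=\mathrm{Sk}(\gamma\cup\bp{0})\cap\omega_2$ is in $C_f\cap T$ (it is closed under $f$, $X\cap\omega_1=\gamma\in S_0$, and $0\in X$). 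For the projection, every countable subset of $\omega_1$ that is not an ordinal lies in $T\downarrow\omega_1$ vacuously, and every $\gamma\in E$ lies there via $X=\gamma\cup\bp{\alpha}$ with $\gamma\in S_\alpha$; comparing with the generalized club on $\qp{\omega_1}^\omega$ realizing the classical club $E$ (Lemma~\ref{sClassicalOmg1}) shows $T\downarrow\omega_1$ contains a club, so $T$ is projectively stationary.

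I would then apply $\SRP(\omega_2)$ to obtain $Z\subseteq\omega_2$ with $\omega_1\subseteq Z$ and $\vp{Z}=\omega_1$ on which $T$ strongly reflects, and (intersecting the reflecting club with $\bp{X:X\cap\omega_1\in\omega_1}$) fix a club $C'$ on $\qp{Z}^\omega$ with $C'\subseteq T$ all of whose members $X$ satisfy $X\cap\omega_1\in\omega_1$. The core step is to show $\ap{S_\alpha:\alpha\in Z}$ is predense in $\pp(\omega_1)/\NS_{\omega_1}$. Fixing a stationary $B\subseteq\omega_1$, I would build a continuous increasing chain $\ap{Z_i:i<\omega_1}$ in $C'$ with $\bigcup_iZ_i=Z$, and, using $\omega_1\subseteq Z$, at successor steps also feed the ordinal $\gamma_i:=Z_i\cap\omega_1$ into $Z_{i+1}$, which makes $i\mapsto\gamma_i$ normal with club range; then $\bp{i<\omega_1:\gamma_i\in B}$ is stationary. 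Restricting to limit $i$ in this set, for each such $i$ I pick $\alpha_i\in Z_i$ with $\gamma_i\in S_{\alpha_i}$ (possible since $Z_i\in T$ and $Z_i\cap\omega_1$ is an ordinal); the map $i\mapsto\mathrm{rk}(\alpha_i)$, with $\mathrm{rk}(\alpha)$ least such that $\alpha\in Z_{\mathrm{rk}(\alpha)}$, is regressive because $Z_i=\bigcup_{j<i}Z_j$ at limits, so by Fodor's lemma (Lemma~\ref{sFodor}) it is constant, say $j^*$, on a stationary set $W$; then all these $\alpha_i$ lie in the countable set $Z_{j^*}$, and since a stationary set is not a union of countably many non-stationary sets, some $\alpha^*\in Z_{j^*}\subseteq Z$ has $W^*:=\bp{i\in W:\alpha_i=\alpha^*}$ stationary. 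Since $\bp{\gamma_i:i\in W^*}$ is stationary (image of $W^*$ under the normal map $i\mapsto\gamma_i$) and contained in $B\cap S_{\alpha^*}$, predensity follows.

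Finally, $\bp{S_\alpha:\alpha\in Z}$ has at most $\vp{Z}=\omega_1<\omega_2$ members, so I may pick $\beta<\omega_2$ with $S_\beta\notin\bp{S_\alpha:\alpha\in Z}$; predensity then yields $\alpha\in Z$ with $S_\beta\cap S_\alpha$ stationary, contradicting the antichain property. The main obstacle I anticipate is exactly the predensity step: upgrading the bare strong reflection of $T$ on $Z$ to the assertion that $\ap{S_\alpha:\alpha\in Z}$ is itself a maximal antichain needs the normal reparametrization of the reflecting $\omega_1$-chain (so that stationarily many levels $\gamma_i$ land in $B$) together with the regressive-rank application of Fodor and the $\sigma$-completeness of $\NS_{\omega_1}$, which jointly locate one index $\alpha^*\in Z$ compatible with $B$.
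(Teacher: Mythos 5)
Your global strategy is the paper's: from a maximal antichain $\ap{S_\alpha:\alpha<\omega_2}$ (which exists by the cardinal arithmetic consequence of $\SRP(\omega_2)$) build a projectively stationary $T\subseteq\qp{\omega_2}^\omega$ recording ``some index in $X$ captures $X\cap\omega_1$'', reflect it to a $Z\supseteq\omega_1$ of size $\omega_1$, and press down to locate a single index in $Z$ that is compatible with a set indexed outside $Z$. The two sub-steps are executed differently. For the contradiction, the paper applies the generalized Fodor Lemma~\ref{sFodor} directly to the regressive map $X\mapsto\delta$ on the stationary set $S\cap\cp{A_\beta\uparrow Z}$ inside $\qp{Z}^\omega$ (for a fixed $\beta\in\omega_2\setminus Z$), obtaining in three lines a constant value $\gamma\in Z$ and a stationary trace inside $A_\gamma\cap A_\beta$; your route through predensity of $\bp{S_\alpha:\alpha\in Z}$ --- normal filtration $\ap{Z_i}$ of $Z$, classical Fodor on $i\mapsto\mathrm{rk}(\alpha_i)$, then $\sigma$-additivity of $\NS_{\omega_1}$ --- is correct and reaches the same conclusion, at the cost of reproving on $\omega_1$ what the generalized Fodor lemma gives on $\qp{Z}^\omega$.

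The one genuine (though repairable) gap is in your verification that $T\downarrow\omega_1$ contains a club. Your witness for $\gamma\in E$ is $X=\gamma\cup\bp{\alpha}$ with $\gamma\in S_\alpha$; but if $\gamma\le\alpha<\omega_1$ then $X\cap\omega_1=\gamma\cup\bp{\alpha}\neq\gamma$, so this $X$ lies in $T$ only vacuously and does not witness $\gamma\in T\downarrow\omega_1$. What you need is that for club-many $\gamma$ there is some $\alpha$ with $\gamma\in S_\alpha$ and either $\alpha<\gamma$ or $\alpha\ge\omega_1$. This is true but requires an argument: if the set of bad $\gamma$ (those with $\emptyset\neq\bp{\alpha:\gamma\in S_\alpha}\subseteq[\gamma,\omega_1)$) were stationary, maximality would give $\alpha_0$ with the intersection against $S_{\alpha_0}$ stationary, yet every $\gamma$ in that intersection satisfies $\gamma\le\alpha_0<\omega_1$, making it bounded --- a contradiction; so you may shrink $E$ to avoid the bad set. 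Alternatively, follow the paper's verification, which proves the stronger fact that for every stationary $A\subseteq\omega_1$ and every club $C_g$ there is $X\in T\cap C_g$ with $X\cap\omega_1\in A$: choose $\alpha$ with $A\cap S_\alpha$ stationary by maximality and take $X=M_\xi\cap\omega_2$ for a suitable countable $M_\xi\prec H(\omega_3)$ containing $\alpha$ with $M_\xi\cap\omega_1=\xi\in A\cap S_\alpha$. That argument handles stationarity and the projection uniformly and yields directly the form of projective stationarity (Lemma~\ref{rPrjIntersection}) that is consumed downstream.
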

		\begin{proof}
			$\SRP(\omega_2)$ implies that $\omega_2^{\omega_1} = \omega_2$ hence also $\vp{\pp(\omega_1)} = 2^{\omega_1} = \omega_2$, so that $\NS_{\omega_1}$ is necessarily $\omega_3$-cc. Assume by contradiction that $\NS_{\omega_1}$ is not saturated, then there exists a maximal antichain $\mathcal A = \ap{A_\alpha: ~ \alpha < \omega_2}$ in $\pp(\omega_1)/\NS_{\omega_1}$. Define $S = \bp{X \in \qp{\omega_2}^\omega: ~ \exists \delta \in X ~ X \cap \omega_1 \in A_\delta}$. We claim that $S$ is projectively stationary.

			Given any stationary $T \subseteq \omega_1$, and $g: ~ \omega_2^{<\omega} \rightarrow \omega_2$ with corresponding club $C_g$, we need to find an $X \in S \cap C_g$ (to prove the stationarity) such that $X \cap \omega_1 \in T$ (to prove the projective stationarity). By maximality of $\mathcal A$, let $\alpha < \omega_2$ be such that $T$ is compatible with $A_\alpha$ (i.e., $T \cap A_\alpha$ is stationary). Let $\ap{M_\beta: ~ \beta < \omega_1}$ be a continuous strictly increasing sequence of countable elementary substructures of $H(\omega_3)$ such that $\mathcal A, T, \alpha, g \in M_0$ and $\beta \in M_{\beta+1}$. Then $C = \bp{\beta < \omega_1: M_\beta \cap \omega_1 = \beta }$ is a club: closed by continuity of $\ap{M_\beta: ~ \beta < \omega_1}$, unbounded since for any $\beta_0$ in $\omega_1$ if $\beta_{i+1} = \sup(M_{\beta_i} \cap \omega_1)$, then $M_{\beta_\omega} \cap \omega_1 = \beta_\omega$ for $\beta_\omega = \sup_{i < \omega} \beta_i$. Let $\xi$ be in $T \cap A_\alpha \cap C$, then $M_\xi \in C_g$ since $g \in M_\xi$. Furthermore, $M_\xi \cap \omega_2 \in S$ since $M_\xi \cap \omega_1 = \xi \in A_\alpha \cap T$ (this proves also the projectivity) and $\alpha \in M_\xi$. This completes the proof that $S$ is projectively stationary.

			Since $S$ is projectively stationary on $\omega_2$ and $\SRP(\omega_2)$ holds, there is a $Z \supseteq \omega_1$ of size $\omega_1$ such that $S \cap \qp{Z}^\omega$ is club. Let $\beta$ be in $\omega_2 \setminus Z$, and define $T = S \cap \cp{A_\beta \uparrow Z}$ stationary set on $Z$. Let $g: ~ T \rightarrow Z$ be defined by $g(X) = \delta$ for a $\delta$ as in the definition of $S$ (i.e., such that $X \cap \omega_1 \in A_\delta$ and $\delta \in X$). The function $g$ is regressive on the stationary set $T$, then by Fodor's Lemma \ref{sFodor} there exists a fixed $\gamma \in Z$ (hence $\gamma \neq \beta$) such that $T' = g^{-1}\qp{\gamma}$ is a stationary subset of $T$. Since $T' = \bp{X \in \qp{Z}^\omega: ~ \gamma \in X ~\wedge~ X \cap \omega_1 \in A_\gamma \cap A_\beta}$, $T' \downarrow \omega_1$ is a stationary subset of $A_\gamma \cap A_\beta$, contradicting that $\mathcal A$ is an antichain.
		\end{proof}

%%%%%%%%%%%%%%%%%%%%%%%%%%%%%%%%%%%%%%%%%%%%%%%%%%%%%%%%%%%%%%%%%%%%%%%%%%%%%%%%%%%%%%%%%%%%%%%%%%%%%%%%

%%%%%%%%%%%%%%%%%%%%%%%%%%%%%%%%%%%%%%%%%%%%%%%%%%%%%%%%%%%%%%%%%%%%%%%%%%%%%%%%%%%%%%%%%%%%%%%%%%%%%%%%

\pagebreak
\section{Open Coloring Axiom} \label{sec:oca}

	This section and the following are currently under revision, and will be made available again soon.

\subsection{Formulations of open coloring principles in $\ZFC$} \label{sec:oca:zfc}
\subsection{Backgrounds on open colorings of a separable metric space} \label{sec:oca:backgrounds}
\subsection{Consistency of $\OCA_P$ under $\AD$} \label{sec:oca:ad}
\section{Applications of $\OCA$ to gaps in $\omega^\omega$} \label{sec:gaps}
\pagebreak

%\input{oca}

%%%%%%%%%%%%%%%%%%%%%%%%%%%%%%%%%%%%%%%%%%%%%%%%%%%%%%%%%%%%%%%%%%%%%%%%%%%%%%%%%%%%%%%%%%%%%%%%%%%%%%%%

%\input{gaps}

%%%%%%%%%%%%%%%%%%%%%%%%%%%%%%%%%%%%%%%%%%%%%%%%%%%%%%%%%%%%%%%%%%%%%%%%%%%%%%%%%%%%%%%%%%%%%%%%%%%%%%%%

\end{document}